\newtheorem{theorem}{Theorem}[section]
\newtheorem{lemma}[theorem]{Lemma}
\newtheorem{corollary}[theorem]{Corollary}
\newtheorem{proposition}[theorem]{Proposition}
\newtheorem{conjecture}[theorem]{Conjecture}
\newtheorem{question}[theorem]{Question}
\theoremstyle{definition}
\newtheorem{example}[theorem]{Example}
\theoremstyle{remark}
\numberwithin{equation}{section}
\def\Span{\operatorname{span}}
\def\alt{\operatorname{alt}} 
\def\dalt{\operatorname{dalt}}
\def\rank{\operatorname{rank}}
\def\width{\operatorname{width}}
\definecolor{lsupurple}{RGB}{70,29,124}
\definecolor{lsugold}{RGB}{253,208, 35}
\definecolor{Red}{RGB}{204,37,41}
\definecolor{Blue}{RGB}{57,106,177}
\definecolor{Orange}{RGB}{218,124,48}
\definecolor{Green}{RGB}{62,150,81}
\definecolor{Purple}{RGB}{107,76,154}
\definecolor{Maroon}{RGB}{146,36,40}
\definecolor{Brown}{RGB}{139,69,19}
\begin{document}

\title{On the Turaev genus of torus knots}

\author{Kaitian Jin}
\address{Department of Mathematics and Statistics\\
Vassar College\\
Poughkeepsie, NY} 
\email{kajin@vassar.edu}

\author{Adam M. Lowrance}
\address{Department of Mathematics and Statistics\\
Vassar College\\
Poughkeepsie, NY} 
\email{adlowrance@vassar.edu}

\author{Eli Polston}
\address{Department of Mathematics and Statistics\\
Vassar College\\
Poughkeepsie, NY} 
\email{elpolston@vassar.edu}

\author{Yanjie Zheng}
\address{Department of Mathematics and Statistics\\
Vassar College\\
Poughkeepsie, NY} 
\email{yazheng@vassar.edu}

\thanks{This paper is the result of a summer research project in Vassar College's Undergraduate Research Science Institute (URSI). The second author is supported by Simons Collaboration Grant for Mathematicians no. 355087.}

\subjclass{}
\date{}

\begin{abstract}
The Turaev genus and dealternating number of a link are two invariants that measure how far away a link is from alternating. We determine the Turaev genus of a torus knot with five or fewer strands either exactly or up to an error of at most one. We also determine the dealternating number of a torus knot with five or fewer strand up to an error of at most two. Additional bounds are given on the Turaev genus and dealternating number of torus links with five or fewer strands and on some infinite families of torus links on six strands.  
\end{abstract}

\maketitle

\section{Introduction}
\label{section:Intro}

The Turaev surface of a link diagram was first constructed by Turaev \cite{Turaev:Jones} to give an alternate method of proving Kauffman \cite{Kauffman:StateModels}, Murasugi \cite{Murasugi:Jones}, and Thistlethwaite's \cite{Thistlethwaite:Jones} theorem that the Jones polynomial gives a lower bound on the crossing number of a link. Specifically, if $L$ is a link with diagram $D$, crossing number $c(L)$, Turaev surface of genus $g_T(D)$, and Jones polynomial $V_L(t)$, then Turaev proved that
$$\Span V_L(t) + g_T(D) \leq c(L).$$

Dasbach, Futer, Kalfagianni, Lin, and Stoltzfus \cite{DFKLS:Jones} defined the Turaev genus $g_T(L)$ of the link $L$ to be the minimum genus of the Turaev surface of any link diagram of $D$. Turaev showed that the genus $g_T(D)$ of the Turaev surface is zero if and only if $D$ is a connected sum of alternating link diagrams, and consequently the Turaev genus of a link is zero if and only if the link is alternating. One can view Turaev genus as a filtration on links where links with large Turaev genus can be interpreted as being far away from alternating.

In this article, we compute the Turaev genus of several infinite families of torus knots on six or fewer strands. For positive, coprime integers $p$ and $q$, let $T_{p,q}$ be the $(p,q)$-torus knot. Since the Turaev genus of a link and its mirror are equal, it is enough to only consider positive torus knots. Because $T_{2,q}$ is alternating for all $q$, it follows that $g_T(T_{2,q})=0$. Abe and Kishimoto \cite{AbeKishimoto:3-braid} and Lowrance \cite{Lowrance:Twisted} proved that the Turaev genus of torus knots on three strands is given by
$$g_T(T_{3,3n+i})=n$$
for each $n\geq0$ and for $i=0, 1,$ or $2$. The following theorem gives the Turaev genus of all torus knots on four or five strands up to an error of at most one and gives the Turaev genus of an infinite family of six stranded torus knots.
\begin{theorem}
\label{theorem:TuraevExact}
For each non-negative integer $n$ and for $j=2$, $3,$ and $4$, 
$$
\begin{array}{>{\hfil$}p{5 cm}<{$\hfil} >{\hfil$}p{5 cm}<{$\hfil}}
g_T(T_{4,4n+1}) =  2n, &  g_T(T_{4,4n+3})= 2n+1,\\
g_T(T_{5,5n+1}) =  4n, & g_T(T_{6,6n+1}) =  6n,~\text{and}\\
\multicolumn{2}{c}{4n+j-2\leq g_T(T_{5,5n+j}) \leq 4n + j -1.}
\end{array}$$
\end{theorem}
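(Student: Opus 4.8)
The plan is to sandwich $g_T(T_{p,q})$ between a homological lower bound and a diagrammatic upper bound, and to observe that the two agree for the exact families but leave a gap of one for $T_{5,5n+j}$ with $j=2,3,4$. Since the Turaev genus of a link is bounded above by its dealternating number, the same diagrams will simultaneously bound $\dalt$.

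For the lower bounds I would invoke the known inequality $g_T(K)\ge w_{\mathrm{HF}}(K)-1$, where $w_{\mathrm{HF}}(K)$ is the width of the knot Floer homology $\widehat{HFK}(K)$, i.e.\ the number of nonzero diagonals in the $\delta=m-a$ grading. Every positive torus knot is an L-space knot, so \OS's theorem determines $\widehat{HFK}(T_{p,q})$ from the Alexander polynomial $\Delta_{T_{p,q}}(t)=\frac{(t^{pq}-1)(t-1)}{(t^p-1)(t^q-1)}$: each Alexander grading with nonzero coefficient supports a rank-one group, and the Maslov gradings are forced by the staircase. The first key step is to extract the $\delta$-spread of this staircase for $p=4,5,6$ as a function of the residue $q\bmod p$. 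I expect this to grow linearly in $n$, with the residue fixing the constant term, yielding lower bounds that equal the asserted Turaev genera for $T_{4,4n+1}$, $T_{4,4n+3}$, $T_{5,5n+1}$, and $T_{6,6n+1}$, and that equal $4n+j-2$ for $T_{5,5n+j}$. (The Khovanov-width bound $g_T(K)\ge w_{\mathrm{Kh}}(K)-2$ could serve as a fallback where the Floer computation is awkward.)

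For the upper bounds I would construct, for each family, a diagram $D$ realizing $g_T(D)=\tfrac12\big(2+c(D)-s_A(D)-s_B(D)\big)$ equal to the target value, where $s_A(D)$ and $s_B(D)$ count the loops in the all-$A$ and all-$B$ states. The standard closed-braid diagram of $(\sigma_1\cdots\sigma_{p-1})^q$ has Turaev genus far too large, so instead I would isotope it into a ``mostly alternating'' diagram that concentrates the non-alternating crossings into a controlled number of blocks; equivalently, I would write down an explicit sequence of crossing changes converting such a diagram to an alternating one, which bounds $\dalt(T_{p,q})$ and hence $g_T(T_{p,q})$ from above. Calibrating the local model against the three-strand values $g_T(T_{3,3n+i})=n$ should indicate how many blocks each column of the torus braid contributes, and the count $\tfrac12(2+c-s_A-s_B)$ then produces the upper bounds $2n$, $2n+1$, $4n$, $6n$, and $4n+j-1$.

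The main difficulty is the five-strand family $T_{5,5n+j}$ for $j=2,3,4$, where the knot Floer width gives only $4n+j-2$ but the best diagram I can build gives $4n+j-1$. Neither bound is obviously improvable: sharpening the lower bound would require an invariant detecting one more diagonal than $\widehat{HFK}$ (or a direct Turaev-surface obstruction), while sharpening the upper bound would require a genuinely more economical diagram than the block construction produces. I therefore expect the bulk of the work to lie in (i) a careful staircase computation confirming that the exact families really do have matching bounds, and (ii) an honest accounting of exactly where the five-strand construction loses the extra unit relative to the four- and six-strand cases.
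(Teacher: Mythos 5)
Your plan follows the paper's proof in essentially every respect: the lower bounds come from the knot Floer width inequality $\width\widehat{HFK}(K)-1\le g_T(K)$ applied to the Ozsv\'ath--Szab\'o staircase of these L-space knots (which indeed gives $2n$, $2n+1$, $4n$, $6n$, and $4n+j-2$ respectively), and the upper bounds come from explicit closed-braid diagrams whose Turaev genus is computed from $g_T(D)=\tfrac12\left(2+c(D)-s_A(D)-s_B(D)\right)$. Be aware that nearly all of the actual work lives in the step you defer: rewriting the full twists $\Delta_4$, $\Delta_5$, $\Delta_6$ as braid words with few blocks of even-indexed generators (e.g.\ $\Delta_4=113321322132$), propagating these rewritings to all residues $q \bmod p$ by induction, and then counting all-$B$ state circles, so what you have is a correct road map rather than a proof.
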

Theorem \ref{theorem:TuraevLink} gives bounds on the Turaev genus of some infinite families of torus links on six or fewer strands.

Another way to measure how far a link $L$ is from alternating is via its dealternating number $\dalt(L)$ \cite{Adams:Almost}. The dealternating number $\dalt(D)$ of a link diagram $D$ is the minimum number of crossing changes needed to transform $D$ into an alternating diagram. The {\em dealternating number} of a link $L$ is the minimum dealternating number of any diagram $D$ of $L$. Abe and Kishimoto \cite{AbeKishimoto:3-braid} show that $g_T(L)\leq \dalt(L)$ for any link $L$.  The following theorem gives bounds on the dealternating numbers of some families of torus knots on six or fewer strands.
\begin{theorem}
\label{theorem:DaltBest}
For each non-negative integer $n$ and for $j=2$, $3,$ and $4$,
$$
\begin{array}{>{\hfil$}p{7 cm}<{$\hfil} >{\hfil$}p{7 cm}<{$\hfil}}
2n \leq \dalt(T_{4,4n+1})\leq 2n+1, &
2n+1 \leq \dalt(T_{4,4n+3}) \leq  2n+2,\\
4n\leq \dalt(T_{5,5n+1}) \leq 4n+1, &
4n + j -2 \leq \dalt(T_{5,5n+j}) \leq 4n+ j,~\text{and}\\
\multicolumn{2}{c}{6n \leq \dalt(T_{6,6n+1}) \leq  6n+2.}
\end{array}$$
\end{theorem}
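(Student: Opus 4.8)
The plan is to separate the two directions. The lower bounds require no new work: combining the inequality $g_T(L)\leq\dalt(L)$ of Abe and Kishimoto with the Turaev genus values and bound recorded in Theorem \ref{theorem:TuraevExact} immediately yields each lower bound, since $g_T(T_{4,4n+1})=2n$, $g_T(T_{4,4n+3})=2n+1$, $g_T(T_{5,5n+1})=4n$, $g_T(T_{6,6n+1})=6n$, and $g_T(T_{5,5n+j})\geq 4n+j-2$. All of the substance therefore lies in the upper bounds, each of which I would establish by exhibiting a single diagram of the torus knot together with an explicit set of crossing changes that produces an alternating diagram, so that $\dalt$ is bounded above by the size of that set.

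For the upper bounds I would present $T_{p,q}$ as the closure of the positive braid $(\sigma_1\sigma_2\cdots\sigma_{p-1})^q$, a diagram with $(p-1)q$ crossings organized into $q$ identical rows, each a copy of the staircase word $\sigma_1\cdots\sigma_{p-1}$. The guiding principle is that the braid word is periodic, so I would search for a crossing-change pattern that is itself periodic in the vertical direction with period $p$. Matching the coefficients in the statement, such a pattern should use $2$ changes per period when $p=4$, $4$ changes per period when $p=5$, and $6$ changes per period when $p=6$; applied to the $\lfloor q/p\rfloor\cdot p$ rows inside the periodic region this produces the $2n$, $4n$, and $6n$ terms, while the finitely many leftover rows determined by the residue $j$ contribute the constant $+1$ or $+2$. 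Writing down one fundamental domain of changed crossings and checking that it tiles correctly is the core of the construction.

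The main obstacle will be proving that the modified diagram is genuinely alternating rather than merely close to it. I would verify this by arranging the changed crossings so that, after a planar isotopy of the closed braid, the over/under sequence along each strand strictly alternates, checking the condition locally at every crossing of a fundamental domain and then propagating it by periodicity. The most delicate point is the seam between the periodic region and the leftover rows: the extra change beyond the Turaev genus bound—the $+1$ for the $T_4$ and $T_5$ families and the $+2$ for $T_{6,6n+1}$—is forced precisely by the failure of alternation to close up across this seam, and getting the total count to agree exactly with the stated bound, case by case in $j$, is where the careful bookkeeping is concentrated. I expect the conceptual content to be modest once the pattern is identified; the work is in recording the fundamental domain honestly and confirming alternation both inside the periodic region and at its boundary.
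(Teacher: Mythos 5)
Your treatment of the lower bounds is fine and matches the paper in substance: the paper cites the knot Floer width bound of Theorem \ref{theorem:HFKWidth} directly, and your route through $g_T(L)\leq\dalt(L)$ and Theorem \ref{theorem:TuraevExact} delivers the same numbers.

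The upper-bound strategy, however, has a genuine gap: it cannot work on the standard diagram. For a \emph{fixed} connected link projection there are exactly two alternating diagrams with that projection (an alternating over/under assignment on a connected $4$-valent plane graph is determined by its value at one crossing), so the dealternating number of a given diagram is forced: it is the minimum, over those two assignments, of the number of crossings where your diagram disagrees. For the closure of $(\sigma_1\cdots\sigma_{p-1})^q$ the two alternating assignments are ``even generators negative'' and ``odd generators negative,'' so the dealternating number of that diagram is $\min(q,2q)=q$ when $p=4$ and $2q$ when $p=5,6$. For $T_{4,4n+1}$ that is $4n+1$, not $2n+1$; for $T_{5,5n+1}$ it is $10n+2$, not $4n+1$; for $T_{6,6n+1}$ it is $12n+2$, not $6n+2$. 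There is no freedom to choose a cleverer periodic pattern of changes, and planar isotopy does not alter this count. The missing idea is that one must change the \emph{diagram}, not just the crossings: the paper first rewrites $(\sigma_1\cdots\sigma_{p-1})^q$ via braid relations and cyclic permutation (Lemmas \ref{lemma:T4q}--\ref{lemma:T6q}) into closed braids whose even-indexed generators are concentrated in only $O(n)$ twist regions, and then applies Reidemeister-type modifications (the ``wrapping'' tricks of Figures \ref{figure:wrap} and \ref{figure:dalt5}) so that each such twist region, rather than each crossing in it, costs a single crossing change. Without some equivalent of these diagram modifications your count cannot reach the stated bounds.
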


We prove Theorems \ref{theorem:TuraevExact} and \ref{theorem:DaltBest} in two steps: first we compute a lower bound coming from knot Floer homology, and second we find diagrams with the indicated Turaev genus or dealternating number. The knot Floer homology $\widehat{HFK}(K)$ of a knot $K$ is a categorification of the Alexander polynomial of $K$ developed by Ozsv\'ath and Szab\'o \cite{OS:HFK} and independently by Rasmussen \cite{Rasmussen:HFK}. The knot Floer homology $\widehat{HFK}(K)$ of $K$ is a bigraded $\mathbb{Z}$-module with Alexander grading $s$ and Maslov grading $m$. It decomposes into direct summands $\widehat{HFK}(K) = \bigoplus_{s,m\in\mathbb{Z}} \widehat{HFK}_m(K,s)$, and the symmetrized Alexander polynomial $\Delta_K(t)$ of $K$ can be recovered as a filtered Euler characteristic of knot Floer homology:
$$\Delta_K(t) = \sum_{s,m\in\mathbb{Z}} (-1)^m \rank \widehat{HFK}_m(K,s) \cdot t^s.$$ Define
\begin{align*}
\delta_{\max}(K)= & \; \max\{s - m ~|~ \widehat{HFK}_m(K,s) \neq 0\}~and\\
\delta_{\min}(K) = & \; \min\{s - m ~|~ \widehat{HFK}_m(K,s) \neq 0\}.
\end{align*}
The {\em width} of the knot Floer homology is defined as 
$$\operatorname{width} \widehat{HFK}(K) = \delta_{\max}(K) - \delta_{\min}(K) + 1.$$

Lowrance \cite{Lowrance:HFK} proved that the width of the knot Floer homology of a knot gives a lower bound on its Turaev genus. Ozsv\'ath and Szab\'o \cite{OS:Lens} gave an algorithm to compute the knot Floer homology of a torus knot (or any knot with a lens or $L$-space surgery) from its Alexander polynomial.  We use the Ozsv\'ath and Szab\'o algorithm to compute the width of the knot Floer homology $\widehat{HFK}(T_{p,q})$ for many torus knots $T_{p,q}$. 
\begin{theorem}
\label{theorem:HFKWidth}
For each positive integer $p$ and non-negative integer $n$,
\begin{align*}
\width \widehat{HFK}(T_{p,pn+1}) = & \, n\left\lfloor \frac{(p-1)^2}{4}\right\rfloor + 1~\text{and}\\
~\width\widehat{HFK}(T_{p,pn-1}) = & \, n\left\lfloor \frac{(p-1)^2}{4}\right\rfloor - \left\lfloor \frac{p-1}{2} \right\rfloor + 1.
\end{align*}
Moreover for each non-negative integer $n$, 
$$\width \widehat{HFK}(T_{5,5n+2}) =  4n+1~\text{and}~
\width \widehat{HFK}(T_{5,5n+3}) =  4n+2.$$
\end{theorem}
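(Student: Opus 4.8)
The plan is to compute each width directly from the \OS{} description of the knot Floer homology of an $L$-space knot, reducing everything to a combinatorial walk attached to the numerical semigroup $S=\langle p,q\rangle$. Since torus knots admit lens space surgeries, $T_{p,q}$ is an $L$-space knot, so \OS's algorithm applies: writing the symmetrized Alexander polynomial as $\Delta_{T_{p,q}}(t)=\sum_{i=0}^{2k}(-1)^i t^{a_i}$ with $a_0>a_1>\dots>a_{2k}$, each summand $\widehat{HFK}(T_{p,q},a_i)$ has rank one, supported in a single Maslov grading $M_i$ given by $M_0=0$ together with $M_i=M_{i-1}-2(a_{i-1}-a_i)+1$ for $i$ odd and $M_i=M_{i-1}-1$ for $i$ even. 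First I would record the resulting $\delta$-gradings $\delta_i=a_i-M_i$: a short induction shows $\delta_i-\delta_{i-1}=(a_{i-1}-a_i)-1$ when $i$ is odd and $\delta_i-\delta_{i-1}=1-(a_{i-1}-a_i)$ when $i$ is even, so $(\delta_i)$ is an alternating walk whose steps are governed entirely by the gaps $a_{i-1}-a_i$ between consecutive exponents, and $\width\widehat{HFK}(T_{p,q})=\max_i\delta_i-\min_i\delta_i+1$.

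Next I would translate the exponents into semigroup data. Up to a power of $t$, $\Delta_{T_{p,q}}(t)\doteq(1-t)\sum_{s\in S}t^s$, so its nonzero coefficients are $+1$ at each \emph{run-start} of $S$ (an $s\in S$ with $s-1\notin S$) and $-1$ at each \emph{gap-start} (an $s\notin S$ with $s-1\in S$), and these alternate as required. Setting $\sigma(x)=+1$ if $x\in S$ and $\sigma(x)=-1$ otherwise, and $Q(y)=\sum_{x=0}^{y-1}\sigma(x)=\#(S\cap[0,y))-\#([0,y)\setminus S)$, the value of the walk at a run-start $x$ equals $Q(x)$ and at a gap-start $x$ equals $Q(x)-1$. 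The key observation is that run-starts are exactly the local minima of $Q$ and gap-starts its local maxima; since $Q$ changes by $\pm1$ at each step, no point is simultaneously both, and one deduces $\max_i\delta_i=\max_yQ(y)-1$ and $\min_i\delta_i=\min_yQ(y)$. This yields the clean reduction
\[
\width\widehat{HFK}(T_{p,q})=\max_{0\le y\le 2g}Q(y)-\min_{0\le y\le 2g}Q(y),
\]
where $g$ is the genus; the symmetry $x\in S\iff 2g-1-x\notin S$ of the (Gorenstein) semigroup gives $Q(y)=Q(2g-y)$, so it suffices to analyze $Q$ on $[0,g]$.

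Finally I would evaluate $\max Q$ and $\min Q$ for each family from the explicit residue description of the semigroup. For $S=\langle p,pn+1\rangle$ one has $kp+b\in S\iff k\ge bn$ for $0\le b\le p-1$, with the analogous descriptions for $\langle p,pn-1\rangle$ and for the two $p=5$ families $\langle 5,5n+2\rangle$ and $\langle 5,5n+3\rangle$. The bound $\max Q=1$ follows from the density estimate $\#(S\cap[0,y))\le (y+1)/2$ on the lower half $[0,g]$, which I would verify from the residue formula. The heart of the proof is the evaluation of $\min Q$: when $p$ is odd the minimum is attained at the center $y=g=p\cdot\tfrac{n(p-1)}2$, where summing the residue counts gives $Q(g)=-n\bigl(\tfrac{p-1}2\bigr)^2=-n\lfloor (p-1)^2/4\rfloor$, so $\width=1-Q(g)$ produces the stated formula; when $p$ is even the minimizer sits at a multiple of $p$ near $g$ rather than at the center, and a parallel computation gives $\min Q=-n\,p(p-2)/4=-n\lfloor (p-1)^2/4\rfloor$. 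The same bookkeeping applied to $\langle p,pn-1\rangle$ produces the extra shift $-\lfloor (p-1)/2\rfloor$, and the $q\equiv\pm2\pmod 5$ cases are handled by the same method with the appropriate residue offsets. I expect this last step---pinning down the exact location of $\min Q$ and carrying out the residue sums uniformly across the parity of $p$ and the four congruence families---to be the main obstacle, since the clean ``minimum at the center'' picture that drives the odd case fails when $p$ is even and must be replaced by a careful analysis of where the alternating walk bottoms out.
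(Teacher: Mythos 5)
Your proposal is correct in outline and takes a genuinely different route from the paper. The paper first expands $\Delta_{T_{p,q}}(t)$ as an explicit Laurent polynomial for each congruence family (Proposition \ref{proposition:Alex}, proved by telescoping products) and then runs the recursion of Corollary \ref{corollary:HFKWidth} term by term, with a triple-indexed bookkeeping of the $\delta$-values and a telescoping sum at the end. You instead package the same recursion into the counting function $Q(y)=\#(S\cap[0,y))-\#([0,y)\setminus S)$ of the semigroup $S=\langle p,q\rangle$, via $\Delta\doteq(1-t)\sum_{s\in S}t^s$. Your reduction $\width\widehat{HFK}(T_{p,q})=\max Q-\min Q$ is correct: the walk of $\delta$-values equals $Q$ (up to a global shift) at run-starts and $Q-1$ at gap-starts, and since $Q$ moves by unit steps its global maximum must occur at a gap-start and its global minimum at a run-start, which yields exactly the stated formula; I checked this against $T_{3,4}$ and $T_{4,5}$ and it reproduces the paper's values. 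What this buys is uniformity: the symmetry $Q(y)=Q(2g-y)$ is transparent, the explicit Laurent expansions become unnecessary, and $\max Q=1$ acquires a clean density meaning. What it does not eliminate is the per-family casework: you still must verify the density bound $\#(S\cap[0,y))\leq (y+1)/2$ on $[0,g]$ and, more substantially, prove that the claimed minimizers of $Q$ (the center $y=g$ for $p$ odd, a nearby multiple of $p$ for $p$ even, with the appropriate shifts for $q\equiv\pm 2\pmod 5$) really are global minimizers rather than merely computing $Q$ there. You flag this honestly, and your computed values at the claimed extremizers ($Q(g)=-n\lfloor(p-1)^2/4\rfloor$ for $p$ odd, $\min Q=-np(p-2)/4$ for $p$ even) agree with the theorem. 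Since the paper's own proof settles the analogous ``which $\delta$ is extremal'' question by inspection of the monotonicity of its recursion, the residual work in your plan is of comparable difficulty and not a hidden obstruction.
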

We conjecture that there is a recursive formula to compute $\width\widehat{HFK}(T_{p,q})$. The base case of the recursion is $\width \widehat{HFK}(T_{1,q})=1$ for all $q$. Also, since $T_{p,q}=T_{q,p}$, it follows that $\operatorname{width} \widehat{HFK}(T_{p,q}) = \operatorname{width} \widehat{HFK}(T_{q,p})$. These two rules together with Equation \ref{equation:WidthConjecture} would give a way to evaluate $\operatorname{width} \widehat{HFK}(T_{p,q})$ for any  pair of positive coprime integers $(p,q)$. Theorem \ref{theorem:HFKWidth} implies the conjecture is true for all pairs of positive coprime integers $(p,q)$ where $q\equiv \pm 1 \mod p$ or $p\leq 6$. A computer computation shows that the conjecture is also true when $p$ and $q$ are less than $250$.
\begin{conjecture}
\label{conjecture:Recursive}
Let $p$ and $q$ be positive coprime integers with $p<q$. Then
\begin{equation}
\label{equation:WidthConjecture}
\operatorname{width}\widehat{HFK}(T_{p,q}) - \operatorname{width}\widehat{HFK}(T_{p,q-p}) = \left\lfloor \frac{(p-1)^2}{4} \right\rfloor.
\end{equation}
\end{conjecture}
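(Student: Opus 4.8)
The plan is to translate the width of knot Floer homology into a purely combinatorial statement about the numerical semigroup $S = \langle p,q\rangle = \{ap+bq : a,b\in\mathbb{Z}_{\geq 0}\}$ and then to analyze how that statement transforms under $q\mapsto q-p$. Write $g=(p-1)(q-1)/2$ for the Seifert genus of $T_{p,q}$ and $G = \mathbb{Z}_{\geq 0}\setminus S$ for the set of gaps of $S$, so that $|G|=g$. The Alexander polynomial has the semigroup form $\Delta_{T_{p,q}}(t) = 1 + (t-1)\sum_{s\in G} t^s$, so the coefficient of $t^K$ equals $[K-1\in G]-[K\in G]$, and the nonzero terms of $\Delta_{T_{p,q}}$, hence the \OS\ staircase, are governed by the transitions of the membership walk $f(K) = \#(S\cap[0,K)) - \#(G\cap[0,K))$. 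First I would make this precise: running the \OS\ algorithm on the staircase, the $\delta = s-m$ gradings of the generators are $g+d_j$, where the $d_j$ are the signed partial sums of the run lengths of $S$ and $G$; a direct computation shows $\max_j d_j = \max_K f(K)-1$ and $\min_j d_j = \min_K f(K)$. The output of this step is the clean reformulation $\width\widehat{HFK}(T_{p,q}) = \max_{0\le K\le 2g} f(K) - \min_{0\le K\le 2g} f(K)$.

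Second I would compute $f$ explicitly using residues modulo $p$. Since $\gcd(p,q)=1$, the Apéry set of $S$ with respect to $p$ is $\{0,q,2q,\dots,(p-1)q\}$, so in the residue class of $jq$ the gaps are exactly the $\lfloor jq/p\rfloor$ smallest elements of that class. Counting gaps below $K$ class by class collapses to the closed form $f(K) = \sum_{j=0}^{p-1}\bigl|\left\lceil (K-jq)/p\right\rceil\bigr| - g$. From this I would read off that $\max_{0\le K\le 2g} f = 1$, attained at $K=1$, reflecting the front-loading of gaps in a symmetric semigroup, so that $\width\widehat{HFK}(T_{p,q}) = 1 + g - \Phi(p,q)$, where $\Phi(p,q) = \min_K \sum_{j=0}^{p-1}\bigl|\left\lceil (K-jq)/p\right\rceil\bigr|$.

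Third I would reduce the conjecture to a single statement about $\Phi$. Using $g(p,q)-g(p,q-p)=\binom{p}{2}$ together with the identity $\binom{p}{2} = \lfloor p^2/4\rfloor + \lfloor (p-1)^2/4\rfloor$, Equation \ref{equation:WidthConjecture} becomes equivalent to $\Phi(p,q)-\Phi(p,q-p) = \lfloor p^2/4\rfloor$. The two minimization problems are linked by the exact identity $\left\lceil (K-j(q-p))/p\right\rceil = \left\lceil (K-jq)/p\right\rceil + j$, so setting $a_j(K) = \lceil (K-jq)/p\rceil$ gives $\Phi(p,q) = \min_K \sum_j |a_j(K)|$ while $\Phi(p,q-p) = \min_K \sum_j |a_j(K)+j|$. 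As $K$ runs over the integers exactly one coordinate $a_j(K)$ increases by one at each step, the tuple is a set of $p$ distinct integers whose consecutive differences lie in $\{\lfloor q/p\rfloor,\lceil q/p\rceil\}$, and minimizing $\sum_j|a_j|$ amounts to centering this set about $0$.

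The hard part will be the exact evaluation in this third step: I must show that the optimal balanced configuration for spacing $\lfloor q/p\rfloor$ has $\ell^1$-cost exactly $\lfloor p^2/4\rfloor$ larger than the optimal configuration for spacing $\lfloor (q-p)/p\rfloor$, \emph{uniformly} in the residue $\beta = q \bmod p$ and for both parities of $p$, where the floor in $\lfloor p^2/4\rfloor$ behaves differently. The tension is that varying $K$ simultaneously shifts the whole tuple and permutes which consecutive differences are the larger ones, so the minimizer is not simply $p$ equally spaced points; controlling the ceiling corrections and confirming that the minimizing $K$ lies in the admissible window $[0,2g]$ is where the work concentrates. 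I expect this to be provable by exhibiting the minimizing $K$ explicitly as a function of $p$ and $q$ and comparing the two costs term by term via a rearrangement argument, which would simultaneously furnish the closed form $\Phi(p,q) = \lfloor q/p\rfloor\,\lfloor p^2/4\rfloor + \Phi(p,\,q\bmod p)$ and hence the conjecture.
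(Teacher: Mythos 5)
You should first be clear about the status of the statement you are attacking: in the paper this is a \emph{conjecture}, proved only in the cases covered by Theorem \ref{theorem:HFKWidth} and otherwise checked by computer for $p,q<250$, so there is no proof in the paper to measure your argument against. That said, your reformulation appears to be correct and is genuinely different from anything the authors do. Writing $S=\langle p,q\rangle$, $G$ for its gap set and $g=(p-1)(q-1)/2$, the identity $\width\widehat{HFK}(T_{p,q})=\max_{0\le K\le 2g}f(K)-\min_{0\le K\le 2g}f(K)$ for $f(K)=\#(S\cap[0,K))-\#(G\cap[0,K))$ reproduces the paper's values (it gives $3$ for $T_{4,5}$ and $4n+1$ for $T_{5,5n+2}$, for instance); the closed form $f(K)=\sum_{j=0}^{p-1}\bigl|\lceil(K-jq)/p\rceil\bigr|-g$ follows from the Ap\'ery-set description together with $\sum_{j=0}^{p-1}\lfloor jq/p\rfloor=g$; and the arithmetic turning \eqref{equation:WidthConjecture} into $\Phi(p,q)-\Phi(p,q-p)=\lfloor p^2/4\rfloor$ via $g(p,q)-g(p,q-p)=\binom{p}{2}=\lfloor p^2/4\rfloor+\lfloor(p-1)^2/4\rfloor$ is right.

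The genuine gap is that your third step is a restatement, not a proof: since $\width=1+g-\Phi$, the identity $\Phi(p,q)-\Phi(p,q-p)=\lfloor p^2/4\rfloor$ is \emph{equivalent} to the conjecture, and for it you offer only a plan (``I expect this to be provable by exhibiting the minimizing $K$ explicitly\dots''). The obstacles you name are exactly where the difficulty lives and none of them is dispatched: the minimizers of $\sum_j|a_j(K)|$ and of $\sum_j|a_j(K)+j|$ occur at different $K$; the pattern of which consecutive differences equal $\lceil q/p\rceil$ versus $\lfloor q/p\rfloor$ depends on $K$ and on $q\bmod p$, so the optimum is not a uniformly spaced configuration; and when $q-p<p$ the values $a_j(K)+j$ need not be distinct, so the ``center a set of $p$ distinct integers about $0$'' picture fails precisely in the base cases of the recursion. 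In addition, two load-bearing intermediate claims are asserted without argument: that $\max_{0\le K\le 2g}f=1$ (equivalently $\#(S\cap[0,K))\le(K+1)/2$ on $[0,2g]$), and that the unconstrained minimizer of $\sum_j|\lceil(K-jq)/p\rceil|$ over $K\in\mathbb{Z}$ lies in the admissible window. Both are true and provable, but until they and, above all, the exact evaluation of $\Phi(p,q)-\Phi(p,q-p)$ are established, what you have is an attractive equivalent form of Conjecture \ref{conjecture:Recursive}, not a proof of it.
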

Assuming Conjecture \ref{conjecture:Recursive}, the best-case scenario for the Turaev genus of a torus knot is that it satisfies an analogous recursive relation. In this line of thinking, we ask the following question.
\begin{question}
Is it true that
$$g_T(T_{p,q}) - g_T(T_{p,q-p}) = \left\lfloor \frac{(p-1)^2}{4} \right\rfloor$$
for all positive coprime integers $p$ and $q$ with $p<q$?
\end{question}

This paper is organized as follows. In Section \ref{section:background}, we recall some basic facts about the Turaev genus and dealternating number of a link. In Section \ref{section:lowerbound}, we discuss lower bounds on Turaev genus and dealternating number, and we prove Theorem \ref{theorem:HFKWidth}. Finally, in Section \ref{section:diagrams}, we prove Theorems \ref{theorem:TuraevExact} and \ref{theorem:DaltBest}.\medskip

\noindent{\bf Acknowledgement:} The authors are thankful for helpful conversations with Moshe Cohen, Oliver Dasbach, and John McCleary.

\section{Turaev genus and dealternating number}
\label{section:background}

In this section, we review some properties of the Turaev surface, the Turaev genus of a link, and the dealternating numbers of a link. Champanerkar and Kofman \cite{CK:Survey} wrote an excellent recent survey of Turaev genus.

Each crossing in a link diagram $D$ has an $A$-resolution and a $B$-resolution, as in Figure \ref{figure:Resolution}. The collection of curves in the plane obtained by choosing either an $A$-resolution or a $B$-resolution for each crossing in $D$ is called a {\em Kauffman state} of $D$. The {\em all-$A$ state} is the Kauffman state obtained by choosing an $A$-resolution for each crossing. Similarly, the {\em all-$B$ state} is the Kauffman state obtained by choosing a $B$-resolution for each crossing.

\begin{figure}[h]
$$\begin{tikzpicture}[>=stealth, scale=.8]
\draw (-1,-1) -- (1,1);
\draw (-1,1) -- (-.25,.25);
\draw (.25,-.25) -- (1,-1);
\draw (-3,0) node[above]{\Large{$A$}};
\draw[->,very thick] (-2,0) -- (-4,0);
\draw (3,0) node[above]{\Large{$B$}};
\draw[->,very thick] (2,0) -- (4,0);
\draw (-5,1) arc (120:240:1.1547cm);
\draw (-7,-1) arc (-60:60:1.1547cm);
\draw (5,1) arc (210:330:1.1547cm);
\draw (7,-1) arc (30:150:1.1547cm);
\end{tikzpicture}$$
\caption{The $A$-resolution and $B$-resolution of a crossing.}
\label{figure:Resolution}
\end{figure}
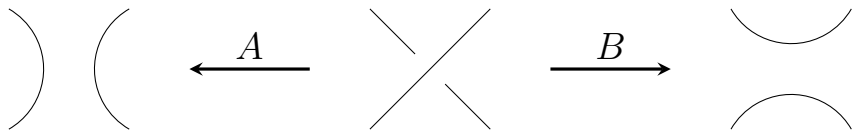

Consider the link diagram $D$ as embedded on the projection sphere $S^2$. Embed the all-$B$ state just inside the sphere and the all-$A$ state just outside the sphere. Away from neighborhoods of crossings, connect the all-$B$ and all-$A$ state with bands. In a neighborhood of a crossing, connect the all-$B$ state with the all-$A$ state with a saddle, as in Figure \ref{figure:Saddle}. The resulting surface is a cobordism between the all-$B$ and all-$A$ states of $D$. The {\em Turaev surface} $F(D)$ of $D$ is obtained by capping off each of the boundary components with disks. 
\begin{figure}[h]
$$\begin{tikzpicture}
\begin{scope}[thick]
\draw [rounded corners = 10mm] (0,0) -- (3,1.5) -- (6,0);
\draw (0,0) -- (0,1);
\draw (6,0) -- (6,1);
\draw [rounded corners = 5mm] (0,1) -- (2.5, 2.25) -- (0.5, 3.25);
\draw [rounded corners = 5mm] (6,1) -- (3.5, 2.25) -- (5.5,3.25);
\draw [rounded corners = 5mm] (0,.5) -- (3,2) -- (6,.5);
\draw [rounded corners = 7mm] (2.23, 2.3) -- (3,1.6) -- (3.77,2.3);
\draw (0.5,3.25) -- (0.5, 2.25);
\draw (5.5,3.25) -- (5.5, 2.25);
\end{scope}

\begin{pgfonlayer}{background2}
\fill [lsugold]  [rounded corners = 10 mm] (0,0) -- (3,1.5) -- (6,0) -- (6,1) -- (3,2) -- (0,1); 
\fill [lsugold] (6,0) -- (6,1) -- (3.9,2.05) -- (4,1);
\fill [lsugold] (0,0) -- (0,1) -- (2.1,2.05) -- (2,1);
\fill [lsugold] (2.23,2.28) --(3.77,2.28) -- (3.77,1.5) -- (2.23,1.5);

\fill [white, rounded corners = 7mm] (2.23,2.3) -- (3,1.6) -- (3.77,2.3);
\fill [lsugold] (2,2) -- (2.3,2.21) -- (2.2, 1.5) -- (2,1.5);
\fill [lsugold] (4,2) -- (3.7, 2.21) -- (3.8,1.5) -- (4,1.5);
\end{pgfonlayer}

\begin{pgfonlayer}{background4}
\fill [lsupurple] (.5,3.25) -- (.5,2.25) -- (3,1.25) -- (2.4,2.2);
\fill [rounded corners = 5mm, lsupurple] (0.5,3.25) -- (2.5,2.25) -- (2,2);
\fill [lsupurple] (5.5,3.25) -- (5.5,2.25) -- (3,1.25) -- (3.6,2.2);
\fill [rounded corners = 5mm, lsupurple] (5.5, 3.25) -- (3.5,2.25) -- (4,2);
\end{pgfonlayer}

\draw [thick] (0.5,2.25) -- (1.6,1.81);
\draw [thick] (5.5,2.25) -- (4.4,1.81);
\draw [thick] (0.5,2.75) -- (2.1,2.08);
\draw [thick] (5.5,2.75) -- (3.9,2.08);

\begin{pgfonlayer}{background}
\draw [black!50!white, rounded corners = 8mm, thick] (0.5, 2.25) -- (3,1.25) -- (5.5,2.25);
\draw [black!50!white, rounded corners = 7mm, thick] (2.13,2.07) -- (3,1.7)  -- (3.87,2.07);
\end{pgfonlayer}
\draw [thick, dashed, rounded corners = 2mm] (3,1.85) -- (2.8,1.6) -- (2.8,1.24);
\draw (0,0.5) node[left]{$D$};
\draw (1.5,3) node{$A$};
\draw (4.5,3) node{$A$};
\draw (3.8,.8) node{$B$};
\draw (5.3, 1.85) node{$B$};
\end{tikzpicture}$$
\caption{In a neighborhood of each crossing of $D$, the all-$B$ state is connected to the all-$A$ state with a saddle.}
\label{figure:Saddle}
\end{figure}
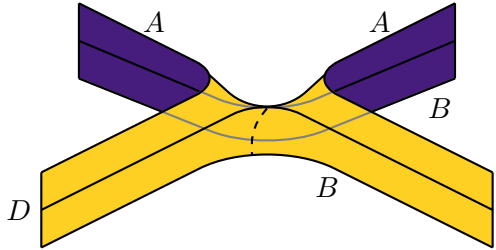 

The Turaev surface of $D$ is a Heegaard surface in $S^3$. The diagram $D$ is alternating on the surface of $F(D)$ (when viewed from one side of the surface). Let $c(D)$ be the number of crossings in $D$, and let $s_A(D)$ and $s_B(D)$ be the number of components in the all-$A$ and all-$B$ states of $D$ respectively. A link diagram can be thought of as a graph embedded in the plane where the crossings of $D$ are the vertices of the graph and the arcs of $D$ between the crossings are the edges of the graph. We say that $D$ is connected if it is connected when thought of as a graph. We denote the genus of the Turaev surface of $D$ by $g_T(D)$. If $D$ is connected, then 
\begin{equation}
\label{equation:g_T(D)}
g_T(D)= \frac{1}{2}\left( 2 + c(D) - s_A(D) - s_B(D) \right).
\end{equation}
Figure \ref{figure:Turaev45} depicts the Turaev surface of the $(4,5)$-torus knot.

\begin{figure}[h]
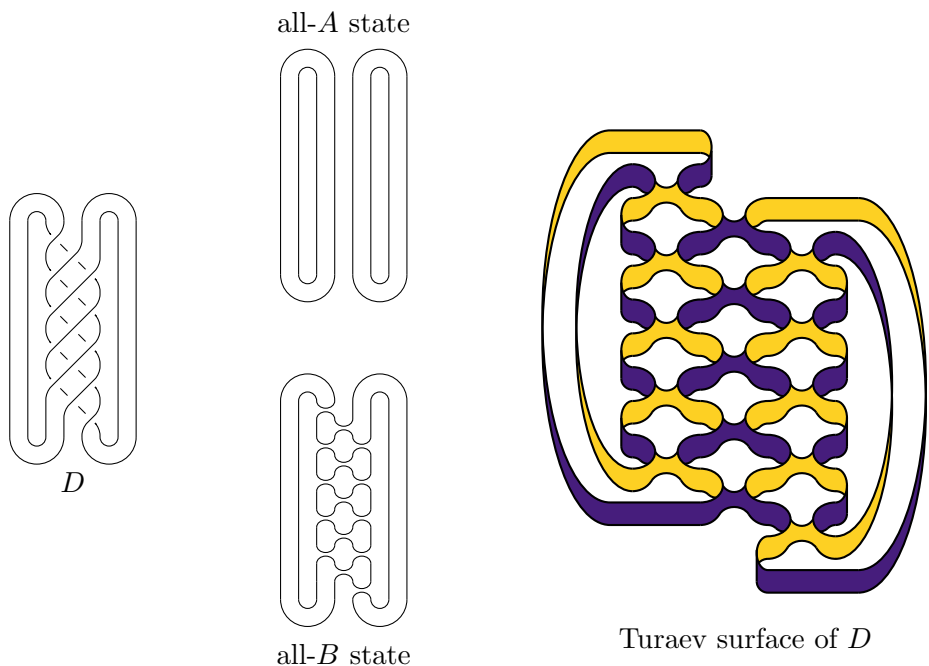

$$
$$

\caption{A diagram $D$ of the $(4,5)$ torus knot, its all-$A$ and all-$B$ states, and its Turaev surface before capping off with disks.}
\label{figure:Turaev45}
\end{figure}

The Turaev surface has genus zero if and only if $D$ is a connected sum of alternating diagrams. Armond and Lowrance \cite{ArmLow:Turaev} and independently Kim \cite{Kim:TuraevClassification} characterized link diagrams whose Turaev surface is genus one or two. Dasbach et. al. \cite{DFKLS:Jones} proved that the Jones polynomial is an evaluation of the Bollob\'as-Riordan polynomial of a graph cellularly embedded in the Turaev surface. Dasbach and Lowrance \cite{DasLow:TuraevKh} used the same graphs embedded in the Turaev surface to give a model for Khovanov homology.

The {\em Turaev genus} $g_T(L)$ of a link $L$ is defined as
$$g_T(L) = \min \{ g_T(D) ~|~ D~\text{is a diagram of}~L\}.$$
The Turaev genus of a link $L$ is zero if and only if $L$ is alternating. To compute the Turaev genus of a non-alternating link, one typically finds a diagram that is believed to be Turaev genus minimizing and then uses a computable obstruction or lower bound to prove that the diagram is in fact of minimal Turaev genus. Obstructions and lower bounds come from the Jones polynomial \cite{DasLow:TuraevJones}, Khovanov homology \cite{CKS:Khovanov}, knot Floer homology \cite{Lowrance:HFK}, or from comparing certain concordance invariants \cite{DasLow:Concordance}. The lower bound coming from knot Floer homology will be particularly important for us, and we will discuss it further in Section \ref{section:lowerbound}.

It has proven somewhat difficult to compute the Turaev genus of many infinite families of knots or links. Since a non-alternating link $L$ with a genus one Turaev surface has $g_T(L)=1$, it follows that non-alternating pretzel and Montesinos links are Turaev genus one. Abe \cite{Abe:Adequate} proved that any adequate link diagram is Turaev genus minimizing. Abe and Kishimoto \cite{AbeKishimoto:3-braid} computed the Turaev genus of three stranded torus knots, and Lowrance \cite{Lowrance:Twisted} computed the Turaev genus of many closed $3$-braids.

The {\em dealternating number} $\dalt(D)$ of a link diagram $D$ is the minimum number of crossing changes needed to transform $D$ into an alternating diagram. The {\em dealternating number} $\dalt(L)$ of the link $L$ is defined by
$$\dalt(L) = \min\{\dalt(D)~|~ D~\text{is a diagram of}~L\}.$$
A link whose dealternating number is one is called {\em almost alternating}. Adams et. al. \cite{Adams:Almost} defined the dealternating number of a link and studied almost alternating links. In particular, they proved that a prime almost alternating knot is either torus or hyperbolic. Abe \cite{Abe:Alternation} proved that the only almost alternating torus knots are $T_{3,4}$, $T_{3,5}$, and their mirrors.

The dealternating number of a link has many of the same obstructions and lower bounds as Turaev genus, coming from the Jones polynomial \cite{DasLow:TuraevJones}, Khovanov homology \cite{CK:SpanningTree}, knot Floer homology \cite{OS:SpanningTree}, and from comparing concordance invariants \cite{Abe:Alternation}. Abe and Kishimoto \cite{AbeKishimoto:3-braid} show that $g_T(L) \leq \dalt(L)$ for any link $L$. It is unknown whether there exists a link $L$ such that $g_T(L) < \dalt(L)$. Non-alternating pretzel links and Montesinos links \cite{KimLee:Pretzel, AbeKishimoto:3-braid} have dealternating number one. Moreover, the Turaev genus and dealternating number of three-stranded torus knots (and many closed $3$-braids) agree \cite{AbeKishimoto:3-braid}. 

The {\em alternation number} $\alt(D)$ of a link diagram $D$ is the fewest number of crossing changes necessary to transform $D$ into a (possibly non-alternating) diagram of an alternating link. The {\em alternation number} $\alt(L)$ of a link $L$ is defined by
$$\alt(L) = \min\{\alt(D)~|~ D~\text{is a diagram of}~L\}.$$
Equivalently, one can think of the alternation number of a link as the Gordian distance between the link and the set of alternating links. The alternation number of a link was defined by Kawauchi \cite{Kawauchi:Alternation}. An immediate consequence of the definition is that $\alt(L)\leq \dalt(L)$ for any link $L$. Feller, Pohlmann, and Zentner \cite{FPZ:Alternation} computed the alternation number of torus knots on four or fewer strands, and Baader, Feller, Lewark, and Zentner \cite{BFLZ:KhWidth} gave bounds on the alternation and dealternating number of some families of torus links on six or fewer strands. Our arguments in Section \ref{section:diagrams} resemble those in \cite{FPZ:Alternation}. See \cite{Lowrance:AltDist} for more comparisons between Turaev genus, dealternating number, alternation number, and other related invariants.

\section{Knot Floer width of $T_{p,q}$}
\label{section:lowerbound}

In this section, we compute the knot Floer width for the torus knot $T_{p,q}$ where $p=5$ and $q$ is arbitrary or where $p$ is arbitrary and $q= pn\pm 1$ for some positive integer $n$. The knot Floer width computation gives a lower bound on both Turaev genus and dealternating number.

Recall that for any knot $K$, 
\begin{align*}
\delta_{\max}(K)= & \; \max\{s - m ~|~ \widehat{HFK}_m(K,s) \neq 0\}~and\\
\delta_{\min}(K) = & \; \min\{s - m ~|~ \widehat{HFK}_m(K,s) \neq 0\}.
\end{align*}
Lowrance \cite{Lowrance:HFK} proved the following theorem.
\begin{theorem} 
\label{theorem:LowerBound}
Let $K$ be a knot of Turaev genus $g_T(K)$. Then
$$\delta_{\max}(K) - \delta_{\min}(K) \leq g_T(K).$$
\end{theorem}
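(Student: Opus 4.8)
The plan is to fix a diagram $D$ of $K$ that realizes the Turaev genus, so that $g_T(D) = g_T(K)$, and to bound the $\delta$-support of $\widehat{HFK}(K)$ directly in terms of the combinatorics of $D$. The tool I would use is the Kauffman-state model of Ozsv\'ath and Szab\'o: after marking an edge of $D$, the Kauffman states of $D$ freely generate a chain complex whose homology is $\widehat{HFK}(K)$, and each Kauffman state $x$ carries an Alexander grading $A(x)$ and a Maslov grading $M(x)$ obtained by summing local contributions over the crossings of $D$. Since $\widehat{HFK}(K)$ is the homology of this complex, it is a subquotient of the free module on the Kauffman states; hence every nonzero group $\widehat{HFK}_m(K,s)$ sits in a $\delta$-grading $s-m$ that is realized by some Kauffman state. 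Consequently it suffices to prove
$$\max_x \bigl(A(x) - M(x)\bigr) - \min_x \bigl(A(x) - M(x)\bigr) \leq g_T(D),$$
the maximum and minimum being taken over all Kauffman states $x$ of $D$. The clean point of this reduction is that I never need the differential of the complex, only the bigradings of its generators.

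The heart of the argument is the displayed inequality, and this is where I expect the main difficulty. Writing $\delta(x) = A(x) - M(x) = \sum_v \delta_v(x)$ as a sum of local contributions over the crossings $v$ of $D$, the naive estimate $\sum_v \bigl(\max_x \delta_v(x) - \min_x \delta_v(x)\bigr)$ is of the order of the crossing number $c(D)$ and is far too weak. The real content is that the local contributions cannot be varied independently: a Kauffman state is a global object (equivalently, a spanning tree of a Tait graph of $D$), so the quadrant choices at different crossings are strongly correlated. The plan is to reorganize the state sum so that the extreme values of $\delta(x)$ are governed by the all-$A$ and all-$B$ states, whose loop counts $s_A(D)$ and $s_B(D)$ enter the genus formula
$$g_T(D) = \tfrac{1}{2}\bigl(2 + c(D) - s_A(D) - s_B(D)\bigr)$$
from Equation \ref{equation:g_T(D)}. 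Concretely, I would identify the Kauffman states achieving $\delta_{\max}$ and $\delta_{\min}$ with states adapted to the all-$A$ and all-$B$ resolutions, and then bound the difference between their $\delta$-values by $1 + \tfrac{1}{2}\bigl(c(D) - s_A(D) - s_B(D)\bigr) = g_T(D)$. Controlling this telescoping difference, rather than summing crude local bounds, is the crux of the whole theorem.

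As a consistency check and a guide for the general estimate, I would first verify the base case in which $D$ is alternating: there the Turaev surface has genus zero, $D$ is $\delta$-thin, and every Kauffman state lies in a single $\delta$-grading, so the inequality reads $0 \leq 0$; this matches the known fact that alternating knots have $\widehat{HFK}$ supported on one diagonal. An alternative route to the same bound, which may organize the global correlations more transparently, is to induct on the crossings of $D$ using an unoriented skein exact sequence for $\widehat{HFK}$: resolving a crossing alters $c(D)$, $s_A(D)$, and $s_B(D)$ in a controlled way, hence changes $g_T(D)$ predictably, and one tracks how the $\delta$-support can spread under the connecting maps of the triangle. Either way, once the diagram-level inequality is established, choosing $D$ to minimize $g_T(D)$ yields $\delta_{\max}(K) - \delta_{\min}(K) \leq g_T(D) = g_T(K)$, as desired.
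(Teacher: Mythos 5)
First, a point of order: the paper does not prove this theorem; it is quoted from \cite{Lowrance:HFK}, so the comparison is with the argument there rather than with anything in this paper. Your opening reduction is correct and is in fact the same first step as in the cited proof: in the Ozsv\'ath--Szab\'o Kauffman-state model, $\widehat{HFK}(K)$ is the homology of a complex freely generated by the Kauffman states of a decorated diagram $D$, each carrying combinatorially defined gradings $A(x)$ and $M(x)$; since homology is a subquotient, the $\delta$-support of $\widehat{HFK}(K)$ is contained in $\{A(x)-M(x)\}$, and the theorem reduces to the diagrammatic inequality $\max_x\delta(x)-\min_x\delta(x)\le g_T(D)$. That reduction is genuinely content-free in the good sense (no differential needed), and you are right that it is the correct frame.

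The gap is that the diagrammatic inequality --- which is the entire content of the theorem --- is never proved; it is only announced as a plan. Two specific things are missing. First, you conflate two different notions of ``Kauffman state'': the generators of the $\widehat{HFK}$ complex are Kauffman's Alexander-polynomial states (bijections between crossings and adjacent regions, equivalently spanning trees of a Tait graph), whereas the all-$A$ and all-$B$ states entering $g_T(D)=\frac{1}{2}(2+c(D)-s_A(D)-s_B(D))$ are bracket-polynomial resolutions; there is no state of the first kind that is ``the all-$A$ state,'' so the proposed identification of the $\delta$-extremal generators with the all-$A$ and all-$B$ resolutions is not meaningful as stated. Second, the actual bridge between the two --- a closed formula for $\delta(T)$ of a spanning tree $T$ of the signed Tait graph in terms of the numbers of positive and negative edges in $T$ and its complement, together with a combinatorial lemma bounding the extremal values of those edge counts over all spanning trees by quantities that compute $s_A(D)$ and $s_B(D)$ --- is exactly the lemma one must prove, and nothing in the proposal establishes it. The naive local bound you correctly reject, the alternating base case ($0\le 0$), and the suggestion of an unoriented skein triangle (which for $\widehat{HFK}$ brings in links, $\mathbb{Z}/2$ coefficients, and delicate $\delta$-shifts, and is not how the cited proof goes) do not fill this in. As it stands the proposal is a faithful description of where the difficulty lies, but not a proof of the theorem.
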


Let $K$ be a knot such that there is an integral surgery on $K$ that yields a lens space. Ozsv\'ath and Szab\'o \cite{OS:Lens} prove that the nonzero coefficients of the Alexander polynomial are all $\pm 1$ and that the knot Floer homology of $K$ can be determined from the Alexander polynomial of $K$, as follows.
\begin{theorem}
Let $K$ be a knot in $S^3$ such that there is an integral surgery on $K$ yielding a lens space. Then there exists a sequence of integers $s_{-k}< \cdots < s_k$ satisfying $s_{\ell} = -s_{-\ell}$ such that the Alexander polynomial of $K$ can be expressed as
$$\Delta_K(t) = (-1)^k + \sum_{\ell=1}^k (-1)^{k-\ell}(t^{s_\ell} + t^{s_{-\ell}}).$$
For $-k\leq \ell \leq k$ define
$$m_\ell = \begin{cases}
0 & \text{if}~\ell=k,\\
m_{\ell+1} -2 (s_{\ell+1} - s_\ell)+1 & \text{if}~k-\ell~\text{is odd},\\
m_{\ell+1}-1&\text{if}~k-\ell > 0~\text{is even.}
\end{cases}$$
Then $\widehat{HFK}_{m_\ell}(K,s_\ell) \cong \mathbb{Z}$ for each $\ell$, and $\widehat{HFK}_m(K,s) = 0$ otherwise.
\end{theorem}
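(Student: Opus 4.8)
The plan is to leverage the fact that a lens space is an $L$-space together with the large-surgery formula for knot Floer homology, which expresses the Heegaard Floer homology of large integral surgeries on $K$ in terms of subquotient complexes of the full knot complex $CFK^\infty(K)$. Since a lens space is an $L$-space, $K$ admits an $L$-space surgery, and after possibly replacing $K$ by its mirror we may take it to be a positive surgery; then for all sufficiently large $N$ the manifold $S^3_N(K)$ is an $L$-space as well. The large-surgery formula identifies $\widehat{HF}(S^3_N(K), [s])$ with the homology of the subquotient complex $\widehat{A}_s = C\{\max(i, j-s)=0\}$ of $CFK^\infty(K)$ for $|s|$ in the appropriate range. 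Because an $L$-space has $\widehat{HF}$ of rank one in every $\operatorname{Spin}^c$ structure, this forces $\rank H_*(\widehat{A}_s) = 1$ for every integer $s$. This rank-one constraint, imposed simultaneously for all $s$, is the engine of the whole argument.

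Next I would show that the family of rank-one conditions forces $CFK^\infty(K)$ to be filtered chain homotopy equivalent to a \emph{staircase} complex: a complex with a single generator in each of a finite collection of Alexander gradings whose differentials alternate between purely horizontal arrows (dropping the $i$-filtration) and purely vertical arrows (dropping the $j$-filtration). The Alexander gradings of the surviving generators are exactly the exponents $s_{-k} < \cdots < s_k$ appearing in the statement. The conjugation symmetry of the knot complex, namely $\widehat{HFK}_m(K,s) \cong \widehat{HFK}_{m-2s}(K,-s)$, forces $s_\ell = -s_{-\ell}$. Taking the graded Euler characteristic of $\widehat{HFK}(K)$, each staircase generator contributes a single monomial $\pm t^{s_\ell}$, and the alternation of the arrows makes the signs alternate; this simultaneously proves that every nonzero Alexander coefficient is $\pm 1$ and yields the claimed expression $\Delta_K(t) = (-1)^k + \sum_{\ell=1}^k (-1)^{k-\ell}(t^{s_\ell}+t^{s_{-\ell}})$.

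Finally I would extract the Maslov gradings from the geometry of the staircase. Normalize so that the top generator, sitting in Alexander grading $s_k$, has Maslov grading $m_k = 0$; this is pinned down by the fact that the vertical homology computes $\widehat{HF}(S^3) \cong \mathbb{Z}$, supported in Maslov grading zero. A vertical arrow of length one between consecutive generators lowers the Maslov grading by $1$, giving $m_\ell = m_{\ell+1}-1$ when $k-\ell$ is even, while a horizontal arrow of length $s_{\ell+1}-s_\ell$ lowers the Maslov grading by $2(s_{\ell+1}-s_\ell)-1$, giving $m_\ell = m_{\ell+1}-2(s_{\ell+1}-s_\ell)+1$ when $k-\ell$ is odd. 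These are exactly the nontrivial cases of the recursion, and placing a copy of $\mathbb{Z}$ at each $(s_\ell, m_\ell)$ with all other groups vanishing is the asserted form of $\widehat{HFK}(K)$.

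The hard part will be the structural lemma of the second paragraph: deducing from ``$\rank H_*(\widehat{A}_s) = 1$ for all $s$'' that the complex is a staircase. The subtlety is that this is a statement about all the subquotient complexes at once, and one must track how the ranks of the vertical homologies at each filtration level interact as $s$ varies, ruling out any cancelling differentials or extra generators that would preserve the rank-one homology of each $\widehat{A}_s$ while spoiling the staircase shape. Controlling this interplay between the two filtrations---rather than the bookkeeping of gradings, which is routine once the shape is known---is where the real work lies.
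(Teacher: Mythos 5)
This statement is not proved in the paper at all: it is quoted verbatim as a result of Ozsv\'ath and Szab\'o (it is a reformulation of Theorem~1.2 of \cite{OS:Lens}), and the paper's ``proof'' is simply the citation. So there is no in-paper argument to compare against; what you have written is an attempt to reconstruct the proof from the cited reference. Your outline does correctly identify the architecture of that proof: large integral surgery on an $L$-space knot is an $L$-space, the large-surgery formula identifies $\widehat{HF}(S^3_N(K),[s])$ with $H_*(\widehat{A}_s)$, and the simultaneous rank-one conditions constrain the knot complex; the symmetry $\widehat{HFK}_m(K,s)\cong\widehat{HFK}_{m-2s}(K,-s)$ gives $s_\ell=-s_{-\ell}$; and your grading bookkeeping reproduces the stated recursion (the factor $2(s_{\ell+1}-s_\ell)$ in the odd case coming from the $U$-translation needed to read a horizontal arrow off at $i=0$, the drop of exactly $1$ in the even case coming from a purely vertical arrow). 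One small slip: the vertical arrows of the staircase do not have length one in general (for $T_{4,5}$ one of them has length $3$), but since a vertical differential drops the Maslov grading by exactly $1$ regardless of its length, your conclusion $m_\ell=m_{\ell+1}-1$ survives the error.

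The genuine gap is the one you flag yourself: the deduction that $\rank H_*(\widehat{A}_s)=1$ for all $s$ forces the staircase form (equivalently, that $\widehat{HFK}(K,s)$ has rank at most one in each Alexander grading with alternating signs and the prescribed $\delta$-gradings) is the entire mathematical content of the theorem, and your proposal defers it rather than proving it. Announcing ``one must rule out cancelling differentials or extra generators'' is a statement of the difficulty, not an argument; nothing you have written would detect, for example, a summand of $CFK^\infty$ that is acyclic in every $\widehat{A}_s$ but contributes extra generators to $\widehat{HFK}$. In \cite{OS:Lens} this step is carried out by a careful induction on $s$ using the short exact sequences relating $\widehat{A}_s$ to the subquotients $C\{i=0,\,j\ge s\}$ and $C\{i=0,\,j<s\}$, comparing the ranks and gradings of the induced maps on homology as $s$ decreases from $\deg\Delta_K$; the rank-one hypothesis at each stage pins down $\widehat{HFK}(K,s)$ and its Maslov grading directly, without first passing through a filtered chain homotopy classification of the full complex. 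Until that inductive comparison (or some substitute for it) is actually supplied, the proposal is an accurate table of contents for the proof rather than a proof.
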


In order to compute the lower bound for $g_T(K)$, we only need the quantities $\delta_\ell = s_\ell - m_\ell$ rather than the pairs $(m_\ell,s_\ell)$. Moreover, since $\widehat{HFK}_m(K,s) \cong \widehat{HFK}_{m-2s}(K,-s)$, it follows that we only need to consider $\delta_\ell$ for $\ell=0,1,\dots, k$. These observations lead to the following corollary.
\begin{corollary}
\label{corollary:HFKWidth}
Let $K$ be a knot in $S^3$ such that there is an integral surgery on $K$ yielding a lens space. Suppose that the Alexander polynomial of $K$ is given by 
$$\Delta_K(t) = (-1)^k + \sum_{\ell=1}^k (-1)^{k-\ell}(t^{s_\ell} + t^{s_{-\ell}}).$$
For $\ell=0,1,\dots, k$, define
$$\delta_\ell = \begin{cases}
s_k & \text{if}~\ell=k,\\
\delta_{\ell+1} + (s_{\ell+1} - s_{\ell}) -1& \text{if}~k-\ell~\text{is odd},\\
\delta_{\ell+1} - (s_{\ell+1} - s_{\ell}) +1 &\text{if}~k-\ell > 0~\text{is even.}
\end{cases}$$
Then 
$$\delta_{\max}(K) = \max\{\delta_\ell~|~0\leq \ell \leq k\}~\text{and}~\delta_{\min}(K) = \min\{\delta_\ell~|~0\leq \ell \leq k\}.$$
\end{corollary}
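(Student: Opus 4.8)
The plan is to prove the corollary in two stages: first establish the recursion for the quantities $\delta_\ell = s_\ell - m_\ell$ directly from the recursion defining the Maslov gradings $m_\ell$ in the preceding theorem, and then account for the reduction of the index range from $-k \leq \ell \leq k$ down to $0 \leq \ell \leq k$ using the symmetry of knot Floer homology.

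For the first stage, I would simply substitute $m_\ell = s_\ell - \delta_\ell$ into each branch of the recursion for $m_\ell$ and solve for $\delta_\ell$. In the base case $\ell = k$ we have $m_k = 0$, so $\delta_k = s_k - m_k = s_k$, matching the claimed formula. When $k-\ell$ is odd, the relation $m_\ell = m_{\ell+1} - 2(s_{\ell+1}-s_\ell) + 1$ gives
$$\delta_\ell = s_\ell - m_\ell = (s_{\ell+1} - m_{\ell+1}) + (s_{\ell+1}-s_\ell) - 1 = \delta_{\ell+1} + (s_{\ell+1}-s_\ell) - 1.$$
When $k-\ell > 0$ is even, the relation $m_\ell = m_{\ell+1} - 1$ gives
$$\delta_\ell = s_\ell - m_\ell = (s_{\ell+1} - m_{\ell+1}) - (s_{\ell+1}-s_\ell) + 1 = \delta_{\ell+1} - (s_{\ell+1}-s_\ell) + 1.$$
Each of these is a one-line algebraic rearrangement, so this stage is routine.

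For the second stage, the theorem tells us that the only nonzero summands of $\widehat{HFK}(K)$ occur at the bigradings $(m_\ell, s_\ell)$ for $-k \leq \ell \leq k$, so by definition $\delta_{\max}(K)$ and $\delta_{\min}(K)$ are the maximum and minimum of $\{\delta_\ell \mid -k \leq \ell \leq k\}$. To discard the negative indices I would invoke the symmetry $\widehat{HFK}_m(K,s) \cong \widehat{HFK}_{m-2s}(K,-s)$. Since $s_{-\ell} = -s_\ell$ and the unique nonzero group at Alexander grading $s_{-\ell}$ sits in Maslov grading $m_{-\ell}$, this symmetry forces $m_{-\ell} = m_\ell - 2s_\ell$, and therefore
$$\delta_{-\ell} = s_{-\ell} - m_{-\ell} = -s_\ell - (m_\ell - 2s_\ell) = s_\ell - m_\ell = \delta_\ell.$$
Hence $\{\delta_\ell \mid -k \leq \ell \leq k\} = \{\delta_\ell \mid 0 \leq \ell \leq k\}$ as sets of values, and the maxima and minima over the two index sets agree, yielding the stated formulas.

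I do not expect a genuine obstacle: the entire content is the algebraic verification of the recursion together with the symmetry observation $\delta_{-\ell} = \delta_\ell$. The only point requiring care is bookkeeping, namely applying the symmetry relation with the correct Maslov shift $-2s_\ell$ so that the $\delta$-values genuinely pair up under $\ell \mapsto -\ell$, which is precisely what legitimizes restricting attention to the nonnegative indices.
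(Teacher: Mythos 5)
Your proposal is correct and follows exactly the route the paper takes: the paper derives the corollary from the two observations stated immediately before it, namely that only the differences $\delta_\ell = s_\ell - m_\ell$ matter (which turns the $m_\ell$-recursion into the stated $\delta_\ell$-recursion by the same one-line substitutions you perform) and that the symmetry $\widehat{HFK}_m(K,s)\cong\widehat{HFK}_{m-2s}(K,-s)$ gives $\delta_{-\ell}=\delta_\ell$ and so permits restricting to $0\leq\ell\leq k$. Your write-up simply supplies the routine algebra that the paper leaves implicit.
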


Since $pq\pm 1$ surgery on the torus knot $T_{p,q}$ is a lens space \cite{Moser:Surgery}, Corollary \ref{corollary:HFKWidth} and Theorem \ref{theorem:LowerBound} can be used to give a lower bound on the Turaev genus of $T_{p,q}$. The symmetrized Alexander polynomial of the $(p,q)$ torus knot is
\begin{equation}
\label{equation:TorusAlex}
\Delta_{T_{p,q}}(t) =t^{-(p-1)(q-1)/2} \frac{(t^{pq}-1)(t-1)}{(t^p-1)(t^q-1)}.
\end{equation}

\begin{example} The Alexander polynomial of $T_{4,5}$ is 
$$\Delta_{T_{4,5}}(t)=t^{-6}-t^{-5}+t^{-2}-1+t^2-t^5+t^6,$$
and its knot Floer homology is given in Table \ref{table:HFK}. We have $\delta_{\max}(T_{4,5})=6$ and $\delta_{\min}(T_{4,5})=4$, and thus the Turaev genus of $T_{4,5}$ must be at least two.
\begin{table}[h]
\begin{tabular}{ | r || c | c | c | c | c | c | c | c | c | c | c | c | c |}
\multicolumn{14}{c}{ $\widehat{HFK}(T_{4,5})$}\\
\hline
$s \backslash m$ & -12 & -11 & -10 & -9 & - 8 & -7 & - 6 & -5  & -4 & -3 & -2 & -1 & 0\\
\hline \hline
6 & & & & & & & & & & & & & \cellcolor{lsugold}$\mathbb{Z}$\\
\hline
5 & & & & & & & & & & & & \cellcolor{lsugold}$\mathbb{Z}$ & \cellcolor{lsugold}\\
\hline
4 & & & & & & & & & & & \cellcolor{lsugold} & \cellcolor{lsugold} & \cellcolor{lsugold}\\
\hline
3 & & & & & & & & & &\cellcolor{lsugold} &\cellcolor{lsugold} & \cellcolor{lsugold} & \\
\hline
2 & & & & & & & & & \cellcolor{lsugold} & \cellcolor{lsugold} &\cellcolor{lsugold}$\mathbb{Z}$ &  & \\
\hline
1 & & & & & & & & \cellcolor{lsugold} & \cellcolor{lsugold} & \cellcolor{lsugold} & & & \\
\hline
0 & & & & & & & \cellcolor{lsugold} & \cellcolor{lsugold}$\mathbb{Z}$ & \cellcolor{lsugold} & & & &\\
\hline
-1 & & & & & &\cellcolor{lsugold} &\cellcolor{lsugold} & \cellcolor{lsugold}& & & &  &\\
\hline
-2 & & & & & \cellcolor{lsugold} &\cellcolor{lsugold} & \cellcolor{lsugold}$\mathbb{Z}$& & & & &  & \\
\hline
-3 & & & & \cellcolor{lsugold} & \cellcolor{lsugold} & \cellcolor{lsugold} & & & & & & & \\
\hline
-4 & & & \cellcolor{lsugold} & \cellcolor{lsugold} &\cellcolor{lsugold} & & & & & & & &\\
\hline
-5 & &\cellcolor{lsugold}$\mathbb{Z}$ &\cellcolor{lsugold} & \cellcolor{lsugold} & & & & & & & & & \\
\hline
-6 & \cellcolor{lsugold}$\mathbb{Z}$ & \cellcolor{lsugold} & \cellcolor{lsugold} & & & & & & & & & & \\
\hline
\end{tabular}
\vspace{5pt}
\caption{The knot Floer homology of $T_{4,5}$.}
\label{table:HFK}
\end{table}
\end{example}

In order to apply Corollary \ref{corollary:HFKWidth} to a knot $K$, one must express the Alexander polynomial of $K$ as a Laurent polynomial. However, Equation \ref{equation:TorusAlex} expresses the Alexander polynomial of $T_{p,q}$ as a rational function. The following proposition gives Laurent polynomial formulas for $\Delta_{T_{p,q}}(t)$ for certain values of $p$ and $q$.

\begin{proposition}
\label{proposition:Alex}
If $p=2k+1$ is odd, then
{
\setlength{\belowdisplayskip}{0pt}%
\setlength{\abovedisplayskip}{0pt}%
\begin{align*}
\Delta_{T_{p,pn+1}}(t)= & \; 1 + \sum_{\epsilon\in\{\pm1\}} \sum_{i=1}^k \sum_{j=0}^{n-1} t^{\epsilon(p[(k-i+1)n-j]-i)}(t^{\epsilon i}-1),~\text{and}\\
\Delta_{T_{p,pn-1}}(t) = & \; 1+ \sum_{\epsilon\in\{\pm 1\}} \sum_{i=1}^k \sum_{j=0}^{n-1} t^{\epsilon p[(k-i+1)n-j-1]}(t^{\epsilon i}-1).\\
\intertext{If $p=2k$ is even, then}
\Delta_{T_{p,pn+1}}(t) = & \; (-1)^{n+1}  \sum_{\epsilon\in\{\pm 1\}} \sum_{i=1}^{k-1} \sum_{j=0}^{n-1} t^{\epsilon(p[(k-i+1)n-j]-kn-i)} (t^{\epsilon i}-1)\\
& \; + \sum_{\epsilon\in\{\pm 1\}} \sum_{i=1}^{n-1} (-1)^{n-i-1} t^{\epsilon i k}, \text{and}\\
\Delta_{T_{p,pn-1}}(t) = & \; (-1)^{n+1}  \sum_{\epsilon\in\{\pm 1\}} \sum_{i=1}^{k-1} \sum_{j=0}^{n-1} t^{\epsilon(p[(k-i)n-j-1]+kn)}(t^{\epsilon i}-1)\\
& \; + \sum_{\epsilon\in\{\pm 1\}} \sum_{i=1}^{n-1} (-1)^{n-i-1}  t^{\epsilon i k}.\\
\intertext{For the remaining torus knots on five strands, we have}
\Delta_{T_{5,5n+2}}(t) = &\; 1 +\sum_{\epsilon\in\{\pm 1\}} \sum_{j=0}^n t^{\epsilon(10n-5j+1)}(t^\epsilon -1)\\
& \; + \sum_{\epsilon\in\{\pm 1\}}\sum_{j=0}^{n-1} t^{\epsilon(5n-5j-4)}(t^{4\epsilon} - t^{3\epsilon} + t^{\epsilon} - 1),~\text{and}\\
\Delta_{T_{5,5n+3}}(t) = &\; 1 + \sum_{\epsilon\in\{\pm 1\}} \sum_{j=0}^{n-1} t^{\epsilon(10n-5j+3)}(t^\epsilon -1) \\
& \; +  \sum_{\epsilon\in\{\pm 1\}} \sum_{j=0}^{n} t^{\epsilon(5n-5j)} (t^{4\epsilon} - t^{3\epsilon} + t^{\epsilon} - 1).\\
\end{align*}
}
\end{proposition}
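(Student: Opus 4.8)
The plan is to start from the rational-function expression for $\Delta_{T_{p,q}}(t)$ in Equation \ref{equation:TorusAlex} and expand it into a Laurent polynomial by writing each factor as a (possibly infinite) geometric series whose product necessarily truncates. Concretely, I would first record the finite expansion
$$\frac{t^{pq}-1}{t^q-1} = \sum_{a=0}^{p-1} t^{aq}$$
together with the power series
$$\frac{t-1}{t^p-1} = \frac{1-t}{1-t^p} = \sum_{m\geq 0}\left(t^{pm}-t^{pm+1}\right),$$
so that, writing $P_{p,q}(t) := t^{(p-1)(q-1)/2}\,\Delta_{T_{p,q}}(t)$, one has
$$P_{p,q}(t) = \left(\sum_{a=0}^{p-1} t^{aq}\right)\left(\sum_{m\geq 0}\left(t^{pm}-t^{pm+1}\right)\right).$$
Because $\Delta_{T_{p,q}}$ is a genuine Laurent polynomial, this product collapses to finitely many terms, and the heart of the argument is to identify exactly which terms survive.

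The key step is to compute the coefficient of each power of $t$ in $P_{p,q}(t)$ by sorting contributions according to their residue modulo $p$. Substituting $q=pn+1$ gives exponents $p(m+an)+a$ and $p(m+an)+a+1$ for the positive and negative terms respectively, and since $0\leq a\leq p-1$ these residues are tightly controlled. For a fixed residue $r$, the positive contribution comes from the unique value $a=r$ and the negative from $a=r-1$ (or $a=p-1$ when $r=0$), and comparing the two geometric thresholds shows that every surviving coefficient is $\pm 1$. This is exactly the statement that $s$ contributes precisely when it crosses the boundary of the numerical semigroup $\langle p,q\rangle$; for $q\equiv 1\pmod p$ one has the clean membership criterion $s\in\langle p,q\rangle \iff s\geq (s\bmod p)\,q$, which pins down the exponents of the nonzero terms. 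The case $q=pn-1$ is handled identically after replacing $q\equiv 1$ by $q\equiv -1\pmod p$, and the two remaining five-strand families $q=5n+2$ and $q=5n+3$ require the analogous but slightly more involved analysis, since now $q\equiv \pm 2\pmod 5$ and one must invert $2$ modulo $5$ to locate the representatives, which is what produces the factor $t^{4\epsilon}-t^{3\epsilon}+t^{\epsilon}-1$.

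The final step is to reassemble the surviving $\pm 1$ terms into the stated closed forms. Using the palindromic symmetry $\Delta_{T_{p,q}}(t)=\Delta_{T_{p,q}}(t^{-1})$, I would split the exponents into the two halves indexed by $\epsilon\in\{\pm1\}$, pair each positive term in the zero residue class with its neighboring negative term to produce the factors $(t^{\epsilon i}-1)$, and absorb the symmetrizing power $t^{-(p-1)(q-1)/2}$ so that the central term becomes the constant $1$. Reindexing the resulting pairs by $(i,j)$ then yields the displayed double sums.

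I expect the main obstacle to be the bookkeeping in this reassembly, and in particular its dependence on the parity of $p$. For odd $p=2k+1$ the positive and negative terms pair up cleanly into blocks $(t^{\epsilon i}-1)$ covering residue classes $1,\dots,k$ and $k+1,\dots,p-1$, leaving no leftover class. For even $p=2k$ the middle residue class $k=p/2$ is not reached by the natural paired grouping, and the grouping overlaps the zero class as well; tracking the resulting internal cancellations is precisely what forces the extra alternating-sign sum $\sum_{\epsilon}\sum_{i=1}^{n-1}(-1)^{n-i-1}t^{\epsilon i k}$ in the even formulas. Verifying that these groupings reproduce the correct exponents and signs in every residue class—rather than the computation of any single coefficient—will be the most delicate and error-prone part, and I would sanity-check it against the explicit low-complexity cases, such as the $T_{4,5}$ computation recorded in Table \ref{table:HFK}, before trusting the general identity.
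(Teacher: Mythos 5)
Your argument is correct in outline but takes a genuinely different route from the paper. The paper works \emph{backwards}: it takes the claimed Laurent polynomial, multiplies it by $(t^p-1)(t^q-1)$, and checks by telescoping the inner sums that the product collapses to $t^{-(p-1)(q-1)/2}(t^{pq}-1)(t-1)$; no analysis of which monomials survive is ever needed, only the verification of one algebraic identity per case (and the paper carries this out only for $T_{p,pn+1}$ with $p$ odd, declaring the other five cases analogous). You work \emph{forwards} from Equation \ref{equation:TorusAlex}, expanding $\frac{t^{pq}-1}{t^q-1}=\sum_{a=0}^{p-1}t^{aq}$ against the geometric series for $\frac{t-1}{t^p-1}$ and identifying the surviving coefficients as boundary crossings of the numerical semigroup $\langle p,q\rangle$, sorted by residue mod $p$; your membership criterion $s\in\langle p,q\rangle \iff s\geq (s\bmod p)\,q$ for $q\equiv 1\pmod p$ is correct, and the residue-by-residue count does reproduce the right number of $\pm1$ coefficients. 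Your route is more conceptual and uniform across all six cases, at the cost of a reindexing step that you correctly flag as the delicate part but do not carry out: pairing each $-1$ in residue class $r$ with a $+1$ in residue class $0$ to form the factors $(t^{\epsilon i}-1)$, and handling the class $k=p/2$ when $p$ is even, is where essentially all the content of the proposition lives, so as written this is a plan rather than a completed proof. The paper's route buys a short, fully mechanical verification; yours buys an explanation of where the exponents come from.

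One concrete reason to finish your version: the sanity check you propose against Table \ref{table:HFK} is not optional here. For $T_{4,5}$ (so $p=2k=4$, $n=1$, $k=2$) the displayed even-$p$ formula for $\Delta_{T_{p,pn+1}}$ yields only $t^{6}-t^{5}+t^{-6}-t^{-5}$, missing the central terms $t^{2}-1+t^{-2}$; more generally the even-$p$ formulas as printed evaluate to $0$ or $2$ at $t=1$, which no knot Alexander polynomial does. Your forward derivation would produce the correct polynomial and therefore cannot terminate in the even-$p$ formulas exactly as displayed, so before doing the bookkeeping you should first repair the target (the missing terms appear to be the $i=k$ block of the double sum together with a constant), something the paper's verify-by-multiplication proof, executed only in the odd case, never has occasion to notice.
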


\begin{proof}
We only prove the result for $T_{p,pn+1}$ where $p=2k+1$ is odd. The other results follow from an analogous strategy. Since the formula for the symmetrized Alexander polynomial of $T_{p,q}$ is 
$$\Delta_{T_{p,q}}(t) = t^{-(p-1)(q-1)/2}\frac{(t^{pq}-1)(t-1)}{(t^p-1)(t^q-1)},$$
our approach is to show that $ t^{-(p-1)(q-1)/2}(t^{pq}-1)(t-1)$ is the product of $(t^p-1)(t^q-1)$ and our formula. 

Let $p=2k+1$ and $q=pn+1$. Then
\begin{align}
& \;(t^q-1)(t^p-1)  \sum_{i=1}^k \sum_{j=0}^{n-1} t^{p[(k-i+1)n-j]-i}(t^ {i}-1) \\
= &\; t^{pkn+pn} \sum_{i=1}^k (t^q-1) t^{-ipn-i} \sum_{j=0}^{n-1} (t^p-1)(t^{i-jp} - t^{-jp})\\
= & \; t^{pkn+pn} \sum_{i=1}^k (t^q-1) t^{-ipn-i} \sum_{j=0}^{n-1} (t^{i+p-jp} - t^{p-jp} - t^{i-jp} + t^{-jp})\\
= &\; t^{pkn+pn}  \sum_{i=1}^k (t^q-1) t^{-ipn-i}(t^q-1)(t^{p+i} - t^p - t^{p-n+i} + t^{p-n})\\
= &\; t^{pkn+pn} \sum_{i=1}^k(t^{p+q-ipn} -t^{p+q}-t^{p+1+i} +t^{p+1} - t^{p+i} + t^{p-pn-ipn}-t^{p-pn-iq})\\
= & \; t^{pkn+p+q}-t^{pkn+p+q-1} -t^{p+q} + t^{pn+p-k} + t^p - t^{p-k}.
\end{align}
In the above equation, (3.5) and (3.7) follow from their respective previous steps because the sums are telescoping.
One can similarly prove that
\begin{align*}
&\; (t^p-1)(t^q-1)\sum_{i=1}^k \sum_{j=0}^{n-1} t^{-p[(k-i+1)n-j]+i}(t^{-i}-1)\\
= & \; -t^{pn+p-k} +t^q + t^{p-k} -1 - t^{1-pkn}+t^{-pkn}.
\end{align*}
Therefore
\begin{align*}
&\; (t^p-1)(t^q-1)\left(1 + \sum_{\epsilon\in\{\pm1\}} \sum_{i=1}^k \sum_{j=0}^{n-1} t^{\epsilon(p[(k-i+1)n-j]-i)}(t^{\epsilon i}-1)\right)\\
=&\; (t^p-1)(t^q-1) +(t^{pkn+p+q}-t^{pkn+p+q-1} -t^{p+q} + t^{pn+p-k} + t^p - t^{p-k}) \\
&\; + ( -t^{pn+p-k} +t^q + t^{p-k} -1 - t^{1-pkn}+t^{-pkn})\\
= &\; t^{kpn+p+q} - t^{kpn +p + q -1} - t^{1-kpn} + t^{-kpn}\\
= & \; t^{-(p-1)(q-1)/2}(t^{pq}-1)(t-1),
\end{align*}
proving our result.
\end{proof}

We prove Theorem \ref{theorem:HFKWidth} using Corollary \ref{corollary:HFKWidth} and Proposition \ref{proposition:Alex}.
\begin{proof}[Proof of Theorem \ref{theorem:HFKWidth}]

The proof of this theorem breaks up into cases, one for each of the formulas appearing in Proposition \ref{proposition:Alex}. In each case, the symmetry of $\widehat{HFK}(K)$ ensures that it is enough to consider the constant term and the $\epsilon=1$ terms in the sum. The strategy for each case is to apply Corollary \ref{corollary:HFKWidth} to Proposition \ref{proposition:Alex}. For notational convenience, we will reindex the $\delta_{\ell}$-terms in Corollary \ref{corollary:HFKWidth} to more closely match the indices of the sums in Proposition \ref{proposition:Alex}.

Suppose $p=2k+1$. The terms with positive degree in $\Delta_{T_{p,pn+1}}(t)$ come in pairs. We reindex the terms in Corollary \ref{corollary:HFKWidth} as $\delta_{i,j,1}$ and $\delta_{i,j,2}$ to correspond to the $t^i$ and $1$ terms respectively in the sum with indices $i$ and $j$. The initial $\delta$-term of Corollary \ref{corollary:HFKWidth} is $\delta_{1,0,1}=pkn$. Each $\delta_{i,j,2}$ term occurs an odd number of times after the first step in the iteration. Hence $\delta_{i,j,2} = \delta_{i,j,1} + i - 1$. Each $\delta_{i,j,1}$ term occurs an even number of times after the first step. For each $i$, we have  $\delta_{i,j+1,1} = \delta_{i,j,1} - p + 2i$ if $0< j < n-1$ and  $\delta_{i+1,0,1}=\delta_{i,n-1,1} - p+2i$.  Finally, if $\delta_{0}$ is the term corresponding to the constant term in the Alexander polynomial, then $\delta_{0}=\delta_{k,n-1,1}-1$. Thus $\delta_{\max}(T_{p,pn+1})=\delta_{1,0,1}$, $\delta_{\min}(T_{p,pn+1})=\delta_{0}$ and
\begin{align*}
\delta_{\max}(T_{p,pn+1}) - \delta_{\min}(T_{p,pn+1}) = & \; n \sum_{i=1}^k p-2i\\
= & \; n \sum_{i=1}^k 2i-1\\
= & \; nk^2 \\
= & n \left\lfloor \frac{(p-1)^2}{4}\right\rfloor.
\end{align*}

The terms with positive degree in $\Delta_{T_{p,pn-1}}$ come in pairs. We reindex the terms in Corollary \ref{corollary:HFKWidth} as $\delta_{i,j,1}$ and $\delta_{i,j,2}$ to correspond to the $t^i$ and $1$ terms respectively in the sum with indices $i$ and $j$. The initial $\delta$-term is $\delta_{1,0,1} = pkn - 2k$. Each $\delta_{i,j,2}$ term occurs an odd number of times after the first step in the iteration. Hence $\delta_{i,j,2} = \delta_{i,j,1} + i - 1$. Each $\delta_{i,j,1}$ term occurs an even number of steps after the initial term. For each $i$, we have $\delta_{i,j+1,1} = \delta_{i,j,1} - p + 2i$ if $0 < j < n-1$ and $\delta_{i+1,0,1} = \delta_{i,n-1,1} - p +2i + 1$. The term corresponding to the constant term in the Alexander polynomial is $\delta_{k,n-1,2}$. Thus $\delta_{\max}(T_{p,pn-1})=\delta_{1,0,1}$, $\delta_{\min}(T_{p,pn-1})=\delta_{k,n-1,1}$ and
\begin{align*}
\delta_{\max}(T_{p,pn-1}) - \delta_{\min}(T_{p,pn-1}) = & \; (n-1) \sum_{i=1}^k (p-2i) + \sum_{i=1}^k (p-2i -1) \\
= & \; (n-1) \sum_{i=1}^k (2i-1) + 2 \sum_{i=1}^{k-1} i\\
= & \; (n-1)k^2 + (k-1)k\\
= & \; nk^2 - k\\
= & \; n\left\lfloor \frac{(p-1)^2}{4}\right\rfloor - \left\lfloor \frac{p-1}{2} \right\rfloor.
\end{align*}

Let $p=2k$. There are two sums  in $\Delta_{T_{p,pn+1}}(t)$. In the first sum, the terms come in pairs. The terms will be labeled by $\delta_{i,j,1}$ and $\delta_{i,j,2}$ to correspond to the $t^i$ and $1$ terms respectively. In the second sum, we label each term as $\tilde{\delta}_i$ to correspond with the term $t^{ik}$. The constant term is again labeled $\delta_0$.  The initial $\delta$-term is $\delta_{1,0,1}=(p-1)kn$. Each $\delta_{i,j,2}$ term occurs an odd number of steps after the initial term. Hence $\delta_{i,j,2}=\delta_{i,j,1} + i - 1$. Each $\delta_{i,j,1}$ term occurs an even number of steps after the initial term. For each $i$, we have $\delta_{i,j+1,1} = \delta_{i,j,1} - p +2i$ if $0<j<n-1$ and $\delta_{i+1,0,1}=\delta_{i,n-1,1} - p+2i$. The term in the second sum with highest degree is the term indexed by $n-1$. We have $\tilde{\delta}_{n-1} = \delta_{k-1,n-1,2} - k = \delta_{k-1,n-1,1} +( k-1)-1 -k  = \delta_{k-1,n-1,1}-2.$ After that point we have
$\tilde{\delta}_{n-1-i} = \tilde{\delta}_{n-1}$ if $i$ is even and $\tilde{\delta}_{n-1-i} = \tilde{\delta}_{n-1}+k-1$ if $i$ is odd. Finally $\delta_0 = \tilde{\delta}_{n-1} + k -1$. Thus $\delta_{\max}(T_{p,pn+1}) = \delta_{1,0,1}$, $\delta_{\min}(T_{p,pn+1})=\tilde{\delta}_{n-1}$, and
\begin{align*}
\delta_{\max}(T_{p,pn+1}) - \delta_{\min}(T_{p,pn+1})= & \; n \sum_{i=1}^{k-1} p-2i\\
= &\; 2n \sum_{i=1}^{k-1}i\\
= & \; nk(k+1)\\
= &\;  n \left\lfloor \frac{(p-1)^2}{4}\right\rfloor.
 \end{align*}
 
There are two sums in $\Delta_{T_{p,pn-1}}(t)$. In the first sum, the terms come in pairs, and will be labeled by $\delta_{i,j,1}$ and $\delta_{i,j,2}$ to correspond to the $t^i$ and $1$ terms respectively. In the second sum, we label the term corresponding to $t^{ik}$ by $\tilde{\delta}_i$. The term corresponding to the constant term in the Alexander polynomial is again labeled $\delta_0$. The initial $\delta$-term is $\delta_{1,0,1} = (p-1)(kn-1)$. Each $\delta_{i,j,2}$ term occurs an odd number of steps after the initial term. Hence $\delta_{i,j,2}=\delta_{i,j,+1} - i + 1$. Each $\delta_{i,j,1}$ term occurs an even number of steps after the initial term. For each $i$, we have $\delta_{i,j+1,1} = \delta_{i,j,1} - p + 2i$ if $0 < j < n-1$ and $\delta_{i+1,0,1} = \delta_{i,n-1,1} - p +2i + 1$. The term in the second sum with greatest degree is $\tilde{\delta}_{n-1}$. We have $\tilde{\delta}_{n-1} = \delta_{k-1,n-1,2} - k + 1 = (\delta_{k-1,n-1,1} +k-2)  - k +1=\delta_{k-1,n-1,1} - 1$. After that point we have
$\tilde{\delta}_{n-1-i} = \tilde{\delta}_{n-1}$ if $i$ is even and $\tilde{\delta}_{n-1-i} = \tilde{\delta}_{n-1}+k-1$ if $i$ is odd.
Finally $\delta_0 =  \tilde{\delta}_{n-1} + k -1$. Thus $\delta_{\max}(T_{p,pn-1}) = \delta_{1,0,1}$, $\delta_{\min}(T_{p,pn-1})=\tilde{\delta}_{n-1}$, and
\begin{align*}
\delta_{\max}(T_{p,pn-1}) - \delta_{\min}(T_{p,pn-1}) = & \;(n-1) \sum_{i=1}^k (p-2i) + \sum_{i=1}^k (p-2i -1)\\
= &\; (n-1) \sum_{i=1}^{k-1} 2i + \sum_{i=1}^{k-1} 2i-1\\
= &\; (n-1)(k-1)k + (k-1)^2\\
= & \; nk^2 - nk - k +1\\
= &  \; n\left\lfloor \frac{(p-1)^2}{4}\right\rfloor - \left\lfloor \frac{p-1}{2} \right\rfloor.
\end{align*}
This proves the theorem for $T_{p,pn\pm 1}$. A similar analysis yields the knot Floer width for the knots $T_{5,5n+2}$ and $T_{5,5n+3}$.
\end{proof}

\section{Turaev genus minimizing diagrams of $T_{p,q}$}
\label{section:diagrams}

In this section, we construct diagrams of $T_{p,q}$ whose Turaev genus and dealternating number are equal to or just slightly larger than the lower bounds given by Theorems \ref{theorem:HFKWidth} and \ref{theorem:LowerBound}. We also prove Theorems \ref{theorem:TuraevExact} and \ref{theorem:DaltBest}.

Many of the diagrams in this section are in closed braid form. Let $B_p$ denote the braid group on $p$ strands, and let $\sigma_i$ denote the braid where strand $i+1$ passes over strand $i$ as in Figure \ref{figure:braidgen}. The braid group $B_p$ is generated by $\sigma_i$ for $i=1,\dots, p-1$. The relations in $B_p$ come in two formats:
\begin{align}
\label{equation:rel1}
\sigma_i \sigma_j = &\;  \sigma_j \sigma_i~\text{if $|i-j|>1$, and}\\
\label{equation:rel2}
\sigma_i \sigma_{i+1}\sigma_i =&\; \sigma_{i+1}\sigma_i\sigma_{i+1}~\text{if $i=1,\dots,p-2$.}
\end{align}

Since the braid words are rather long, we adopt the following convention. The braid generator $\sigma_i$ will be denoted by the integer $i$. A product of braid generators $\sigma_{i_1}\sigma_{i_2}\cdots \sigma_{i_k}$ is represented by the string $i_1 i_2 \cdots i_k$. A power of braid generators $(\sigma_{i_1}\sigma_{i_2}\cdots \sigma_{i_k})^j$ is represented by $(i_1 i_2 \cdots i_k)^j$. We only use positive braid generators in this article.

\begin{figure}[h]
$$\begin{tikzpicture}[thick]
\draw (0,1) -- (0,0);
\draw (1,1) -- (1,0);
\draw (4,1) -- (3,0);
\draw (3,1) -- (3.3,.7);
\draw (3.7,.3) -- (4,0);
\draw (6,1) -- (6,0);
\draw (7,1) -- (7,0);
\draw (5,.5) node{$\cdots$};
\draw (2,.5) node{$\cdots$};
\draw (0,1) node[above]{1};
\draw (1,1) node[above]{2};
\draw (3,1) node[above]{i};
\draw (4,1) node[above]{i+1};
\draw (6,1) node[above]{p-1};
\draw (7,1) node[above]{p};
\end{tikzpicture}$$
\caption{The braid generator $\sigma_i$ in $B_p$, the braid group on $p$ strands .}
\label{figure:braidgen}
\end{figure}
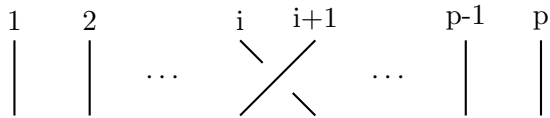

The torus knot $T_{p,q}$ is the closure of the braid $(1 2 3 \cdots p-1)^q$. A full twist $\Delta_p\in B_p$ is the braid $(1 2 3 \cdots p-1)^p$. The full twist $\Delta_p$ is in the center of the braid group $B_p$. If $q=pn + r$ where $0\leq r < p$, then $T_{p,q}$ is the closure of the braid $\Delta_p^n (1 2 3 \cdots p-1)^r$. Alternate forms of the full twists $\Delta_4$, $\Delta_5$, and $\Delta_6$ are the building blocks for the proofs of Theorems \ref{theorem:TuraevExact} and \ref{theorem:DaltBest}. Our strategy is to first find a diagram of the closure of $\Delta_p$ whose Turaev surface has small genus, then to do the same for the closure of $(\Delta_p)^n$, and finally for the closure of $\Delta_p^n (1 2 3 \cdots p-1)^r$. 

For any link diagram $D$, the inequality $g_T(D)\leq \dalt(D)$ holds. So a strategy that minimizes $\dalt(D)$ will give diagrams of small Turaev genus as well. Since all of our braids are positive, changing the even indexed crossings ($\sigma_2$ in $T_{4,q}$ and $\sigma_2$ and $\sigma_4$ in $T_{5,q}$ and $T_{6,q}$) will result in an alternating diagram.  Work from \cite{Lowrance:Twisted} implies that replacing any crossing with a positive power of that crossing does not change the genus of the Turaev surface. This suggests our strategy to minimize the Turaev genus of the closure of $\Delta_p$ should be to find a diagram with the fewest number of groups of even indexed crossings.

 Figures \ref{figure:4-twist}, \ref{figure:5-twist}, and \ref{figure:6-twist} show that
 \begin{align}
 \Delta_4 = & \; 113321322132,\\
 \Delta_5 = & \; (2311234311)^2,~\text{and}\\
 \Delta_6 = & 2\zeta234\eta43,\\
 \intertext{where}
 \zeta = &\; 133545332334513\in B_6~\text{and}\\
\eta =&\;  315531213\in B_6.
 \end{align}
 In Figures \ref{figure:4-twist}, \ref{figure:5-twist}, and \ref{figure:6-twist}, the shaded regions indicate the portions of the diagram that will be changed to obtain the subsequent diagram.
 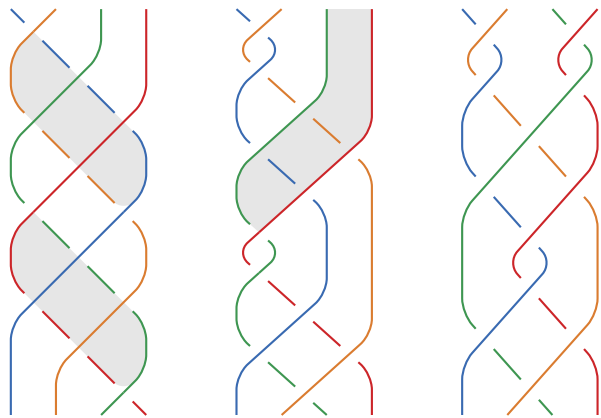
\begin{figure}[h]
 $$\begin{tikzpicture}[scale = .6]
 \begin{scope}[rounded corners = 2mm, thick]
 	\draw[Red] (3,0) -- (2.7,.3);
 	\draw[ Red] (2.3,.7) -- (1.7,1.3);
	 \draw[ Red] (1.3,1.7) -- (.7,2.3);
	 \draw[ Red] (.3,2.7) -- (0,3) -- (0,4) -- (.5,4.5);
	 \draw[Red](.5,4.5) -- (3,7) -- (3,9);
	 
	 \draw[Green] (2,0) -- (2.5,.5);
	 \draw[Green] (2.5,.5) -- (3,1) -- (3,2) -- (2.7,2.3);
	 \draw[ Green] (2.3,2.7) -- (1.7,3.3);
	 \draw[Green] (1.3,3.7) -- (.7,4.3);
	 \draw[Green] (.3,4.7) -- (0,5) -- (0,6) -- (2,8) -- (2,9);
	 
	 \draw[Orange] (1,0) -- (1,1) -- (3,3) -- (3,4) -- (2.7,4.3);
	 \draw[Orange] (2.3,4.7) -- (1.7,5.3);
	 \draw[Orange] (1.3,5.7) -- (.7,6.3);
	 \draw[Orange] (.3,6.7) -- (0,7) -- (0,8) -- (0.5,8.5);
	 \draw[Orange] (0.5,8.5) -- (1,9);
	 
	 \draw[Blue] (0,0) -- (0,2) -- (2.5,4.5);
	 \draw[Blue] (2.5,4.5) -- (3,5) -- (3,6) -- (2.7,6.3);
	 \draw[Blue] (2.3,6.7) -- (1.7,7.3);
	 \draw[Blue] (1.3,7.7) -- (.7,8.3);
	 \draw[Blue] (.3,8.7) -- (0,9);
	 
	 \begin{pgfonlayer}{background}
	 \fill[black!10!white] (.5,8.5) -- (0,8) -- (0,7) -- (2.5,4.5) -- (3,5) -- (3,6) -- (.5,8.5);
	  \fill[black!10!white, yshift=-4cm] (.5,8.5) -- (0,8) -- (0,7) -- (2.5,4.5) -- (3,5) -- (3,6) -- (.5,8.5);

	 \end{pgfonlayer}

 \end{scope}
 
\begin{scope}[rounded corners = 2mm, thick, xshift  = 5cm, yscale = .9]

	\draw[Red] (3,0) -- (3,1) -- (2.7,1.3);
	\draw[Red] (2.3,1.7) -- (1.7,2.3);
	\draw[Red] (1.3,2.7) -- (.7,3.3);
	\draw[Red] (.3,3.7) -- (0,4) -- (0.5,4.5);
	\draw[Red] (0.5,4.5) -- (3,7) -- (3,10);
	
	\draw[Green] (2,0) -- (1.7,.3);
	\draw[Green] (1.3,.7) -- (.7,1.3);
	\draw[Green] (.3,1.7) -- (0,2) -- (0,3) --(0.5,3.5);
	\draw[Green] (.5,3.5)-- (1,4) -- (.7,4.3);
	\draw[Green] (.3,4.7) -- (0,5) --(0,6) --  (2,8) -- (2,10);
	
	\draw[Orange] (1,10) -- (0,9) -- (.3,8.7);
	\draw[Orange] (.7,8.3) -- (1.3,7.7);
	\draw[Orange] (1.7,7.3) -- (2.3,6.7);
	\draw[Orange] (2.7,6.3) -- (3,6) -- (3,2) -- (1,0);
	
	\draw[Blue] (0,10) -- (.3,9.7);
	\draw[Blue] (.7,9.3) -- (1,9) -- (0,8) -- (0,7) -- (0.3,6.7);
	\draw[Blue] (0,0) -- (0,1) -- (2,3) -- (2,5) -- (1.7,5.3);
	\draw[Blue] (1.3,5.7) -- (.7,6.3);
	
	\begin{pgfonlayer}{background}
	\fill[black!10!white] (.5,4.5) -- (0,5) -- (0,6) -- (2,8) -- (2,10) -- (3,10) -- (3,7) -- (.5,4.5);
	\fill[black!10!white, rounded corners = 0mm] (2,10) -- (3,10) -- (3,9.5) -- (2,9.5);
	\end{pgfonlayer}
  
\end{scope}
\begin{scope}[rounded corners = 2mm, xshift = 10cm, thick, yscale=9/8]
	
	\draw[Red] (3,0) -- (3,1) -- (2.7,1.3);
	\draw[Red] (2.3,1.7) -- (1.7,2.3);
	\draw[Red] (1.3,2.7) -- (1,3) --(3,5) -- (3,6) -- (2.7,6.3);
	\draw[Red] (2.3,6.7) -- (2,7) -- (3,8);
	
	\draw[Blue] (0,0) -- (0,1) -- (2,3) -- (1.7,3.3);
	\draw[Blue] (1.3,3.7) -- (.7,4.3);
	\draw[Blue] (.3,4.7) -- (0,5) -- (0,6) -- (1,7) -- (.7,7.3);
	\draw[Blue] (.3,7.7) -- (0,8);
	
	\draw[Orange] (1,0) -- (3,2) -- (3,4) -- (2.7,4.3);
	\draw[Orange] (2.3,4.7) -- (1.7, 5.3);
	\draw[Orange] (1.3,5.7) -- (.7,6.3);
	\draw[Orange] (.3,6.7) -- (0,7) -- (1,8);
	
	\draw[Green] (2,0) -- (1.7,.3);
	\draw[Green] (1.3,.7) -- (.7,1.3);
	\draw[Green] (.3,1.7) -- (0,2) -- (0,4) -- (3,7) -- (2.7,7.3);
	\draw[Green] (2.3,7.7) -- (2,8);

\end{scope}

\end{tikzpicture}$$
 
\caption{Transforming a full twist $\Delta_4$ on $4$ strands into $113321322132$.}
\label{figure:4-twist}
\end{figure}

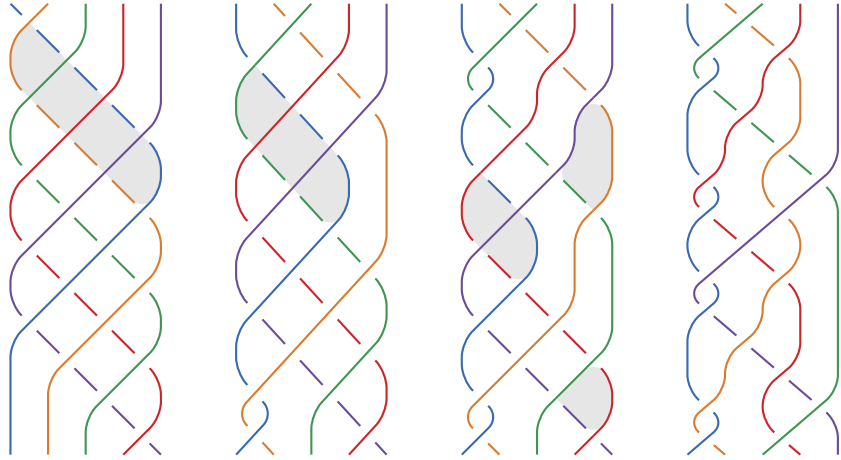
\begin{figure}[h]
$$\begin{tikzpicture}[scale=.5]

\begin{scope}[thick, rounded corners = 2mm]

	\draw[Purple] (4,0) -- (3.7,.3);
	\draw[Purple] (3.3,.7) -- (2.7,1.3);
	\draw[Purple] (2.3,1.7) -- (1.7,2.3);
	\draw[Purple] (1.3,2.7) -- (.7,3.3);
	\draw[Purple] (.3,3.7) -- (0,4) -- (0,5) -- (4,9) -- (4,12);
	
	\draw[Red] (3,0) -- (4,1) -- (4,2) -- (3.7,2.3);
	\draw[Red] (3.3,2.7) -- (2.7,3.3);
	\draw[Red] (2.3,3.7) -- (1.7,4.3);
	\draw[Red] (1.3,4.7) -- (.7,5.3);
	\draw[Red] (.3,5.7) -- (0,6) -- (0,7) -- (3,10) -- (3,12);

	\draw[Green] (2,0) -- (2,1) -- (4,3) -- (4,4) -- (3.7,4.3);
	\draw[Green] (3.3,4.7) -- (2.7,5.3);
	\draw[Green] (2.3,5.7) -- (1.7,6.3);
	\draw[Green] (1.3,6.7) -- (.7,7.3);
	\draw[Green] (.3,7.7) -- (0,8) -- (0,9) -- (2,11) -- (2,12);
	
	\draw[Orange] (1,0) -- (1,2) -- (4,5) -- (4,6) --  (3.7,6.3);
	\draw[Orange] (3.3,6.7) -- (2.7,7.3);
	\draw[Orange] (2.3,7.7) -- (1.7,8.3);
	\draw[Orange] (1.3,8.7) -- (.7,9.3);
	\draw[Orange] (.3,9.7) -- (0,10) -- (0,11) -- (.5,11.5);
	\draw[Orange] (.5,11.5) -- (1,12);
	
	\draw[Blue] (0,0) -- (0,3) -- (3.5,6.5);
	\draw[Blue] (3.5,6.5) --  (4,7) -- (4,8) -- (3.7,8.3);
	\draw[Blue] (3.3,8.7) -- (2.7,9.3);
	\draw[Blue] (2.3,9.7) -- (1.7,10.3);
	\draw[Blue] (1.3,10.7) -- (.7,11.3);
	\draw[Blue] (.3,11.7) -- (0,12);
	
	\begin{pgfonlayer}{background}
	\fill[black!10!white] (.5,11.5) -- (0,11) -- (0,10) -- (3.5,6.5) -- (4,7) -- (4,8) -- (.5,11.5);
	\end{pgfonlayer}

\end{scope}

\begin{scope}[thick, rounded corners = 2mm, xshift = 6cm,yscale = 12/11]
	
	\draw[Orange] (1,0) -- (.7,.3);
	\draw[Orange] (.3,.7) -- (0,1) -- (4,5) -- (4,8) -- (3.7,8.3);
	\draw[Orange] (1,11) -- (1.3,10.7);
	\draw[Orange] (1.7,10.3) -- (2.3,9.7);
	\draw[Orange] (2.7,9.3) -- (3.3,8.7);
	
	\draw[Blue] (0,0) -- (1,1) -- (.7,1.3);
	\draw[Blue] (.3,1.7) -- (0,2) -- (0,3) -- (2.5,5.5);
	\draw[Blue] (2.5,5.5) -- (3,6) -- (3,7) -- (2.7,7.3);
	\draw[Blue] (2.3,7.7) -- (1.7,8.3);
	\draw[Blue] (1.3,8.7) -- (.7,9.3);
	\draw[Blue] (.3,9.7) -- (0,10) -- (0,11);
	
	\draw[Purple] (4,0) -- (3.7,.3);
	\draw[Purple] (3.3,.7) -- (2.7,1.3);
	\draw[Purple] (2.3,1.7) -- (1.7,2.3);
	\draw[Purple] (1.3,2.7) -- (.7,3.3);
	\draw[Purple] (.3,3.7) -- (0,4) -- (0,5) -- (4,9) -- (4,11);
	
	\draw[Red] (3,0) -- (4,1) -- (4,2) -- (3.7,2.3);
	\draw[Red] (3.3,2.7) -- (2.7,3.3);
	\draw[Red] (2.3,3.7) -- (1.7,4.3);
	\draw[Red] (1.3,4.7) -- (.7,5.3);
	\draw[Red] (.3,5.7) -- (0,6) -- (0,7) -- (3,10) -- (3,11);
	
	\draw[Green] (2,0) -- (2,1) -- (4,3) -- (4,4) -- (3.7,4.3);
	\draw[Green] (3.3,4.7) -- (2.7,5.3);
	\draw[Green] (2.3,5.7) -- (1.7,6.3);
	\draw[Green] (1.3,6.7) -- (.7,7.3);
	\draw[Green] (.3, 7.7) -- (0,8) -- (0,9) -- (0.5,9.5);
	\draw[Green] (0.5,9.5) -- (2,11);
	
	\begin{pgfonlayer}{background}
	\fill[black!10!white] (0.5,9.5) -- (0,9) -- (0,8) -- (2.5,5.5) -- (3,6) -- (3,7) -- (.5,9.5);
	\end{pgfonlayer}

\end{scope}
\begin{scope}[thick, rounded corners = 2mm, xshift = 12cm]

	\draw[Orange] (1,0) -- (.7,.3);
	\draw[Orange] (.3,.7) -- (0,1) --  (3,4) -- (3,6) -- (3.5,6.5);
	\draw[Orange] (3.5,6.5) --	(4,7) -- (4,9) -- (3.7,9.3);
	\draw[Orange] (1,12) -- (1.3,11.7);
	\draw[Orange] (1.7,11.3) -- (2.3,10.7);
	\draw[Orange] (2.7,10.3) -- (3.3,9.7);
	
	\draw[Blue] (0,0) -- (1,1) -- (.7,1.3);
	\draw[Blue] (.3,1.7) -- (0,2) -- (0,3) -- (1.5,4.5);
	\draw[Blue] (1.5,4.5) -- (2,5) -- (2,6) -- (1.7,6.3);
	\draw[Blue] (1.3,6.7) -- (.7,7.3);
	\draw[Blue] (.3,7.7) -- (0,8) -- (0,9) -- (1,10) -- (.7,10.3);
	\draw[Blue] (.3,10.7) -- (0,11) -- (0,12);
	
	\draw[Purple] (4,0) -- (3.7,.3);
	\draw[Purple] (3.3,.7) -- (2.7,1.3);
	\draw[Purple] (2.3,1.7) -- (1.7,2.3);
	\draw[Purple] (1.3,2.7) -- (.7,3.3);
	\draw[Purple] (.3,3.7) -- (0,4) -- (0,5) -- (2.5,7.5);
	\draw[Purple] (2.5,7.5) -- (3,8) -- (3,9) -- (3.5,9.5);
	\draw[Purple] (3.5,9.5) --  (4,10) -- (4,12);
	
	\draw[Red] (3,0) --(3.5,.5);
	\draw[Red] (3.5,.5) -- (4,1) -- (4,2) -- (3.7,2.3);
	\draw[Red] (3.3,2.7) -- (2.7,3.3);
	\draw[Red] (2.3,3.7) -- (1.7,4.3);
	\draw[Red] (1.3,4.7) -- (.7,5.3);
	\draw[Red] (.3,5.7) -- (0,6) -- (0,7) -- (0.5,7.5);
	\draw[Red] (.5,7.5) -- (2,9) -- (2,10) -- (3,11) -- (3,12);
	
	\draw[Green] (2,0) -- (2,1) -- (2.5,1.5);
	\draw[Green] (2.5,1.5) -- (3.5,2.5);
	\draw[Green] (3.5,2.5) -- (4,3) -- (4,6) -- (3.7,6.3);
	\draw[Green] (3.3,6.7) -- (2.7,7.3);
	\draw[Green] (2.3,7.7) -- (1.7,8.3);
	\draw[Green](1.3,8.7) -- (.7,9.3);
	\draw[Green] (.3,9.7) -- (0,10) -- (2,12);

	\begin{pgfonlayer}{background}
	\fill[black!10!white] (.5,7.5) -- (0,7) -- (0,6) -- (1.5,4.5) -- (2,5) -- (2,6) -- (.5,7.5);
	\fill[black!10!white] (3.5,6.5) -- (2.5,7.5) -- (3,8) -- (3,9) -- (3.5,9.5) -- (4,9) -- (4,7) -- (3.5,6.5);
	\fill[black!10!white] (2.5,1.5) -- (3.5,2.5) -- (4,2) -- (4,1) -- (3.5,.5) -- (2.5,1.5);
	\end{pgfonlayer}

\end{scope} 

 \begin{scope}[thick, rounded corners = 2mm, xshift = 18cm, yscale=12/14]
 	
	\draw[Blue] (0,0) -- (1,1) -- (.7,1.3);
	\draw[Blue] (.3,1.7) -- (0,2) -- (0,4) -- (1,5) -- (.7,5.3);
	\draw[Blue] (.3,5.7) -- (0,6) -- (0,7) -- (1,8) -- (.7,8.3);
	\draw[Blue] (.3,8.7) -- (0,9) -- (0,11) -- (1,12) -- (.7,12.3);
	\draw[Blue] (.3,12.7) -- (0,13) -- (0,14);
	
	\draw[Orange] (1,0) -- (.7,.3);
	\draw[Orange] (.3,.7) -- (0,1) -- (1,2) --(1,3) -- (2,4) -- (2,5) -- (3,6) -- (3,7) -- (2.7,7.3);
	\draw[Orange] (2.3,7.7) -- (2,8) -- (2,9) -- (3,10) -- (3,12) -- (2.7,12.3);
	\draw[Orange] (1,14) -- (1.3,13.7);
	\draw[Orange] (1.7,13.3) -- (2.3,12.7);
	
	\draw[Green] (2,0) -- (4,2) -- (4,8) -- (3.7,8.3);
	\draw[Green] (3.3,8.7) -- (2.7,9.3);
	\draw[Green] (2.3,9.7) -- (1.7,10.3);
	\draw[Green] (1.3,10.7) -- (.7,11.3);
	\draw[Green] (.3,11.7) -- (0,12) -- (2,14);
	
	\draw[Red] (3,0) -- (2.7,.3);
	\draw[Red] (2.3,.7) -- (2,1) -- (2,2) -- (3,3) -- (3,5) -- (2.7,5.3);
	\draw[Red] (1.7,6.3) -- (2.3,5.7);
	\draw[Red] (1.3,6.7) -- (.7,7.3);
	\draw[Red] (.3,7.7) -- (0,8) -- (1,9) -- (1,10) -- (2,11) -- (2,12) -- (3,13) -- (3,14);
	
	\draw[Purple] (4,0) -- (4,1) -- (3.7,1.3);
	\draw[Purple] (3.3,1.7) -- (2.7,2.3);
	\draw[Purple] (2.3, 2.7) -- (1.7,3.3);
	\draw[Purple] (1.3,3.7) -- (.7,4.3);
	\draw[Purple] (.3,4.7) -- (0,5) -- (4,9) -- (4,14);
 
 \end{scope}
\end{tikzpicture}$$
\caption{Transforming a full twist $\Delta_5$ on $5$ strands into $(2311234311)^2$.}
\label{figure:5-twist}
\end{figure}

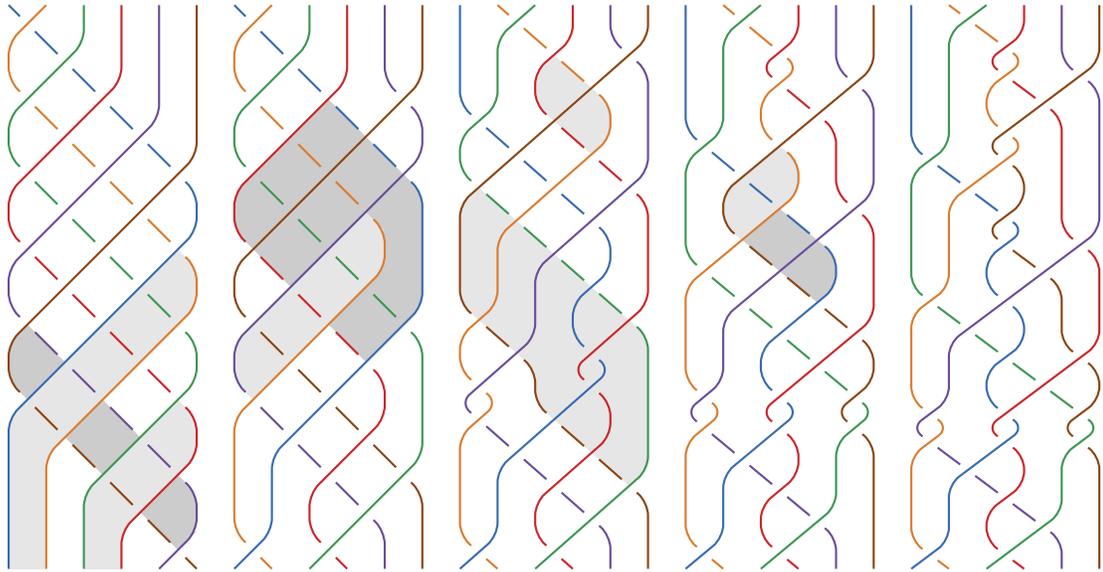
\begin{figure}[h]
$$\begin{tikzpicture}[scale=.5, rounded corners = 1.7mm, thick]
 
	\draw[Blue] (0,0) -- (0,4) -- (5,9)  --(5,10)-- (4.7,10.3);
	\draw[Blue] (4.3,10.7) -- (3.7,11.3);
	\draw[Blue] (3.3,11.7) -- (2.7,12.3);
	\draw[Blue] (2.3,12.7) -- (1.7,13.3);
	\draw[Blue] (1.3, 13.7) -- (.7,14.3);
	\draw[Blue] (.3,14.7) -- (0,15);
	
	\draw[Orange] (1,0) -- (1,3) -- (5,7) -- (5,8) -- (4.7,8.3);
	\draw[Orange] (4.3,8.7) -- (3.7,9.3);
	\draw[Orange] (3.3,9.7) -- (2.7,10.3);
	\draw[Orange] (2.3,10.7) -- (1.7,11.3);
	\draw[Orange] (1.3,11.7) -- (.7,12.3);
	\draw[Orange] (.3,12.7) -- (0,13) -- (0,14) -- (1,15);
	
	\draw[Green] (2,0) -- (2,2) -- (5,5) -- (5,6) -- (4.7,6.3);
	\draw[Green] (4.3,6.7) -- (3.7,7.3);
	\draw[Green] (3.3,7.7) -- (2.7,8.3);
	\draw[Green] (2.3,8.7) -- (1.7,9.3);
	\draw[Green] (1.3,9.7) -- (.7,10.3);
	\draw[Green] (.3,10.7) -- (0,11) -- (0,12) -- (2,14) -- (2,15);
	
	\draw[Red] (3,0) -- (3,1) -- (5,3) -- (5,4) -- (4.7,4.3);
	\draw[Red] (4.3,4.7) -- (3.7,5.3);
	\draw[Red] (3.3,5.7) -- (2.7,6.3);
	\draw[Red] (2.3,6.7) -- (1.7,7.3);
	\draw[Red] (1.3,7.7) -- (.7,8.3);
	\draw[Red] (.3,8.7) -- (0,9) -- (0,10) -- (3,13) -- (3,15);
	
	\draw[Purple] (4,0) -- (5,1) -- (5,2) -- (4.7,2.3);
	\draw[Purple] (4.3,2.7) -- (3.7,3.3);
	\draw[Purple] (3.3,3.7) -- (2.7,4.3);
	\draw[Purple] (2.3,4.7) -- (1.7,5.3);
	\draw[Purple] (1.3,5.7) -- (.7,6.3);
	\draw[Purple] (.3,6.7) -- (0,7) -- (0,8) -- (4,12) -- (4,15);

	\draw[Brown] (5,0) -- (4.7,.3);
	\draw[Brown] (4.3,.7) -- (3.7,1.3);
	\draw[Brown] (3.3,1.7) -- (2.7,2.3);
	\draw[Brown] (2.3,2.7) -- (1.7,3.3);
	\draw[Brown] (1.3,3.7) -- (.7,4.3);
	\draw[Brown] (.3,4.7) -- (0,5) -- (0,6) -- (5,11) -- (5,15);
	
	\begin{pgfonlayer}{background}

	\fill[black!10!white] (4.5,8.5) -- (5,8) -- (5,7) -- (1,3) -- (1,0) -- (0,0) -- (0,4) -- (4.5,8.5);
	\fill[black!10!white,rounded corners=0mm] (0,0) -- (0,1) -- (1,1) -- (1,0);
	\fill[black!10!white] (4.5,4.5) -- (5,4) -- (5,3) -- (3,1) -- (3,0) -- (2,0) --(2,2) -- (4.5,4.5);
	\fill[black!10!white,rounded corners=0mm] (2,0) -- (3,0) -- (3,1) -- (2,1) -- (2,0);
	
	\end{pgfonlayer}
	
	\begin{pgfonlayer}{background2}
	\fill[black!20!white] (.5,6.5) -- (0,6) -- (0,5) --(4.5,.5) -- (5,1) -- (5,2) -- (.5,6.5);
	\fill[black!20!white] (4.5,.5) -- (4,1) -- (5,1);
	\end{pgfonlayer}
\begin{scope}[xshift=6cm, thick]
	
	\draw[Blue] (0,0) -- (1,1) -- (1,3) -- (5,7) -- (5,10) -- (4.7,10.3);
	\draw[Blue] (4.3,10.7) -- (3.7,11.3);
	\draw[Blue] (3.3,11.7) -- (2.7,12.3);
	\draw[Blue] (2.3,12.7) -- (1.7,13.3);
	\draw[Blue] (1.3,13.7) -- (.7, 14.3);
	\draw[Blue] (.3,14.7) -- (0,15);
	
	\draw[Orange] (1,0) -- (.7,.3);
	\draw[Orange] (.3,.7) -- (0,1) -- (0,4) -- (4,8) -- (4,9) -- (3.7,9.3);
	\draw[Orange] (3.3,9.7) -- (2.7,10.3);
	\draw[Orange] (2.3,10.7) -- (1.7,11.3);
	\draw[Orange] (1.3,11.7) -- (.7,12.3);
	\draw[Orange] (.3,12.7) -- (0,13) --(0,14) -- (1,15);
	
	\draw[Green] (2,0) -- (5,3) -- (5,6) -- (4.7,6.3);
	\draw[Green] (4.3,6.7) -- (3.7,7.3);
	\draw[Green] (3.3,7.7) -- (2.7,8.3);
	\draw[Green] (2.3,8.7) -- (1.7,9.3);
	\draw[Green] (1.3,9.7) -- (.7, 10.3);
	\draw[Green] (.3,10.7) -- (0,11) -- (0,12) -- (2,14) -- (2,15);
	
	\draw[Red] (3,0) -- (2.7,.3);
	\draw[Red] (2.3,.7) -- (2,1) -- (2,2) -- (4,4) -- (4,5) -- (3.7,5.3);
	\draw[Red] (3.3,5.7) -- (2.7,6.3);
	\draw[Red] (2.3,6.7) -- (1.7,7.3);
	\draw[Red] (1.3,7.7) -- (.7,8.3);
	\draw[Red] (.3,8.7) -- (0,9) -- (0,10) -- (3,13) -- (3,15);
	
	\draw[Purple] (4,0) -- (4,1) -- (3.7,1.3);
	\draw[Purple] (3.3,1.7) -- (2.7,2.3);
	\draw[Purple] (2.3,2.7) -- (1.7,3.3);
	\draw[Purple] (1.3,3.7) -- (.7,4.3);
	\draw[Purple] (.3,4.7) -- (0,5) -- (0,6) -- (1,7)-- (5,11) -- (5,12) -- (4.7,12.3);
	\draw[Purple] (4.3,12.7) -- (4,13) -- (4,15);
	
	\draw[Brown] (5,0) -- (5,2) -- (4.7,2.3);
	\draw[Brown] (4.3,2.7) -- (3.7,3.3);
	\draw[Brown] (3.3,3.7) -- (2.7,4.3);
	\draw[Brown] (2.3,4.7) -- (1.7,5.3);
	\draw[Brown] (1.3,5.7) -- (.7,6.3);
	\draw[Brown] (.3,6.7) -- (0,7) -- (0,8) -- (5,13) -- (5,15);
	
	\begin{pgfonlayer}{background}
	\fill[black!10!white] (.5,4.5) -- (0,5) -- (0,6) -- (3.5,9.5) -- (4,9) -- (4,8) -- (.5,4.5);
	\fill[black!10!white] (3.5,9.5) -- (3,9) -- (4,9);
	\end{pgfonlayer}
	
	\begin{pgfonlayer}{background2}
		\fill[black!20!white]  (3.5,5.5) -- (0,9) -- (0,10) -- (2.5,12.5) -- (5,10) -- (5,7) -- (3.5,5.5);
		\fill[black!20!white] (2.5,12.5) -- (2,12) -- (3,12);
	\end{pgfonlayer}

\end{scope}
 
 \begin{scope}[xshift = 12cm, thick, yscale= 15/17]
 
 	\draw[Blue] (0,0) -- (1,1) -- (1,3) --(4,6) -- (3.7,6.3); 
	\draw[Blue] (3.3,6.7) -- (3,7) -- (3,8) -- (4,9) -- (4,10) -- (3.7,10.3);
	\draw[Blue] (3.3,10.7) -- (2.7,11.3);
	\draw[Blue] (2.3,11.7) -- (1.7,12.3);
	\draw[Blue] (1.3,12.7) -- (.7,13.3);
	\draw[Blue] (.3,13.7) -- (0,14) -- (0,17);

	\draw[Orange] (1,0) -- (.7,.3);
	\draw[Orange] (.3,.7) -- (0,1) -- (0,4) -- (1,5) -- (.7,5.3);
	\draw[Orange] (.3,5.7) -- (0,6) -- (0,7) -- (1,8) -- (1,10) -- (4,13) -- (4,14) --(3.7,14.3);
	\draw[Orange] (3.3,14.7) -- (2.7,15.3);
	\draw[Orange] (2.3,15.7) -- (1.7,16.3);
	\draw[Orange] (1.3,16.7) -- (1,17);
	
	\draw[Green] (2,0) -- (5,3) -- (5,7) -- (4.7,7.3);
	\draw[Green] (4.3,7.7) -- (3.7,8.3);
	\draw[Green] (3.3,8.7) -- (2.7,9.3);
	\draw[Green] (2.3,9.7) -- (1.7,10.3);
	\draw[Green] (1.3,10.7) -- (.7, 11.3);
	\draw[Green] (.3,11.7) -- (0,12) -- (0,13) -- (1,14) -- (1,16) -- (2,17);;
	
	\draw[Red] (3,0) -- (2.7,.3);
	\draw[Red] (2.3,.7) -- (2,1) -- (2,2) -- (4,4) -- (4,5) -- (3.7,5.3);
	\draw[Red] (3.3,5.7) -- (3,6) -- (5,8) -- (5,11) -- (4.7,11.3);
	\draw[Red] (4.3,11.7) -- (3.7,12.3);
	\draw[Red] (3.3,12.7) -- (2.7,13.3);
	\draw[Red] (2.3,13.7) -- (2,14) -- (2,15) -- (3,16) -- (3,17);

	\draw[Purple] (4,0) -- (4,1) -- (3.7,1.3);
	\draw[Purple] (3.3,1.7) -- (2.7,2.3);
	\draw[Purple] (2.3,2.7) -- (1.7,3.3);
	\draw[Purple] (1.3,3.7) -- (.7,4.3);
	\draw[Purple] (.3,4.7) -- (0,5) -- (2,7) -- (2,9) -- (5,12) -- (5,15) -- (4.7,15.3);
	\draw[Purple] (4.3,15.7) -- (4,16) -- (4,17);
		
	\draw[Brown] (5,0) -- (5,2) -- (4.7,2.3);
	\draw[Brown] (4.3,2.7) -- (3.7,3.3);
	\draw[Brown] (3.3,3.7) -- (2.7,4.3);
	\draw[Brown] (2.3,4.7) --  (2,5) -- (2,6) -- (1.7,6.3);
	\draw[Brown] (1.3,6.7) -- (.7,7.3);
	\draw[Brown] (.3,7.7) -- (0,8) -- (0,11) -- (5,16) -- (5,17);
	
	\begin{pgfonlayer}{background}
	\fill[black!10!white] (4.5,2.5) -- (2,5) -- (2,6) -- (0,8) -- (0,11) -- (.5,11.5) --(5,7) -- (5,3) -- (4.5,2.5);
	\fill[black!10!white] (.5,11.5) -- (1,11) -- (0,11);
	\fill[black!10!white] (3.5,12.5) -- (2,14) -- (2,15) -- (2.5,15.5) -- (4,14) -- (4,13) -- (3.5,12.5);
	\fill[black!10!white] (2.5,15.5) -- (2,15) -- (3,15);
	
	\end{pgfonlayer}
 
 \end{scope}
 
 
 \begin{scope}[xshift = 18cm, thick, yscale = 15/18]
 
 	\draw[Blue] (0,0) -- (1,1) -- (1,3) -- (3,5) -- (2.7,5.3);
	\draw[Blue] (2.3,5.7) -- (2,6) -- (2,7) -- (4,9) -- (4,10) -- (3.7,10.3);
	\draw[Blue] (3.3,10.7) -- (2.7,11.3);
	\draw[Blue] (2.3,11.7) -- (1.7,12.3);
	\draw[Blue] (1.3,12.7) -- (.7,13.3);
	\draw[Blue] (.3,13.7) -- (0,14) -- (0,18);
	
	\draw[Orange] (1,0) -- (.7,.3);
	\draw[Orange] (.3,.7) -- (0,1) -- (0,4) -- (1,5) -- (.7,5.3);
	\draw[Orange] (.3,5.7) -- (0,6) -- (0,9) -- (3,12) -- (3,13) -- (2.7,13.3);
	\draw[Orange] (2.3,13.7) -- (2,14) -- (2,15) -- (3,16) -- (2.7,16.3);
	\draw[Orange] (2.3,16.7) -- (1.7,17.3);
	\draw[Orange] (1.3,17.7) -- (1,18);
	
	\draw[Green] (2,0) -- (4,2) -- (4,4) -- (5,5) -- (4.7,5.3);
	\draw[Green] (4.3,5.7) -- (3.7,6.3);
	\draw[Green] (3.3,6.7) -- (2.7,7.3);
	\draw[Green] (2.3,7.7) -- (1.7,8.3);
	\draw[Green] (1.3,8.7) -- (.7,9.3);
	\draw[Green] (.3,9.7) -- (0,10) -- (0,13) -- (1,14) -- (1,17) -- (2,18);
	
	\draw[Red] (3,0) -- (2.7,.3);
	\draw[Red] (2.3,.7) -- (2,1) -- (2,2) -- (3,3) -- (3,4) --(2.7,4.3);
	\draw[Red] (2.3,4.7) -- (2,5) -- (5,8) -- (5,11) -- (4.7,11.3);
	\draw[Red] (4.3,11.7) -- (4,12) -- (4,14) -- (3.7,14.3);
	\draw[Red] (3.3,14.7) -- (2.7,15.3);
	\draw[Red] (2.3,15.7) -- (2,16) -- (3,17) -- (3,18);
	
	\draw[Purple] (4,0) -- (4,1) -- (3.7,1.3);
	\draw[Purple] (3.3,1.7) -- (2.7,2.3);
	\draw[Purple] (2.3,2.7) -- (1.7,3.3);
	\draw[Purple] (1.3,3.7) -- (.7,4.3);
	\draw[Purple] (.3,4.7) -- (0,5)  -- (1,6) -- (1,8) -- (5,12) -- (5,15) -- (4.7,15.3);
	\draw[Purple] (4.3,15.7) -- (4,16) -- (4,18);
	
	\draw[Brown] (5,0) -- (5,4) -- (4.7,4.3);
	\draw[Brown] (4.3,4.7) -- (4,5) -- (5,6) -- (5,7) -- (4.7,7.3);
	\draw[Brown] (4.3,7.7) -- (3.7,8.3);
	\draw[Brown] (3.3,8.7) -- (2.7,9.3);
	\draw[Brown] (2.3,9.7) -- (1.7,10.3);
	\draw[Brown] (1.3,10.7) -- (1,11) -- (1,12) -- (5,16) -- (5,18);
	
	\begin{pgfonlayer}{background}
	\fill[black!10!white] (1.5,10.5) -- (1,11) --(1,12) -- (2.5,13.5) -- (3,13) -- (3,12) -- (1.5,10.5);
	\fill[black!10!white] (2.5,13.5) -- (2,13) -- (3,13);
	\end{pgfonlayer}
	
	\begin{pgfonlayer}{background2}
	\fill[black!20!white] (3.5, 8.5) -- (1,11) -- (1,12) -- (1.5,12.5) -- (4,10) -- (4,9) -- (3.5,8.5);
	\end{pgfonlayer}

 \end{scope}
 
 
 \begin{scope}[xshift = 24cm, thick, yscale = 15/20]
 
  	\draw[Blue] (0,0) -- (1,1) -- (1,3) -- (3,5) -- (2.7,5.3);
	\draw[Blue] (2.3,5.7) -- (2,6) -- (2,7) -- (3,8) -- (3,9) -- (2.7,9.3);
	\draw[Blue] (2.3,9.7) -- (2,10) -- (2,11) -- (3,12) -- (2.7,12.3);
	\draw[Blue] (2.3,12.7) -- (1.7,13.3);
	\draw[Blue] (1.3,13.7) -- (.7,14.3);
	\draw[Blue] (.3,14.7) -- (0,15) -- (0,20);
	
	\draw[Orange] (1,0) -- (.7,.3);
	\draw[Orange] (.3,.7) -- (0,1) -- (0,4) -- (1,5) -- (.7,5.3);
	\draw[Orange] (.3,5.7) -- (0,6) -- (0,9) --(1,10) -- (1,13) -- (3,15) -- (2.7,15.3);
	\draw[Orange] (2.3,15.7) -- (2,16) -- (2,17) -- (3,18) -- (2.7,18.3);
	\draw[Orange] (2.3,18.7) -- (1.7,19.3);
	\draw[Orange] (1.3,19.7) -- (1,20);
	
	\draw[Green] (2,0) -- (4,2) -- (4,4) -- (5,5) -- (4.7,5.3);
	\draw[Green] (4.3,5.7) -- (3.7,6.3);
	\draw[Green] (3.3,6.7) -- (2.7,7.3);
	\draw[Green] (2.3,7.7) -- (1.7,8.3);
	\draw[Green] (1.3,8.7) -- (.7,9.3);
	\draw[Green] (.3,9.7) -- (0,10) -- (0,14) -- (1,15) -- (1,19) -- (2,20);
		
	\draw[Red] (3,0) -- (2.7,.3);
	\draw[Red] (2.3,.7) -- (2,1) -- (2,2) -- (3,3) -- (3,4) --(2.7,4.3);
	\draw[Red] (2.3,4.7) -- (2,5) -- (5,8) -- (5,11) -- (4.7,11.3);
	\draw[Red] (4.3,11.7) -- (4,12) -- (4,16) -- (3.7,16.3);
	\draw[Red] (3.3,16.7) -- (2.7,17.3);
	\draw[Red] (2.3,17.7) -- (2,18) -- (3,19) -- (3,20);
		
	\draw[Purple] (4,0) -- (4,1) -- (3.7,1.3);
	\draw[Purple] (3.3,1.7) -- (2.7,2.3);
	\draw[Purple] (2.3,2.7) -- (1.7,3.3);
	\draw[Purple] (1.3,3.7) -- (.7,4.3);
	\draw[Purple] (.3,4.7) -- (0,5)  -- (1,6) -- (1,8) -- (5,12) -- (5,17) -- (4.7,17.3); 
	\draw[Purple] (4.3,17.7) -- (4,18) -- (4,20);
	
	\draw[Brown] (5,0) -- (5,4) -- (4.7,4.3);
	\draw[Brown] (4.3,4.7) -- (4,5) -- (5,6) -- (5,7) -- (4.7,7.3); 
	\draw[Brown] (4.3, 7.7) -- (4,8) -- (4,10) -- (3.7,10.3);
	\draw[Brown] (3.3,10.7) -- (2.7,11.3);
	\draw[Brown] (2.3,11.7) -- (2,12) -- (3,13) -- (3,14) -- (2.7,14.3);
	\draw[Brown] (2.3,14.7) -- (2,15) -- (5,18) -- (5,20);
	
 \end{scope}

\end{tikzpicture}$$
\caption{Transforming a full twist $\Delta_6$ on $6$ strands into $2\zeta234\eta43$.}
\label{figure:6-twist}
\end{figure}

Lemmas \ref{lemma:T4q}, \ref{lemma:T5q}, and \ref{lemma:T6q} give equivalent braid words for all torus links on four or five  strands and the torus links of the form $T_{6,6n}$ and $T_{6,6n+1}$. The main computational tools are Equation \ref{equation:rel1} and \ref{equation:rel2}. However, in the case for $T_{5,5n+3}$, we need one additional tool, namely cyclic permutation of the braid word. While this can change the element of the braid group, it does not change the link type of the closure. If two braids $\beta_1$ and $\beta_2$ are related by a cyclic permutation of the braid word, we write $\beta_1 \equiv \beta_2$.

The computations in these proofs can involve lengthy braid words. In order to guide the reader, we adopt the following conventions. If a word $w$ is to be replaced by an equivalent word using the braid relation, we underline the word $\underline{w}$. If we swap two commuting words
$w_1$ and  $w_2$, then we indicate the move by $\underrightarrow{w_1}\underleftarrow{w_2}$. If a word $w$ is to be replaced by an equivalent word coming from a previous computation or the inductive hypothesis, then we indicate it by $\uwave{w}$.

\begin{lemma}
\label{lemma:T4q}
For each positive integer $n$, the following equalities hold in the braid group $B_4$:
\begin{itemize}
\item $(123)^{4n} = 1^{2n} 3^{2n} (2132)^{2n}$,
\item $(123)^{4n+1} = 1^{2n}3^{2n}(2132)^{2n-1}2131213$,
\item $(123)^{4n+2} = 1^{2n+2}3^{2n}(2132)^{2n+1}$, and
\item $(123)^{4n+3} = 1^{2n+2}3^{2n}(2132)^{2n}2131213$.
\end{itemize}
\end{lemma}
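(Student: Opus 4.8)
The plan is to avoid a four-way induction and instead reduce all four identities to the single fact, recorded earlier, that the full twist $\Delta_4 = (123)^4$ is central in $B_4$, together with two short ``seed'' computations. The starting point is the identity $\Delta_4 = 1^2 3^2 (2132)^2$ established in Figure \ref{figure:4-twist}. Rearranging this inside the group and using that $\sigma_1$ and $\sigma_3$ commute (Equation \ref{equation:rel1}) gives $(2132)^2 = 1^{-2}3^{-2}\Delta_4$, where $1^{-2}$ and $3^{-2}$ denote $\sigma_1^{-2}$ and $\sigma_3^{-2}$. Since $1^{-2}$, $3^{-2}$, and $\Delta_4$ pairwise commute, raising to the $n$-th power yields the key substitution
\[
(2132)^{2n} = 1^{-2n}3^{-2n}\Delta_4^n .
\]
Everything else is bookkeeping around this formula.

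Next I would record two elementary identities coming from the braid relations. First, $(123)^2 = 1^2(2132)$, which follows from the three moves $123123 = 121323 = 121232 = 112132$ (commute $31\to 13$, then apply $323\to 232$ to the last three letters, then $212 \to 121$ to the middle three). Second, the relation $2132123 = 2131213$, a single application of $\sigma_2\sigma_1\sigma_2 = \sigma_1\sigma_2\sigma_1$ to the tail $2123 \to 1213$. From the second identity I also obtain $(123)^3 = (123)^2(123) = 1^2(2132)(123) = 1^2(2132123) = 1^2(2131213)$.

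With these in hand, each case follows by writing $(123)^{4n+r} = \Delta_4^n (123)^r$ and substituting. For $r=0$, $1^{2n}3^{2n}(2132)^{2n} = 1^{2n}3^{2n}1^{-2n}3^{-2n}\Delta_4^n = \Delta_4^n$. For $r=2$, using $(2132)^{2n+1} = 1^{-2n}3^{-2n}\Delta_4^n(2132)$ and centrality collapses the power prefixes to $1^2$, leaving $\Delta_4^n 1^2(2132) = \Delta_4^n(123)^2$. For $r=1$ and $r=3$ the same cancellation leaves $\Delta_4^n(2132)^{-1}(2131213)$ and $\Delta_4^n 1^2(2131213)$ respectively, which equal $\Delta_4^n(123)$ and $\Delta_4^n(123)^3$ by the two seed identities above. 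In each case the prefix $1^{2n}3^{2n}$ or $1^{2n+2}3^{2n}$ cancels against the $1^{-2n}3^{-2n}$ produced by the substitution, which is precisely why the exponents in the four statements differ as they do.

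The only genuinely delicate points are: first, verifying the base identity $\Delta_4 = 1^2 3^2 (2132)^2$, which I would either read off Figure \ref{figure:4-twist} or confirm directly by an explicit sequence of applications of Equations \ref{equation:rel1} and \ref{equation:rel2}; and second, being careful with the odd-power case $r=1$, where $(2132)^{2n-1}$ requires $n\geq 1$ and the manipulation passes through the inverse $(2132)^{-1}$. The latter is legitimate because the claim is an equality of group elements in $B_4$, not of positive words. I expect the base identity to be the main obstacle, since it is the one place where a nontrivial sequence of relations (rather than pure commutation and cancellation) is required; once it is in place, the remaining three identities are essentially forced.
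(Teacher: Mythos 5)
Your argument is correct, and it is a genuinely different organization of the proof from the paper's. The paper proves the first identity by induction on $n$ (base case from Figure \ref{figure:4-twist}, inductive step via centrality of $\Delta_4$), and then derives the other three by positive-word rewriting; in the $(123)^{4n+2}$ case this forces it to establish and use the extra commutation $1133\cdot 2132 = 2132\cdot 1133$ (Figure \ref{figure:4-commute}) in order to slide a single $2132$ past the block $1^{2n}3^{2n}$. You instead invert the base identity to get $(2132)^{2n} = 1^{-2n}3^{-2n}\Delta_4^{n}$ and let the prefixes cancel, which eliminates both the induction and the need for that commutation lemma; the only inputs are the Figure \ref{figure:4-twist} identity, centrality of $\Delta_4$, far commutation of $\sigma_1$ and $\sigma_3$, and your two seed identities $(123)^2 = 1^2(2132)$ and $2132123 = 2131213$ (both of which also appear, in essence, inside the paper's computations). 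Your uses of inverses are legitimate since the lemma asserts equalities of elements of $B_4$, not of positive words; the only thing you give up relative to the paper is that the intermediate expressions are no longer positive braid words, which is irrelevant here because the positivity of the final right-hand sides is what the later diagrammatic arguments need, and those are unchanged. I verified each of your four substitutions and both seed identities; they all check out, including the $r=1$ case where $(2132)^{-1}$ appears transiently.
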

\begin{proof}
We prove that  $(123)^{4n} = 1^{2n} 3^{2n} (2132)^{2n}$ by induction on $n$. In the case where $n=1$, 
Figure \ref{figure:4-twist} shows that $(123)^4 = 113321322132$. Recall that $(123)^4$ is in the center of $B_4$. Hence
\begin{align*}
(123)^{4(n+1)} = & \; \uwave{(123)^{4n}}(123)^4\\
=& \; 1^{2n} 3^{2n} \underrightarrow{(2132)^{2n}} \underleftarrow{(123)^4}\\
 = & \; 1^{2n} 3^{2n} \uwave{(123)^4} (2132)^{2n}\\
 = & \; 1^{2n} \underrightarrow{3^{2n}}  \underleftarrow{11}33213221132  (2132)^{2n}\\
 = & \; 1^{2(n+1)} 3^{2(n+1)} (2132)^{2(n+1)}.\\
\end{align*}

Figure \ref{figure:4-commute} shows that $11332132 = 21321133$. Now since $(123)^{4n}$ is in the center of $B_4$, we have
\begin{align*}
(123)^{4n+1} = &\; \uwave{(123)^{4n}} 123\\
= & \; 1^{2n} 3^{2n} (2132)^{2n-1} 213\underline{212}3\\
= & \; 1^{2n} 3^{2n} (2132)^{2n-1} 2131213,\\
(123)^{4n+2} = &\; 123123\uwave{(123)^{4n}}\\
= &\; 12\underrightarrow{3}\underleftarrow{1}231^{2n} 3^{2n}  (2132)^{2n}\\
= & \;121\underline{323} 1^{2n} 3^{2n} (2132)^{2n}\\
 = & \;1\underline{212}321^{2n} 3^{2n}  (2132)^{2n}\\
  = & \;11\underrightarrow{2132} \underleftarrow{1^{2n} 3^{2n}}  (2132)^{2n}\\
  =& \;  1^{2n+2}3^{2n}(2132)^{2n+1},~\text{and}\\
(123)^{4n+3} = & \; \uwave{(123)^{4n+2}}123\\
= & 1^{2n+2}3^{2n}(2132)^{2n}213\underline{212}3\\
= & 1^{2n+2}3^{2n}(2132)^{2n}2131213.
\end{align*}
\end{proof}

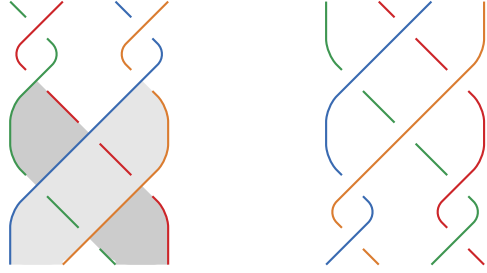
\begin{figure}[h]
$$\begin{tikzpicture}[thick, rounded corners  = 2mm,scale = .7]
	\draw[Blue]  (0,0) -- (0,1) -- (3,4) -- (2.7,4.3);
	\draw[Blue] (2.3,4.7) -- (2,5);
	
	\draw[Orange] (1,0) -- (3,2) --(3,3) -- (2.7,3.3);
	\draw[Orange] (2.3,3.7) -- (2,4) -- (3,5);
	
	\draw[Green] (2,0) -- (1.7,.3);
	\draw[Green] (1.3,.7) -- (.7,1.3);
	\draw[Green] (.3,1.7) -- (0,2) -- (0,3) -- (1,4) -- (.7,4.3);
	\draw[Green] (.3,4.7) -- (0,5);

	\draw[Red] (3,0) -- (3,1) -- (2.7,1.3);
	\draw[Red] (2.3,1.7) -- (1.7,2.3);
	\draw[Red] (1.3,2.7) -- (.7,3.3);
	\draw[Red] (.3,3.7) -- (0,4) -- (1,5);
	
	\begin{pgfonlayer}{background}
	\fill[black!10!white](2.5,3.5) -- (3,3) -- (3,2) -- (1,0) -- (0,0) -- (0,1) --(2.5,3.5); 
	\fill[black!10!white] (0,0) -- (0,.5) -- (1,.5) -- (1,0);
	\end{pgfonlayer}
	
	\begin{pgfonlayer}{background2}
	\fill[black!20!white] (.5,3.5) -- (0,3) -- (0,2) -- (2,0) -- (3,0) -- (3,1) -- (.5,3.5);
	\fill[black!20!white] (2,0) -- (2,.5) -- (3,.5) -- (3,0);
	\end{pgfonlayer}
	
\begin{scope} [xshift = 6cm, thick]

	\draw[Blue] (0,0) -- (1,1) -- (.7,1.3);
	\draw[Blue] (.3,1.7) -- (0,2) -- (0,3) -- (2,5);
	
	\draw[Orange] (1,0) -- (.7,.3);
	\draw[Orange] (.3,.7) -- (0,1) -- (3,4) -- (3,5);
	
	\draw[Green] (2,0) -- (3,1) -- (2.7,1.3);
	\draw[Green] (2.3,1.7) -- (1.7,2.3);
	\draw[Green] (1.3,2.7) -- (.7,3.3);
	\draw[Green] (.3,3.7) -- (0,4) -- (0,5);
	
	\draw[Red] (3,0) -- (2.7,.3);
	\draw[Red] (2.3,.7) -- (2,1) -- (3,2) -- (3,3) -- (2.7,3.3);
	\draw[Red] (2.3,3.7) -- (1.7,4.3);
	\draw[Red] (1.3,4.7) -- (1,5);

\end{scope}
	
\end{tikzpicture}$$
\caption{The braid words $1133$ and $2132$ commute. Thus $11332132 = 21321133$.}
\label{figure:4-commute}
\end{figure}

\begin{lemma}
\label{lemma:T5q}
Let $\alpha,\beta,\gamma\in B_5$ be defined by
$$\alpha = 323311,~\beta=31123112334311,~\text{and}~\gamma=31123112311.$$
For each positive integer $n$, the following equalities hold in $B_5$:
\begin{itemize}
\item $(1234)^{5n} = 2311 \alpha^{n-1} 234 \beta^{n-1}\gamma 43$,
\item $(1234)^{5n+1} = 1323311 \alpha^{n-1} 234 \beta^{n-1}\gamma 343$,
\item $(1234)^{5n+2} =  1231323311\alpha^{n-1} 234 \beta^{n-1}\gamma3433$,
\item $(1234)^{5n+3} \equiv 12131231323311\alpha^{n-1} 234 \beta^{n-1}\gamma 3433,$ and
\item $(1234)^{5n+4} = 2311 \alpha^n 234 \beta^n 311231422.$
\end{itemize}
\end{lemma}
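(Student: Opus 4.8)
The plan is to follow the template of Lemma \ref{lemma:T4q}: prove the divisible case $(1234)^{5n}$ by induction on $n$, using that the full twist $\Delta_5 = (1234)^5$ is central in $B_5$, and then obtain the other four residues by multiplying by copies of $1234$ and normalizing with the braid relations \ref{equation:rel1} and \ref{equation:rel2}. The base case $n=1$ reads $(1234)^5 = 2311\,234\,\gamma\,43$, and I would check it against the relation $\Delta_5 = (2311234311)^2$ from Figure \ref{figure:5-twist}: after deleting the common blocks, the two words differ only in their last four letters, $4311$ versus $1143$, and these agree because $\sigma_1$ commutes with both $\sigma_3$ and $\sigma_4$ by \ref{equation:rel1}.

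For the inductive step I would write $(1234)^{5(n+1)} = (1234)^{5n}\,\Delta_5$, substitute the inductive hypothesis $(1234)^{5n} = 2311\,\alpha^{n-1}\,234\,\beta^{n-1}\gamma\,43$, and use centrality to slide $\Delta_5$ into the position immediately after $2311\,\alpha^{n-1}$. Cancelling the common prefix $2311\,\alpha^{n-1}$ and the common suffix $\beta^{n-1}\gamma\,43$, the entire step collapses to the single $n$-independent identity $\Delta_5\cdot 234 = \alpha\cdot 234\cdot\beta$ in $B_5$; the lengths match, since $|\alpha|+|\beta| = 6+14 = 20 = |\Delta_5|$, and the induced permutations agree, so this is exactly the relation that promotes $\alpha^{n-1}\beta^{n-1}$ to $\alpha^{n}\beta^{n}$. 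Once $(1234)^{5n}$ is known for all $n$, I would exploit the fact that the central block $\alpha^{n-1}\,234\,\beta^{n-1}\gamma$ is \emph{identical} in the normal forms for $5n$, $5n+1$, $5n+2$, and $5n+3$: only the head and tail grow, so passing from one residue to the next is a matter of multiplying by one more $1234$ and redistributing its four crossings near the head and tail via \ref{equation:rel1} and \ref{equation:rel2}. The $5n+3$ form is reached only after a cyclic permutation of the word, which is why it is recorded with $\equiv$ rather than $=$. Finally, for $(1234)^{5n+4}$ I would use the already-proven relation $(1234)^{5(n+1)} = (1234)^{5n+4}\cdot 1234$; cancelling the common head $2311\,\alpha^{n}\,234\,\beta^{n}$ reduces the claim to the fixed identity $311231422\cdot 1234 = \gamma\,43$, equivalently $4221234 = 1231143$, which one verifies directly.

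The main obstacle is verifying the crux identity $\Delta_5\cdot 234 = \alpha\cdot 234\cdot\beta$. Unlike the $B_4$ case, where a single commutation of $1133$ past $2132$ powered the induction, here one must show that a full twist, suitably placed, factors into the two prescribed blocks $\alpha$ and $\beta$ around the word $234$; this is a concrete but lengthy sequence of moves from \ref{equation:rel1} and \ref{equation:rel2}, and the care lies in ordering them so that every intermediate word remains in a reducible form built from the chosen generators. Once this identity and the analogous head and tail identities for the residues are established, the remainder of the argument is routine cancellation and induction.
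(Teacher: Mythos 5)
Your proposal is correct and follows essentially the same route as the paper: induction on $n$ using centrality of $\Delta_5$, with the inductive step reduced to the $n$-independent identity $\Delta_5\cdot 234 = \alpha\cdot 234\cdot\beta$ (exactly the short computation $232\to 323$, $11\cdot 4\to 4\cdot 11$, $434\to 343$ that the paper performs on $23\,(2311234311)^2\,4$), the remaining residues obtained by appending copies of $1234$ and normalizing near the head and tail, a cyclic permutation only in the $5n+3$ case, and the $5n+4$ case recovered by cancelling a trailing $1234$ from the $5(n+1)$ form. Your explicit verifications of the base case ($4311=1143$) and of $4221234=1231143$ are correct, and the one crux identity you leave unverified is true and amounts to only a few braid moves, so nothing essential is missing.
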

\begin{proof}
We show that $(1234)^{5n} = 2311 \alpha^{n-1} 234 \beta^{n-1}\gamma 43$ by induction on $n$. For $n=1$, Figure \ref{figure:5-twist} shows that $(1234)^5 = (2311234311)^2$. We have 
\begin{align*}
(1234)^{5(n+1)} = & \; \uwave{(1234)^{5n}} \; \uwave{(1234)^5}\\
= & \; 2311 \alpha^{n-1} 23\underrightarrow{4 \beta^{n-1}\gamma 43} \underleftarrow{(2311234311)^2}\\
= &  \; 2311 \alpha^{n-1} \underline{232}311234311 23112343\underrightarrow{11} \underleftarrow{4}\beta^{n-1}\gamma 43 \\
= & \; 2311 \alpha^{n-1} 323311234311231123\underline{434}11  \beta^{n-1}\gamma 43\\
= & \;  2311 \alpha^{n-1} \uwave{323311}234\uwave{31123112334311} \beta^{n-1}\gamma 43\\
= & \;  2311 \alpha^{n-1} \alpha 234\beta \beta^{n-1}\gamma 43\\ 
= & \; 2311 \alpha^{n} 234 \beta^{n}\gamma 43.
\intertext{Also,}
(1234)^{5n+1} =&\; \underrightarrow{(1234)^{5n}} \underleftarrow{(123}4)\\
= & \; 123 \uwave{(1234)^{5n}} 4\\
= &\; 1 \underline{232} 311 \alpha^{n-1} 234 \beta^{n-1}\gamma \underline{434}\\
= &\; 1323311 \alpha^{n-1} 234 \beta^{n-1}\gamma 343.\\
\intertext{Figure \ref{figure:5-braid1} shows that $12341234 = 12312343$. Thus }
(1234)^{5n+2} = &\; (1234)^{5n} \uwave{(12341234)}\\
= &\; \underrightarrow{(1234)^{5n}} \underleftarrow{(123123}43)\\
= &\; 123123\uwave{(1234)^{5n}} 43\\
= & \; 1231\underline{23 2}311 \alpha^{n-1} 234 \beta^{n-1}\gamma \underline{434}3\\
= & \; 123132 3311 \alpha^{n-1} 234 \beta^{n-1}\gamma 3433.
\intertext{Figure \ref{figure:5-braid2} shows that $(1234)^3 = (123)^3 432$. Thus}
(1234)^{5n+3} = & \; (1234)^{5n}\uwave{(1234)^3}\\
= & \; \underrightarrow{(1234)^{5n}}\underleftarrow{(123)^3} 432\\
= & \; (123)^3 \uwave{(1234)^{5n}} 432\\
= &\; 123 123 1\underline{23 2}311 \alpha^{n-1} 234 \beta^{n-1}\gamma \underline{434}32\\
= & \; 123 123 132 3311 \alpha^{n-1} 234 \beta^{n-1}\gamma 3433\underrightarrow{2}\\
\equiv & \; \underline{212}3 123 132 3311\alpha^{n-1} 234 \beta^{n-1}\gamma 3433\\
= & \; 1213 123 132 3311\alpha^{n-1} 234 \beta^{n-1}\gamma 3433.
\intertext{Also,}
\uwave{(1234)^{5(n+1)}} 
= & \; 2311 \alpha^{n} 234 \beta^{n}31123112\underrightarrow{3}\underleftarrow{11}43\\
= & \; 2311 \alpha^{n} 234 \beta^{n}3112311211\underline{343}\\
= & \; 2311 \alpha^{n} 234 \beta^{n}311231\underrightarrow{1211}\underleftarrow{4}34\\
= & \; 2311 \alpha^{n} 234 \beta^{n}3112314\underline{121}134\\
 = &  \; 2311 \alpha^{n} 234 \beta^{n}31123142\underline{121}34\\
 = &  \; 2311 \alpha^{n} 234 \beta^{n}3112314221234.\\
 \intertext{Therefore}
 (1234)^{5n+4} = & \; 2311 \alpha^{n} 234 \beta^{n}311231422.\\
\end{align*}

\end{proof}

\begin{figure}[h]
$$\begin{tikzpicture}[thick,rounded corners = 2mm, scale = .6]

\draw[Blue] (0,0) -- (0,3) -- (2,5) -- (2,6);
\draw[Orange] (1,0) -- (1,2) -- (3,4) -- (3,6);
\draw[Green] (2,0) -- (2,1) -- (4,3) -- (4,6);
\draw[Red] (3,0) -- (4,1) -- (4,2) -- (3.7,2.3);
\draw[Red] (3.3,2.7) -- (2.7,3.3);
\draw[Red] (2.3,3.7) -- (1.7,4.3);
\draw[Red] (1.3,4.7) -- (.7,5.3);
\draw[Red] (.3,5.7) -- (0,6); 
\draw[Purple] (4,0) -- (3.7,.3);
\draw[Purple] (3.3,.7) -- (2.7,1.3);
\draw[Purple] (2.3, 1.7) -- (1.7,2.3);
\draw[Purple] (1.3,2.7) -- (.7,3.3);
\draw[Purple] (.3,3.7) -- (0,4) -- (0,5) -- (1,6);

\begin{pgfonlayer}{background}
\fill[black!10!white] (2.5,1.5) -- (3.5,2.5) -- (4,2) -- (4,1) -- (3.5,.5) -- (2.5,1.5);
\fill[black!10!white]  (3.5,2.5) -- (3.3,2.3) -- (3.7,2.3);
\fill[black!10!white] (3.5,.5) -- (3.7,.7) -- (3.3,.7);
\end{pgfonlayer}

\begin{scope}[xshift = 7cm, thick, yscale = 6/7]

	\draw[Green] (2,0) -- (4,2) -- (4,7);
	
	\draw[Red] (3,0) -- (2.7,.3);
	\draw[Red] (2.3,.7) -- (2,1) -- (2,2) -- (3,3) -- (3,4) -- (2.7,4.3);
	\draw[Red] (2.3,4.7) -- (1.7,5.3);
	\draw[Red] (1.3,5.7) -- (.7,6.3);
	\draw[Red] (.3,6.7) -- (0,7);
	
	\draw[Purple] (4,0) -- (4,1) -- (3.7,1.3);
	\draw[Purple] (3.3,1.7) -- (2.7,2.3);
	\draw[Purple] (2.3,2.7) -- (1.7,3.3);
	\draw[Purple] (1.3,3.7) -- (.7,4.3);
	\draw[Purple] (.3,4.7) -- (0,5) -- (0,6) -- (1,7);
	
	\draw[Orange] (1,0) -- (1,3) -- (3,5) -- (3,7);
	
	\draw[Blue] (0,0) -- (0,4) -- (2,6) -- (2,7);

\end{scope}

\end{tikzpicture}$$
\caption{In $B_5$ the equality $12341234 = 12312343$ holds.}
\label{figure:5-braid1}
\end{figure}
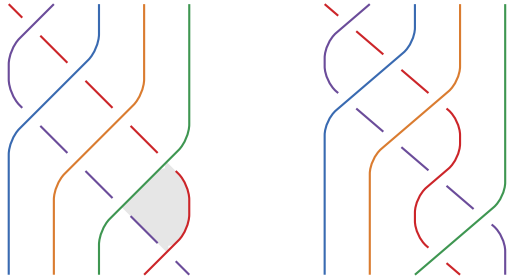

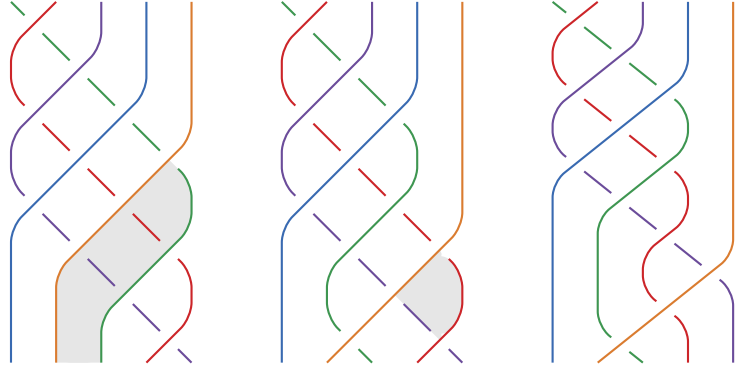
\begin{figure}[h]

$$\begin{tikzpicture}[thick, rounded corners = 2mm, scale = .6]

	\draw[Blue] (0,0) -- (0,3) -- (3,6) -- (3,8);
	
	\draw[Orange] (1,0) -- (1,2) -- (4,5) -- (4,8);
	
	\draw[Green] (2,0) -- (2,1) -- (4,3) -- (4,4) -- (3.7,4.3);
	\draw[Green] (3.3, 4.7) -- (2.7,5.3);
	\draw[Green] (2.3,5.7) -- (1.7,6.3);
	\draw[Green] (1.3,6.7) -- (.7,7.3);
	\draw[Green] (.3,7.7) -- (0,8);
	
	\draw[Red] (3,0) -- (4,1) -- (4,2) -- (3.7,2.3);
	\draw[Red] (3.3,2.7) -- (2.7,3.3);
	\draw[Red] (2.3,3.7) -- (1.7,4.3);
	\draw[Red] (1.3,4.7) -- (.7,5.3);
	\draw[Red] (.3,5.7) -- (0,6) -- (0,7) -- (1,8);
	
	\draw[Purple] (4,0) -- (3.7,.3);
	\draw[Purple] (3.3,.7) -- (2.7,1.3);
	\draw[Purple] (2.3,1.7) -- (1.7,2.3);
	\draw[Purple] (1.3,2.7) -- (.7,3.3);
	\draw[Purple] (.3,3.7) -- (0,4) -- (0,5) --(2,7) -- (2,8);
	
	\begin{pgfonlayer}{background}
	\fill[black!10!white] (3.5,4.5) -- (4,4) -- (4,3) -- (2,1) -- (2,0) -- (1,0) -- (1,2) -- (3.5,4.5);
	\fill[black!10!white] (1,0) -- (1,.5) -- (2,.5) -- (2,0);
	\end{pgfonlayer}
	
\begin{scope}[xshift = 6cm, thick]

	\draw[Blue] (0,0) -- (0,3) -- (3,6) -- (3,8);
	
	\draw[Orange] (1,0) -- (4,3) -- (4,8);
		
	\draw[Green] (2,0) -- (1.7,.3);
	\draw[Green] (1.3,.7) -- (1,1) -- (1,2) -- (3,4) --(3,5) -- (2.7,5.3);
	\draw[Green] (2.3,5.7) -- (1.7,6.3);
	\draw[Green] (1.3,6.7) -- (.7,7.3);
	\draw[Green] (.3,7.7) -- (0,8);
	
	\draw[Red] (3,0) -- (4,1) -- (4,2) -- (3.7,2.3);
	\draw[Red] (3.3,2.7) -- (2.7,3.3);
	\draw[Red] (2.3,3.7) -- (1.7,4.3);
	\draw[Red] (1.3,4.7) -- (.7,5.3);
	\draw[Red] (.3,5.7) -- (0,6) -- (0,7) -- (1,8);
	
	\draw[Purple] (4,0) -- (3.7,.3);
	\draw[Purple] (3.3,.7) -- (2.7,1.3);
	\draw[Purple] (2.3,1.7) -- (1.7,2.3);
	\draw[Purple] (1.3,2.7) -- (.7,3.3);
	\draw[Purple] (.3,3.7) -- (0,4) -- (0,5) --(2,7) -- (2,8);
	
	\begin{pgfonlayer}{background}
	\fill[black!10!white] (2.5,1.5) -- (3.5,.5) -- (4,1) -- (4,2) -- (3.5,2.5) -- (2.5,1.5);
	\fill[black!10!white] (3.5,.5) -- (3.7,.7) -- (3.3,.7);
	\fill[black!10!white] (3.5,2.5) -- (3.7,2.3) -- (3.3,2.3);
	\end{pgfonlayer}

\end{scope}

\begin{scope}[xshift = 12cm, thick, yscale = 8/10]

	\draw[Blue] (0,0) -- (0,5) -- (3,8) -- (3,10);
	
	\draw[Orange] (1,0) -- (4,3) -- (4,10);
		
	\draw[Green] (2,0) -- (1.7,.3);
	\draw[Green] (1.3,.7) -- (1,1) -- (1,4) -- (3,6) --(3,7) -- (2.7,7.3);
	\draw[Green] (2.3,7.7) -- (1.7,8.3);
	\draw[Green] (1.3,8.7) -- (.7,9.3);
	\draw[Green] (.3,9.7) -- (0,10);
	
	\draw[Red] (3,0) --(3,1) -- (2.7,1.3);
	\draw[Red] (2.3,1.7) -- (2,2) -- (2,3) -- (3,4) -- (3,5)--(2.7,5.3);
	\draw[Red] (2.3, 5.7) -- (1.7,6.3);
	\draw[Red] (1.3, 6.7) -- (.7,7.3); 
	\draw[Red] (.3,7.7) -- (0,8) -- (0,9) -- (1,10);

	\draw[Purple] (4,0) -- (4,2) -- (3.7,2.3);
	\draw[Purple] (3.3,2.7) -- (2.7,3.3);
	\draw[Purple] (2.3,3.7) -- (1.7,4.3);
	\draw[Purple] (1.3,4.7) -- (.7,5.3);
	\draw[Purple] (.3,5.7) -- (0,6) -- (0,7) --(2,9) -- (2,10);

\end{scope}

\end{tikzpicture}$$
\caption{In $B_5$ the equality $(1234)^3 = (123)^3 432$ holds.}
\label{figure:5-braid2}
\end{figure}

\begin{lemma}
\label{lemma:T6q}
Let $\zeta,\eta\in B_6$ be defined by
$$\zeta = 133545332334513~\text{and}~ \eta= 315531213.$$
For each positive integer $n$, the following equalities hold in the braid group $B_6$:
\begin{itemize}
\item $(12345)^{6n} = 2(\zeta323)^{n-1}\zeta 234 (\eta343)^{n-1}\eta 43$,
\item $(12345)^{6n+1} = 1323(\zeta323)^{n-1}\zeta 234 (\eta343)^{n-1}\eta3435$,
\end{itemize}
\end{lemma}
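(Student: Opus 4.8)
The plan is to prove both identities by following the template of Lemmas \ref{lemma:T4q} and \ref{lemma:T5q}: establish the first identity by induction on $n$ with Figure \ref{figure:6-twist} supplying the base case, and then deduce the second identity from the first by a short computation. Throughout I use only the braid relations (Equations \ref{equation:rel1} and \ref{equation:rel2}) together with the fact that the full twist $\Delta_6 = (12345)^6$ lies in the center of $B_6$.

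For the first identity, the base case $n=1$ reads $(12345)^6 = 2\zeta 234\eta 43$, which is exactly the content of Figure \ref{figure:6-twist}; note that at $n=1$ the factors $(\zeta 323)^{n-1}$ and $(\eta 343)^{n-1}$ are empty. For the inductive step I would write
$$(12345)^{6(n+1)} = (12345)^{6n}\,(12345)^6,$$
substitute the inductive hypothesis $(12345)^{6n} = 2(\zeta 323)^{n-1}\zeta 234(\eta 343)^{n-1}\eta 43$, and then exploit centrality of $\Delta_6$ to reinsert the word $2\zeta 234\eta 43$ for $(12345)^6$ at the seam just after the prefix $2(\zeta 323)^{n-1}\zeta 23$. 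Concretely, writing $(12345)^{6n} = P\,Q$ with $P = 2(\zeta 323)^{n-1}\zeta 23$ and $Q = 4(\eta 343)^{n-1}\eta 43$, centrality gives $PQ\cdot\Delta_6 = P\,\Delta_6\,Q$, so that
$$(12345)^{6(n+1)} = 2(\zeta 323)^{n-1}\zeta\,23\cdot 2\zeta 234\eta 43\cdot 4(\eta 343)^{n-1}\eta 43.$$
The two new blocks now appear at the seams: at the left seam $\zeta\,232\,\zeta = \zeta\,323\,\zeta$ by the relation $232 = 323$, producing one more $\zeta 323$ factor, while at the right seam $\eta\,434\,(\eta 343)^{n-1} = \eta\,343\,(\eta 343)^{n-1} = (\eta 343)^n$ by the relation $434 = 343$; the central $234$ is untouched. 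Collecting terms yields $2(\zeta 323)^n\zeta 234(\eta 343)^n\eta 43$, completing the induction.

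For the second identity I would use centrality to move a prefix of $12345$ to the front,
$$(12345)^{6n+1} = (12345)^{6n}\cdot 12345 = 123\,(12345)^{6n}\,45,$$
substitute the first identity, and simplify the two ends. The leading $123\cdot 2 = 1\,232$ becomes $1\,323$ by $232 = 323$, while the trailing $\eta 43\cdot 45 = \eta\,4345 = \eta\,3435$ by $434 = 343$. This gives $1323(\zeta 323)^{n-1}\zeta 234(\eta 343)^{n-1}\eta 3435$, as claimed; the same computation covers $n=1$ since the empty powers cause no trouble.

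The main obstacle is entirely in the base case: one must trust (or verify from Figure \ref{figure:6-twist}) that $\Delta_6$ equals the particular word $2\zeta 234\eta 43$, whose long subwords $\zeta$ and $\eta$ are engineered precisely so that appending one full twist reproduces the block structure. Once the base case is granted, the inductive step is pure bookkeeping: the only braid relations needed are $232 = 323$ and $434 = 343$, each applied once per seam, and every reassociation is justified by centrality of $\Delta_6$. I expect any further cases $(12345)^{6n+j}$ to follow this same pattern, multiplying by $(12345)^j$ and, where necessary, invoking a cyclic permutation as in the $T_{5,5n+3}$ case of Lemma \ref{lemma:T5q}.
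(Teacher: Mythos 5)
Your proposal is correct and follows essentially the same route as the paper: induction on $n$ with Figure \ref{figure:6-twist} as the base case, centrality of $\Delta_6$ to reposition the full twist at the seam, and a single application of $232=323$ and of $434=343$ at each seam, with the second identity obtained by commuting $123$ to the front and simplifying both ends exactly as you describe.
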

\begin{proof}
We show the first equality by induction on $n$. If $n=1$, then
Figure \ref{figure:6-twist} shows that $(12345)^6 = 2 \zeta 234 \eta 43$. We have
\begin{align*}
(12345)^{6(n+1)} = & \; \uwave{(12345)^{6n}} \; \uwave{(12345)^6}\\
= & \; 2(\zeta323)^{n-1}\zeta 23\underrightarrow{4 (\eta343)^{n-1}\eta 43} \underleftarrow{2 \zeta 234 \eta 43}\\
= & \; 2(\zeta323)^{n-1}\zeta \underline{232} \zeta 234 \eta \underline{434} (\eta343)^{n-1}\eta 43\\
= & \; 2\uwave{(\zeta323)^{n-1}\zeta 32 3} \zeta 234 \uwave{\eta 343 (\eta343)^{n-1}}\eta 43\\
= & \; 2(\zeta323)^n \zeta234 (\eta343)^n \eta 43,
\intertext{as desired. Also,}
(12345)^{6n+1} =& \;\underrightarrow{(12345)^{6n}} \underleftarrow{(123}45)\\
= &\; 123\uwave{(12345)^{6n}}45\\
= &  \; 1\underline{23 2}(\zeta323)^{n-1}\zeta 234 (\eta343)^{n-1}\eta \underline{43 4}5\\
= & \; 1323(\zeta323)^{n-1}\zeta 234 (\eta343)^{n-1}\eta 34 35.
\end{align*} 
\end{proof}

Let $D_{p,q}$ be the diagram of $T_{p,q}$ appearing on the right hand side in Lemmas \ref{lemma:T4q}, \ref{lemma:T5q}, and \ref{lemma:T6q}. Propositions \ref{proposition:TuraevT4q}, \ref{proposition:TuraevT5q}, and \ref{proposition:TuraevT6q} compute the genus of the Turaev surface for each such $D_{p,q}$.

\begin{proposition}
\label{proposition:TuraevT4q}
Let $D_{4,4n+i}$ be the closure of the braid diagrams appearing on the right hand side of Lemma \ref{lemma:T4q} for $i=0,1,2,$ and $3$. Then
$$g_T(D_{4,4n}) = g_T(D_{4,4n+1}) = 2n~\text{and}~g_T(D_{4,4n+2}) = g_T(D_{4,4n+3}) = 2n+1.$$
\end{proposition}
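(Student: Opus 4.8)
The plan is to apply the crossing/state-circle formula for the Turaev genus of a connected diagram. Each $D_{4,4n+i}$ is the closure of a positive $4$-braid in which all of $\sigma_1,\sigma_2,\sigma_3$ appear, so the diagram is connected and Equation \ref{equation:g_T(D)} gives $g_T(D_{4,4n+i}) = \tfrac12\left(2 + c(D_{4,4n+i}) - s_A(D_{4,4n+i}) - s_B(D_{4,4n+i})\right)$. Since the braid relations \ref{equation:rel1} and \ref{equation:rel2} and cyclic permutation all preserve the number of letters, the crossing number is simply the length of the words in Lemma \ref{lemma:T4q}, namely $c(D_{4,4n+i}) = 3(4n+i) = 12n + 3i$ for $i=0,1,2,3$. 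Thus the whole computation reduces to counting the circles in the all-$A$ and all-$B$ states.

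The all-$A$ count is immediate. For a positive generator $\sigma_i$, the $A$-smoothing of Figure \ref{figure:Resolution} is the oriented resolution, which keeps each strand in its own column; replacing every crossing of a positive braid by this smoothing yields the closure of the identity braid. Hence $s_A(D_{4,4n+i}) = 4$ for every $i$, independent of the word. The all-$B$ state is the heart of the argument. There each $\sigma_i$ becomes the cap-cup $e_i$ joining strands $i$ and $i+1$, so the all-$B$ state is the trace closure of a flat product of the $e_i$, and $s_B$ equals the number of loops created while reducing this product plus the number of circles produced by the closure. The grouped shape of the words in Lemma \ref{lemma:T4q} is designed precisely to make this count clean: a twist region $\sigma_i^{\,k}$ reduces via $e_i^{\,k}=\delta^{\,k-1}e_i$ and so contributes $k-1$ loops, while the repeated block $2132$ gives a flat tangle $E=e_2e_1e_3e_2$ (two cups at the bottom, two caps at the top, no through-strands) satisfying $E^2=\delta^2 E$, so $(2132)^{m}$ contributes $2(m-1)$ loops. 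After these reductions one is left composing the three surviving blocks $e_1$, $e_3$, $E$ (together with the short tail $e_2e_1e_3e_1e_2e_1e_3$ coming from $2131213$ in the cases $i=1,3$) and then closing up; this finite diagram chase produces $s_B(D_{4,4n+i}) = 8n-2,\ 8n+1,\ 8n+2,\ 8n+5$ for $i=0,1,2,3$. Substituting these values along with $c=12n+3i$ and $s_A=4$ into Equation \ref{equation:g_T(D)} gives the four claimed genera.

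The only real obstacle is the bookkeeping for $s_B$: one must track both the loops formed while collapsing the twist regions and the repeated blocks, and the way the surviving cups and caps reconnect under the braid closure. This can be organized either by the algebraic reductions above or by an induction on $n$ whose base case is the all-$B$ state of $T_{4,5}$ pictured in Figure \ref{figure:Turaev45}. Either way, the small cases should be noted separately, since the formulas in Lemma \ref{lemma:T4q} presuppose $n\geq 1$ when $i=0,1$ (in particular $T_{4,1}$, the unknot, has $g_T=0=2\cdot 0$ and is handled directly), while the cases $i=2,3$ already make sense for $n\geq 0$.
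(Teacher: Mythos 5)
Your proposal is correct and follows essentially the same route as the paper: both compute $c(D_{4,4n+i})=12n+3i$ and $s_A=4$, obtain the same values $s_B = 8n-2,\,8n+1,\,8n+2,\,8n+5$, and conclude via Equation \ref{equation:g_T(D)}. The only difference is bookkeeping: the paper reads $s_B$ off explicit pictures of the all-$B$ states (Figure \ref{figure:B4states}), whereas you organize the same circle count algebraically in the Temperley--Lieb algebra using $e_i^k=\delta^{k-1}e_i$ and $E^2=\delta^2E$ for $E=e_2e_1e_3e_2$ --- a valid and easily checkable substitute (e.g.\ for $i=0$ the word reduces to $\delta^{8n-3}e_1e_3e_2$, whose closure is a single circle, giving $s_B=8n-2$).
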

\begin{proof}
The diagram $D_{4,4n+i}$ has $12n+3i$ crossings. The number of components in the all-$A$ state of $D_{4,4n+i}$ is $s_A(D_{4,4n+i}) = 4$. Figure \ref{figure:B4states} shows the all-$B$ states of $D_{4,4n+i}$. From that figure, one can conclude
$$s_B(D_{4,4n}) = 8n-2,~ s_B(D_{4,4n+1}) = 8n+1, ~s_B(D_{4,4n+2}) = 8n+2,~\text{and}~s_B(D_{4,4n+3}) = 8n+5.$$
Equation \ref{equation:g_T(D)} then implies the result.
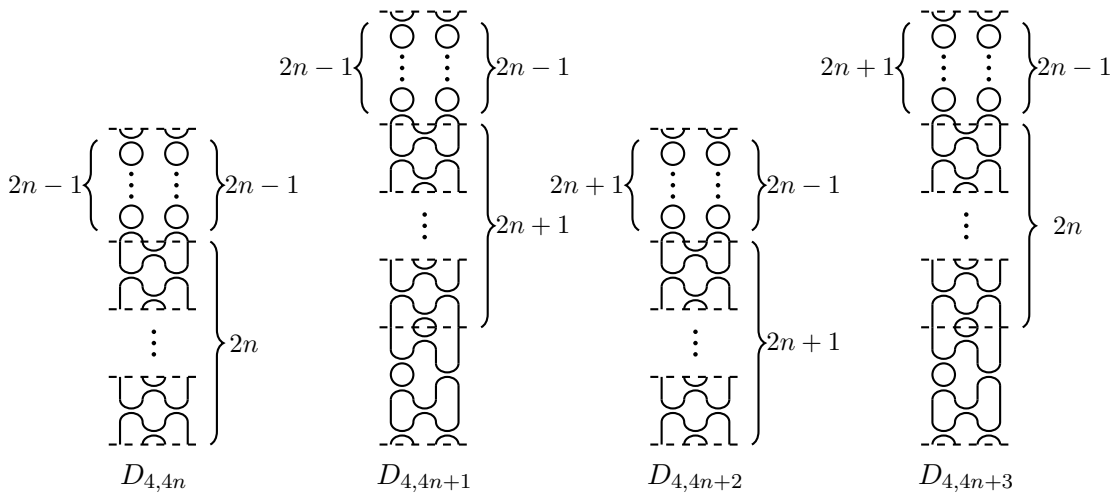
\begin{figure}[h]
$$\begin{tikzpicture}[thick, scale = .3]

\draw[dashed] (-.5,0) -- (3.5,0);

\draw (0,0) -- (0,1);
\draw (1,0) arc (180:0:.5cm and .4cm);
\draw (3,0) -- (3,1);
\draw (0,1) arc (180:0:.5cm and .4cm);
\draw (2,1) arc (180:0:.5cm and .4cm);
\draw (1,1) arc (180:360:.5cm and .4cm);

\begin{scope}[yscale = -1, yshift = -3cm]
	\draw[dashed] (-.5,0) -- (3.5,0);

	\draw (0,0) -- (0,1);
	\draw (1,0) arc (180:0:.5cm and .4cm);
	\draw (3,0) -- (3,1);
	\draw (0,1) arc (180:0:.5cm and .4cm);
	\draw (2,1) arc (180:0:.5cm and .4cm);
	\draw (1,1) arc (180:360:.5cm and .4cm);
\end{scope}

\fill (1.5,4.5) circle (.1cm);
\fill (1.5,4) circle (.1cm);
\fill (1.5,5) circle (.1cm);

\begin{scope}[yshift = 6cm]
\draw[dashed] (-.5,0) -- (3.5,0);

\draw (0,0) -- (0,1);
\draw (1,0) arc (180:0:.5cm and .4cm);
\draw (3,0) -- (3,1);
\draw (0,1) arc (180:0:.5cm and .4cm);
\draw (2,1) arc (180:0:.5cm and .4cm);
\draw (1,1) arc (180:360:.5cm and .4cm);

\begin{scope}[yscale = -1, yshift = -3cm]
	\draw[dashed] (-.5,0) -- (3.5,0);

	\draw (0,0) -- (0,1);
	\draw (1,0) arc (180:0:.5cm and .4cm);
	\draw (3,0) -- (3,1);
	\draw (0,1) arc (180:0:.5cm and .4cm);
	\draw (2,1) arc (180:0:.5cm and .4cm);
	\draw (1,1) arc (180:360:.5cm and .4cm);
\end{scope}
\end{scope}

\draw (0,9) arc (180:0:.5cm and .4cm);
\draw (2,9) arc (180:0:.5cm and .4cm);
\draw (.5,10.1) circle (.5cm);
\draw (2.5,10.1) circle (.5cm);

\fill (.5, 11) circle (.1cm); 
\fill (.5,11.5) circle (.1cm);
\fill (.5, 12) circle (.1cm);

\fill (2.5, 11) circle (.1cm); 
\fill (2.5,11.5) circle (.1cm);
\fill (2.5, 12) circle (.1cm);

\draw (.5, 12.9) circle (.5cm);
\draw (2.5,12.9) circle (.5cm);

\draw (0,14) arc (180:360:.5cm and .4cm);
\draw (2,14) arc (180:360:.5cm and .4cm);
\draw[dashed] (-.5,14) -- (3.5,14);

\draw[decoration={brace,amplitude=5pt},decorate] (-1,9.5) -- (-1,13.5);
\draw (-3.3,11.5) node{\small{$2n-1$}};

\draw[decoration={brace,amplitude=5pt, mirror}, decorate] (4,9.5) -- (4,13.5);
\draw (6.3,11.5) node{\small{$2n-1$}};

\draw[decoration={brace,amplitude=5pt}, decorate] (4,9) -- (4,0);
\draw (5.5,4.5) node{\small{$2n$}};

\draw (1.5,-1.5) node{$D_{4,4n}$};

\begin{scope}[xshift = 12cm]

	\draw[dashed] (-.5,0) -- (3.5,0);
	\draw (0,0) arc (180:0:.5cm and .4cm);
	\draw (2,0) arc (180:0:.5cm and .4cm);
	\draw (0,1) arc (180:360:.5cm and .4cm);
	\draw (2,1) arc (180:360:.5cm and .4cm);
	\draw (1,1) arc (180:0:.5cm and .4cm);
	\draw (0,1) -- (0,2);
	\draw (0,2) arc (180:0:.5cm and .4cm);
	\draw (1,2) arc (180:360:.5cm and .4cm);
	\draw (.5,3.1) circle (.5cm);
	\draw (0, 4.2) arc (180:360:.5cm and .4cm);
	\draw (1,4.2)  arc (180:0:.5cm and .4cm);
	\draw (2,4.2) -- (2,3.6);
	\draw (2,2) -- (2,2.6);
	\draw (2,2.6) arc (180:0:.5cm and .4cm);
	\draw (2,3.6) arc (180:360:.5cm and .4cm);
	\draw (3,2.6) -- (3,1);
	\draw (1,5.2) arc (180:360:.5cm and .4cm);
	\draw (0,4.2) -- (0,5.2);
	\draw (3,3.6) -- (3,5.2);
	

	\begin{scope}[yshift = 5.2 cm]
		\draw[dashed] (-.5,0) -- (3.5,0);

		\draw (0,0) -- (0,1);
		\draw (1,0) arc (180:0:.5cm and .4cm);
		\draw (3,0) -- (3,1);
		\draw (0,1) arc (180:0:.5cm and .4cm);
		\draw (2,1) arc (180:0:.5cm and .4cm);
		\draw (1,1) arc (180:360:.5cm and .4cm);

		\begin{scope}[yscale = -1, yshift = -3cm]
			\draw[dashed] (-.5,0) -- (3.5,0);

			\draw (0,0) -- (0,1);
			\draw (1,0) arc (180:0:.5cm and .4cm);
			\draw (3,0) -- (3,1);
			\draw (0,1) arc (180:0:.5cm and .4cm);				
			\draw (2,1) arc (180:0:.5cm and .4cm);
			\draw (1,1) arc (180:360:.5cm and .4cm);
		\end{scope}

		\fill (1.5,4.5) circle (.1cm);
		\fill (1.5,4) circle (.1cm);
		\fill (1.5,5) circle (.1cm);

		\begin{scope}[yshift = 6cm]
			\draw[dashed] (-.5,0) -- (3.5,0);

			\draw (0,0) -- (0,1);
			\draw (1,0) arc (180:0:.5cm and .4cm);
			\draw (3,0) -- (3,1);
			\draw (0,1) arc (180:0:.5cm and .4cm);
			\draw (2,1) arc (180:0:.5cm and .4cm);
			\draw (1,1) arc (180:360:.5cm and .4cm);

		\begin{scope}[yscale = -1, yshift = -3cm]
			\draw[dashed] (-.5,0) -- (3.5,0);

			\draw (0,0) -- (0,1);
			\draw (1,0) arc (180:0:.5cm and .4cm);
			\draw (3,0) -- (3,1);
			\draw (0,1) arc (180:0:.5cm and .4cm);
			\draw (2,1) arc (180:0:.5cm and .4cm);
			\draw (1,1) arc (180:360:.5cm and .4cm);
		\end{scope}
		\end{scope}

		\draw (0,9) arc (180:0:.5cm and .4cm);
		\draw (2,9) arc (180:0:.5cm and .4cm);
		\draw (.5,10.1) circle (.5cm);
		\draw (2.5,10.1) circle (.5cm);

		\fill (.5, 11) circle (.1cm); 
		\fill (.5,11.5) circle (.1cm);
		\fill (.5, 12) circle (.1cm);

		\fill (2.5, 11) circle (.1cm); 
		\fill (2.5,11.5) circle (.1cm);
		\fill (2.5, 12) circle (.1cm);

		\draw (.5, 12.9) circle (.5cm);
		\draw (2.5,12.9) circle (.5cm);

		\draw (0,14) arc (180:360:.5cm and .4cm);
		\draw (2,14) arc (180:360:.5cm and .4cm);
		\draw[dashed] (-.5,14) -- (3.5,14);

		\draw[decoration={brace,amplitude=5pt},decorate] (-1,9.5) -- (-1,13.5);
		\draw (-3.3,11.5) node{\small{$2n-1$}};

		\draw[decoration={brace,amplitude=5pt, mirror}, decorate] (4,9.5) -- (4,13.5);
		\draw (6.3,11.5) node{\small{$2n-1$}};

		\draw[decoration={brace,amplitude=5pt}, decorate] (4,9) -- (4,0);
		\draw (6.3,4.5) node{\small{$2n+1$}};

	\end{scope}
	
	\draw(1.5,-1.5) node{$D_{4,4n+1}$};

\end{scope}


\begin{scope}[xshift = 24cm]

\draw[dashed] (-.5,0) -- (3.5,0);

\draw (0,0) -- (0,1);
\draw (1,0) arc (180:0:.5cm and .4cm);
\draw (3,0) -- (3,1);
\draw (0,1) arc (180:0:.5cm and .4cm);
\draw (2,1) arc (180:0:.5cm and .4cm);
\draw (1,1) arc (180:360:.5cm and .4cm);

\begin{scope}[yscale = -1, yshift = -3cm]
	\draw[dashed] (-.5,0) -- (3.5,0);

	\draw (0,0) -- (0,1);
	\draw (1,0) arc (180:0:.5cm and .4cm);
	\draw (3,0) -- (3,1);
	\draw (0,1) arc (180:0:.5cm and .4cm);
	\draw (2,1) arc (180:0:.5cm and .4cm);
	\draw (1,1) arc (180:360:.5cm and .4cm);
\end{scope}

\fill (1.5,4.5) circle (.1cm);
\fill (1.5,4) circle (.1cm);
\fill (1.5,5) circle (.1cm);

\begin{scope}[yshift = 6cm]
\draw[dashed] (-.5,0) -- (3.5,0);

\draw (0,0) -- (0,1);
\draw (1,0) arc (180:0:.5cm and .4cm);
\draw (3,0) -- (3,1);
\draw (0,1) arc (180:0:.5cm and .4cm);
\draw (2,1) arc (180:0:.5cm and .4cm);
\draw (1,1) arc (180:360:.5cm and .4cm);

\begin{scope}[yscale = -1, yshift = -3cm]
	\draw[dashed] (-.5,0) -- (3.5,0);

	\draw (0,0) -- (0,1);
	\draw (1,0) arc (180:0:.5cm and .4cm);
	\draw (3,0) -- (3,1);
	\draw (0,1) arc (180:0:.5cm and .4cm);
	\draw (2,1) arc (180:0:.5cm and .4cm);
	\draw (1,1) arc (180:360:.5cm and .4cm);
\end{scope}
\end{scope}

\draw (0,9) arc (180:0:.5cm and .4cm);
\draw (2,9) arc (180:0:.5cm and .4cm);
\draw (.5,10.1) circle (.5cm);
\draw (2.5,10.1) circle (.5cm);

\fill (.5, 11) circle (.1cm); 
\fill (.5,11.5) circle (.1cm);
\fill (.5, 12) circle (.1cm);

\fill (2.5, 11) circle (.1cm); 
\fill (2.5,11.5) circle (.1cm);
\fill (2.5, 12) circle (.1cm);

\draw (.5, 12.9) circle (.5cm);
\draw (2.5,12.9) circle (.5cm);

\draw (0,14) arc (180:360:.5cm and .4cm);
\draw (2,14) arc (180:360:.5cm and .4cm);
\draw[dashed] (-.5,14) -- (3.5,14);

\draw[decoration={brace,amplitude=5pt},decorate] (-1,9.5) -- (-1,13.5);
\draw (-3.3,11.5) node{\small{$2n+1$}};

\draw[decoration={brace,amplitude=5pt, mirror}, decorate] (4,9.5) -- (4,13.5);
\draw (6.3,11.5) node{\small{$2n-1$}};

\draw[decoration={brace,amplitude=5pt}, decorate] (4,9) -- (4,0);
\draw (6.3,4.5) node{\small{$2n+1$}};

\draw (1.5,-1.5) node{$D_{4,4n+2}$};

\end{scope}
\begin{scope}[xshift = 36cm]

	\draw[dashed] (-.5,0) -- (3.5,0);
	\draw (0,0) arc (180:0:.5cm and .4cm);
	\draw (2,0) arc (180:0:.5cm and .4cm);
	\draw (0,1) arc (180:360:.5cm and .4cm);
	\draw (2,1) arc (180:360:.5cm and .4cm);
	\draw (1,1) arc (180:0:.5cm and .4cm);
	\draw (0,1) -- (0,2);
	\draw (0,2) arc (180:0:.5cm and .4cm);
	\draw (1,2) arc (180:360:.5cm and .4cm);
	\draw (.5,3.1) circle (.5cm);
	\draw (0, 4.2) arc (180:360:.5cm and .4cm);
	\draw (1,4.2)  arc (180:0:.5cm and .4cm);
	\draw (2,4.2) -- (2,3.6);
	\draw (2,2) -- (2,2.6);
	\draw (2,2.6) arc (180:0:.5cm and .4cm);
	\draw (2,3.6) arc (180:360:.5cm and .4cm);
	\draw (3,2.6) -- (3,1);
	\draw (1,5.2) arc (180:360:.5cm and .4cm);
	\draw (0,4.2) -- (0,5.2);
	\draw (3,3.6) -- (3,5.2);
	

	\begin{scope}[yshift = 5.2 cm]
		\draw[dashed] (-.5,0) -- (3.5,0);

		\draw (0,0) -- (0,1);
		\draw (1,0) arc (180:0:.5cm and .4cm);
		\draw (3,0) -- (3,1);
		\draw (0,1) arc (180:0:.5cm and .4cm);
		\draw (2,1) arc (180:0:.5cm and .4cm);
		\draw (1,1) arc (180:360:.5cm and .4cm);

		\begin{scope}[yscale = -1, yshift = -3cm]
			\draw[dashed] (-.5,0) -- (3.5,0);

			\draw (0,0) -- (0,1);
			\draw (1,0) arc (180:0:.5cm and .4cm);
			\draw (3,0) -- (3,1);
			\draw (0,1) arc (180:0:.5cm and .4cm);				
			\draw (2,1) arc (180:0:.5cm and .4cm);
			\draw (1,1) arc (180:360:.5cm and .4cm);
		\end{scope}

		\fill (1.5,4.5) circle (.1cm);
		\fill (1.5,4) circle (.1cm);
		\fill (1.5,5) circle (.1cm);

		\begin{scope}[yshift = 6cm]
			\draw[dashed] (-.5,0) -- (3.5,0);

			\draw (0,0) -- (0,1);
			\draw (1,0) arc (180:0:.5cm and .4cm);
			\draw (3,0) -- (3,1);
			\draw (0,1) arc (180:0:.5cm and .4cm);
			\draw (2,1) arc (180:0:.5cm and .4cm);
			\draw (1,1) arc (180:360:.5cm and .4cm);

		\begin{scope}[yscale = -1, yshift = -3cm]
			\draw[dashed] (-.5,0) -- (3.5,0);

			\draw (0,0) -- (0,1);
			\draw (1,0) arc (180:0:.5cm and .4cm);
			\draw (3,0) -- (3,1);
			\draw (0,1) arc (180:0:.5cm and .4cm);
			\draw (2,1) arc (180:0:.5cm and .4cm);
			\draw (1,1) arc (180:360:.5cm and .4cm);
		\end{scope}
		\end{scope}

		\draw (0,9) arc (180:0:.5cm and .4cm);
		\draw (2,9) arc (180:0:.5cm and .4cm);
		\draw (.5,10.1) circle (.5cm);
		\draw (2.5,10.1) circle (.5cm);

		\fill (.5, 11) circle (.1cm); 
		\fill (.5,11.5) circle (.1cm);
		\fill (.5, 12) circle (.1cm);

		\fill (2.5, 11) circle (.1cm); 
		\fill (2.5,11.5) circle (.1cm);
		\fill (2.5, 12) circle (.1cm);

		\draw (.5, 12.9) circle (.5cm);
		\draw (2.5,12.9) circle (.5cm);

		\draw (0,14) arc (180:360:.5cm and .4cm);
		\draw (2,14) arc (180:360:.5cm and .4cm);
		\draw[dashed] (-.5,14) -- (3.5,14);

		\draw[decoration={brace,amplitude=5pt},decorate] (-1,9.5) -- (-1,13.5);
		\draw (-3.3,11.5) node{\small{$2n+1$}};

		\draw[decoration={brace,amplitude=5pt, mirror}, decorate] (4,9.5) -- (4,13.5);
		\draw (6.3,11.5) node{\small{$2n-1$}};

		\draw[decoration={brace,amplitude=5pt}, decorate] (4,9) -- (4,0);
		\draw (6,4.5) node{\small{$2n$}};

	\end{scope}
	
	\draw (1.5,-1.5) node{$D_{4,4n+3}$};

\end{scope}

\end{tikzpicture}$$
\caption{The all-$B$ states of $D_{4,4n+i}$ for $i=0, 1,2$, and $3$.}
\label{figure:B4states}
\end{figure}

\end{proof}

\begin{proposition}
\label{proposition:TuraevT5q}
Let $D_{5,5n+i}$ be the closure of the braid diagrams appearing on the right hand side of Lemma \ref{lemma:T5q} for $i=0,1,2,3,$ and $4$. Then
$$g_T(D_{5,5n}) = g_T(D_{5,5n+1}) = 4n~\text{and}~g_T(D_{5,5n+j}) = 4n+ j -1$$
for $j=2, 3$, and $4$.
\end{proposition}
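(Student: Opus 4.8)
The plan is to follow the template of the proof of Proposition~\ref{proposition:TuraevT4q}: the genus of each Turaev surface is read off from Equation~\ref{equation:g_T(D)}, so the entire task reduces to determining the three quantities $c(D_{5,5n+i})$, $s_A(D_{5,5n+i})$, and $s_B(D_{5,5n+i})$ and substituting. First I would dispose of the two easy quantities. Each $D_{5,5n+i}$ is the closure of a positive braid word on five strands obtained from $(1234)^{5n+i}$ using only the relations \ref{equation:rel1} and \ref{equation:rel2} together with cyclic permutation, and each of these moves preserves the number of letters in a braid word; hence $c(D_{5,5n+i})$ equals the length of $(1234)^{5n+i}$, namely $c(D_{5,5n+i}) = 4(5n+i) = 20n+4i$. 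For the all-$A$ state, the $A$-resolution of a positive crossing $\sigma_j$ is the identity tangle (exactly as in the four-strand case, where $s_A(D_{4,4n+i})=4$), so the all-$A$ state of a positive braid closure on five strands is the closure of the trivial braid. Thus $s_A(D_{5,5n+i}) = 5$ for every $i$.

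The real content is the count of the components of the all-$B$ state, which I would carry out by drawing the all-$B$ states in a figure analogous to Figure~\ref{figure:B4states} and counting loops. Replacing each positive crossing $\sigma_j$ by its $B$-resolution turns each $D_{5,5n+i}$ into a planar family of arcs, and I would exploit the block structure of the braid words from Lemma~\ref{lemma:T5q}: these words are assembled from the repeated blocks $\alpha = 323311$ and $\beta = 31123112334311$, the constant-length pieces $2311$, $234$, $\gamma$, $43$ (and the various prefixes and suffixes), and the closure arcs. The strategy is to record how many circles each repeated block contributes, to track how circles merge across consecutive $\alpha$-blocks and $\beta$-blocks and across the constant pieces, and then to add the boundary contributions coming from the braid closure. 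This bookkeeping should yield
\begin{align*}
s_B(D_{5,5n}) &= 12n-3, \quad s_B(D_{5,5n+1}) = 12n+1, \quad s_B(D_{5,5n+2}) = 12n+3,\\
s_B(D_{5,5n+3}) &= 12n+5, \quad\text{and}\quad s_B(D_{5,5n+4}) = 12n+7.
\end{align*}

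Substituting these values, together with $c(D_{5,5n+i}) = 20n+4i$ and $s_A(D_{5,5n+i})=5$, into Equation~\ref{equation:g_T(D)} gives $g_T(D_{5,5n}) = g_T(D_{5,5n+1}) = 4n$ and $g_T(D_{5,5n+j}) = 4n+j-1$ for $j=2,3,4$, which is the desired conclusion. The main obstacle is the all-$B$ circle count: as in the four-strand case it is an essentially routine but delicate tally of how loops open and close across the many blocks, and the care lies entirely in pinning down the exact additive constants (the differing residues $12n-3,\,12n+1,\,12n+3,\,12n+5,\,12n+7$) produced at the transition pieces and at the closure, rather than in any conceptual difficulty.
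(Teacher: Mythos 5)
Your proposal follows the paper's proof essentially verbatim: the paper also reduces everything to Equation \ref{equation:g_T(D)} with $c(D_{5,5n+i})=20n+4i$ and $s_A(D_{5,5n+i})=5$, and then determines $s_B$ exactly as you describe, by drawing the all-$B$ states of the base cases ($D_{5,5},\dots,D_{5,8}$ and the ``reduced'' states in which $\alpha^{\ell}$ and $\beta^{m}$ are collapsed to single blocks) and showing that each additional $\alpha$ contributes $4$ and each additional $\beta$ contributes $8$ circles, giving the same values $12n-3,\,12n+1,\,12n+3,\,12n+5,\,12n+7$ that you state. The only thing you leave implicit that the paper makes explicit is the separate treatment of $n=1$ (where $\alpha^{n-1}$ and $\beta^{n-1}$ are empty), but your formulas are consistent with that case.
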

\begin{proof}
The diagram $D_{5,5n+i}$ has $20n+4i$ crossings. The number of components in the all-$A$ state of $D_{5,5n+i}$ is $s_A(D_{5,5n+i}) = 5$. Since the expressions for $D_{5,5n+i}$ in Lemma \ref{lemma:T5q} contain $\alpha^{n-1}$ and $\beta^{n-1}$ for $i=0,1,2,$ and $3$, we must handle the case where $n=1$ separately from the case where $n>1$. Figure \ref{figure:B5states1} shows the all-$B$ states of $D_{5,5}$, $D_{5,6}$, $D_{5,7}$, and $D_{5,8}$. From that figure one can see that
$$s_B(D_{5,5}) = 9,~s_B(D_{5,6})=13, ~s_B(D_{5,7}) = 15,~\text{and}~s_B(D_{5,8}) = 17.$$
Equation \ref{equation:g_T(D)} then implies that
$$g_T(D_{5,5}) = g_T(D_{5,4})= 4,~g_T(D_{5,7}) = 5,~\text{and}~g_T(D_{5,8}) = 6.$$

\begin{figure}[h]
$$\begin{tikzpicture}[thick,scale = .3]

\draw (2,-1.5) node{$D_{5,5}$};
\draw[dashed] (-.5, 0) -- (4.5,0);

\draw (0,0) arc (180:0:.5cm and .4cm);
\draw (2,0) arc (180:0:.5cm and .4cm);
\draw (2,1) arc (180:360:.5cm and .4cm);
\draw (3,1) arc (180:0:.5cm and .4cm);
\draw (4,0) -- (4,1);
\draw (3,2) arc (180:360:.5cm and .4cm);
\draw (2,2) arc (180:0:.5cm and .4cm);
\draw (2,2) -- (2,1);
\draw (.5,1.5) circle (.5cm);
\draw (0,3) arc (180:360:.5cm and .4cm);
\draw (1,3)  arc (180:0:.5cm and .4cm);
\draw (2,3) arc (180:360:.5cm and .4cm);
\draw (0,3) -- (0,4);
\draw (0,4) arc (180:0:.5cm and .4cm);
\draw (0.5,5.2) circle (.5cm);
\draw (1,4) arc (180:360:.5cm and .4cm);
\draw (2,5.3) -- (2,4);
\draw (3,5.3) -- (3,3);
\draw (2,5.3) arc (180:0:.5cm and .4cm);
\draw (0,6.3)  arc (180:360:.5cm and .4cm);
\draw (1,6.3) arc (180:0:.5cm and .4cm);
\draw (2,6.3) arc (180:360:.5cm and .4cm);
\draw (1,7.3) arc (180:360:.5cm and .4cm);
\draw (0,6.3) -- (0,7.3);
\draw (3,6.3) -- (3,7.3);
\draw (4,7.3) -- (4,2);
\draw[dashed] (-.5,7.3) -- (4.5,7.3);
\draw (-2,4.2) node{\small{$23112$}};
\draw (-2,3.1) node{\small{$34311$}};

\begin{scope}[yshift = 7.3cm]
\draw (0,0) arc (180:0:.5cm and .4cm);
\draw (2,0) arc (180:0:.5cm and .4cm);
\draw (2,1) arc (180:360:.5cm and .4cm);
\draw (3,1) arc (180:0:.5cm and .4cm);
\draw (4,0) -- (4,1);
\draw (3,2) arc (180:360:.5cm and .4cm);
\draw (2,2) arc (180:0:.5cm and .4cm);
\draw (2,2) -- (2,1);
\draw (.5,1.5) circle (.5cm);
\draw (0,3) arc (180:360:.5cm and .4cm);
\draw (1,3)  arc (180:0:.5cm and .4cm);
\draw (2,3) arc (180:360:.5cm and .4cm);
\draw (0,3) -- (0,4);
\draw (0,4) arc (180:0:.5cm and .4cm);
\draw (0.5,5.2) circle (.5cm);
\draw (1,4) arc (180:360:.5cm and .4cm);
\draw (2,5.3) -- (2,4);
\draw (3,5.3) -- (3,3);
\draw (2,5.3) arc (180:0:.5cm and .4cm);
\draw (0,6.3)  arc (180:360:.5cm and .4cm);
\draw (1,6.3) arc (180:0:.5cm and .4cm);
\draw (2,6.3) arc (180:360:.5cm and .4cm);
\draw (1,7.3) arc (180:360:.5cm and .4cm);
\draw (0,6.3) -- (0,7.3);
\draw (3,6.3) -- (3,7.3);
\draw (4,7.3) -- (4,2);
\draw[dashed] (-.5,7.3) -- (4.5,7.3);
\draw (-2,4.2) node{\small{$23112$}};
\draw (-2,3.1) node{\small{$34311$}};

\end{scope}


\begin{scope}[xshift = 10cm]

\draw[dashed] (-.5,0) -- (4.5,0);

\draw (2,0) arc (180:0:.5cm and .4cm);
\draw (2,1) arc (180:360:.5cm and .4cm);
\draw (3,1) arc (180:0:.5cm and .4cm);
\draw (4,1) -- (4,0);
\draw (2,1) -- (2,2);
\draw (2,2) arc (180:0:.5cm and .4cm);
\draw (3,2) arc (180:360:.5cm and .4cm);
\draw (4,2) -- (4,3);
\draw (2,3) arc (180:360:.5cm and .4cm);
\draw (0,0) -- (0,3);
\draw (1,0) -- (1,3);
\draw (-1.8,1.5) node {\small{$343$}};

\draw[dashed] (-.5,3) -- (4.5,3);
\begin{scope}[yshift=3cm]
	\draw (0,0)  arc (180:0:.5cm and .4cm);
	\draw (.5,1) ellipse (.5cm and .4cm);
	\draw (2,0) -- (2,.5);
	\draw (3,0) -- (3,.5);
	\draw (2,.5) arc (180:0:.5cm and .4cm);
	\draw (2,1.5) arc (180:360:.5cm and .4cm);
	\draw (2,1.5) -- (2,2);
	\draw (1,2) arc (180:0:.5cm and .4cm);
	\draw (0,2) arc (180:360:.5cm and .4cm);
	\draw (0,2) -- (0,3);
	\draw (0,3)  arc (180:0:.5cm and .4cm);
	\draw (1,3) arc (180:360:.5cm and .4cm);
	\draw (2,3) -- (2,3.5);
	\draw (3,3.5) -- (3,1.5);
	\draw (2,3.5) arc  (180:0:.5cm and .4cm);
	
	\begin{scope}[yshift = 3cm]
		\draw (.5,1) ellipse (.5cm and .4cm);
		\draw (2,1.5) arc (180:360:.5cm and .4cm);
		\draw (2,1.5) -- (2,2);
		\draw (1,2) arc (180:0:.5cm and .4cm);
		\draw (0,2) arc (180:360:.5cm and .4cm);
		\draw (0,2) arc (180:360:.5cm and .4cm);
		\draw (0,2) -- (0,3);
		\draw (0,3)  arc (180:0:.5cm and .4cm);
		\draw (1,3) arc (180:360:.5cm and .4cm);
		\draw (2,3) -- (2,3.5);
		\draw (3,3.5) -- (3,1.5);
		\draw (2,3.5) arc  (180:0:.5cm and .4cm);
	\end{scope}
	
	\draw (.5,7) ellipse (.5cm and .4cm);
	
	\draw (0,8) arc (180:360:.5cm and .4cm);
	\draw (2,7.5)  arc (180:360:.5cm and .4cm);
	\draw (2,8) -- (2,7.5);
	\draw (3,8) -- (3,7.5);
	\draw (4,0) -- (4,8);
	
	\draw[dashed] (-.5,8) -- (4.5,8);
	\draw (-1.5,4) node{\small{$\gamma$}};
\end{scope}

\draw (3,11) arc (180:0:.5cm and .4cm);
\draw (2,11) -- (2,12);
\draw (2,12) arc (180:0:.5cm and .4cm);
\draw (3,12)   arc (180:360:.5cm and .4cm);
\draw (2,13) arc (180:360:.5cm and .4cm);
\draw (1,13) arc (180:0:.5cm and .4cm);
\draw (1,13) -- (1,11);
\draw (3,13) -- (3,14);
\draw (2,14) arc (180:0:.5cm and .4cm);
\draw (1,14) arc (180:360:.5cm and .4cm);
\draw (0,14) arc (180:0:.5cm and .4cm);
\draw (0,11) -- (0,14);
\draw (.5,15) ellipse (.5cm and .4cm);
\draw (2.5,15) ellipse (.5cm and .4cm);

\draw (0,16) arc (180:360:.5cm and .4cm);
\draw (1,16) arc (180:0:.5cm and .4cm);
\draw (2,16) arc (180:360:.5cm and .4cm);
\draw (0,16) -- (0,17);
\draw (3,16) -- (3,17);
\draw (0,17) arc (180:0:.5cm and .4cm);
\draw (1,17)  arc (180:360:.5cm and .4cm);
\draw (2,17) arc (180:0:.5cm and .4cm);

\draw (0,18) arc (180:360:.5cm and .4cm);
\draw (2,18) arc (180:360:.5cm and .4cm);
\draw (4,18) -- (4,12);

\draw[dashed] (-.5,18) -- (4.5,18);
\draw (-2,15) node{\small{$13233$}};
\draw (-2,14) node{\small{$11234$}};

\draw (2,-1.5) node{$D_{5,6}$};

\end{scope}

\begin{scope}[xshift = 20cm]

\draw[dashed] (-.5,0) -- (4.5,0);
\draw (-2,2) node{\small{$3433$}};
\draw (2,-1.5) node {$D_{5,7}$};
\draw (2,0)  arc (180:0:.5cm and .4cm);
\draw (2.5,1) ellipse (.5cm and .4cm);
\draw (3,2)  arc (180:0:.5cm and .4cm);
\draw (4,0) -- (4,2);
\draw (2,2) arc (180:360:.5cm and .4cm);
\draw (2,2) -- (2,3);
\draw (2,3) arc (180:0:.5cm and .4cm);
\draw (3,3) arc (180:360:.5cm and .4cm);
\draw (4,3) -- (4,4);
\draw (2,4) arc (180:360:.5cm and .4cm);
\draw (0,0) -- (0,4);
\draw (1,0) -- (1,4);

\draw[dashed] (-.5,4) -- (4.5,4);
\begin{scope}[yshift = 4cm]
\draw (0,0)  arc (180:0:.5cm and .4cm);
	\draw (.5,1) ellipse (.5cm and .4cm);
	\draw (2,0) -- (2,.5);
	\draw (3,0) -- (3,.5);
	\draw (2,.5) arc (180:0:.5cm and .4cm);
	\draw (2,1.5) arc (180:360:.5cm and .4cm);
	\draw (2,1.5) -- (2,2);
	\draw (1,2) arc (180:0:.5cm and .4cm);
	\draw (0,2) arc (180:360:.5cm and .4cm);
	\draw (0,2) -- (0,3);
	\draw (0,3)  arc (180:0:.5cm and .4cm);
	\draw (1,3) arc (180:360:.5cm and .4cm);
	\draw (2,3) -- (2,3.5);
	\draw (3,3.5) -- (3,1.5);
	\draw (2,3.5) arc  (180:0:.5cm and .4cm);
	
	\begin{scope}[yshift = 3cm]
		\draw (.5,1) ellipse (.5cm and .4cm);
		\draw (2,1.5) arc (180:360:.5cm and .4cm);
		\draw (2,1.5) -- (2,2);
		\draw (1,2) arc (180:0:.5cm and .4cm);
		\draw (0,2) arc (180:360:.5cm and .4cm);
		\draw (0,2) arc (180:360:.5cm and .4cm);
		\draw (0,2) -- (0,3);
		\draw (0,3)  arc (180:0:.5cm and .4cm);
		\draw (1,3) arc (180:360:.5cm and .4cm);
		\draw (2,3) -- (2,3.5);
		\draw (3,3.5) -- (3,1.5);
		\draw (2,3.5) arc  (180:0:.5cm and .4cm);
	\end{scope}
	
	\draw (.5,7) ellipse (.5cm and .4cm);
	
	\draw (0,8) arc (180:360:.5cm and .4cm);
	\draw (2,7.5)  arc (180:360:.5cm and .4cm);
	\draw (2,8) -- (2,7.5);
	\draw (3,8) -- (3,7.5);
	\draw (4,0) -- (4,8);
	
	\draw[dashed] (-.5,8) -- (4.5,8);
	\draw (-1.5,4) node{\small{$\gamma$}};

\end{scope}

\begin{scope}[yshift=1cm]
\draw (3,11) arc (180:0:.5cm and .4cm);
\draw (2,11) -- (2,12);
\draw (2,12) arc (180:0:.5cm and .4cm);
\draw (3,12)   arc (180:360:.5cm and .4cm);
\draw (2,13) arc (180:360:.5cm and .4cm);
\draw (1,13) arc (180:0:.5cm and .4cm);
\draw (1,13) -- (1,11);
\draw (3,13) -- (3,14);
\draw (2,14) arc (180:0:.5cm and .4cm);
\draw (1,14) arc (180:360:.5cm and .4cm);
\draw (0,14) arc (180:0:.5cm and .4cm);
\draw (0,11) -- (0,14);
\draw (.5,15) ellipse (.5cm and .4cm);
\draw (2.5,15) ellipse (.5cm and .4cm);

\end{scope}

\draw (0,17) arc (180:360:.5cm and .4cm);
\draw (1,17) arc (180:0:.5cm and .4cm);
\draw (2,17) arc (180:360:.5cm and .4cm);
\draw (3,17) -- (3,18);
\draw (2,18) arc (180:0:.5cm and .4cm);
\draw (1,18) arc (180:360:.5cm and .4cm);
\draw (0,17) -- (0,18.5);
\draw (0,18.5) arc (180:0:.5cm and .4cm);
\draw (1,18.5) -- (1,18);
\draw (2.5,19) ellipse (.5cm and .4cm);
\draw (0,19.5) arc (180:360:.5cm and .4cm);
\draw (0,19.5) -- (0,21);
\draw (1,19.5) -- (1,20);
\draw (0,21)  arc (180:0:.5cm and .4cm);
\draw (1,21) arc (180:360:.5cm and .4cm);
\draw (1,20) arc (180:0:.5cm and .4cm);
\draw (2,20) arc (180:360:.5cm and .4cm);
\draw (0,22) arc (180:360:.5cm and .4cm);
\draw (2,22) -- (2,21);
\draw (3,22) -- (3,20);
\draw (4,22) -- (4,13);

\draw[dashed] (-.5,22) -- (4.5,22);

\draw (-2.2,17.5) node{\small{$1231323$}};
\draw (-2.2,16.5) node{\small{$311234$}};

\end{scope}
\begin{scope}[xshift = 30 cm]
\draw[dashed] (-.5,0) -- (4.5,0);
\draw (-2,2) node{\small{$3433$}};
\draw (2,-1.5) node {$D_{5,8}$};
\draw (2,0)  arc (180:0:.5cm and .4cm);
\draw (2.5,1) ellipse (.5cm and .4cm);
\draw (3,2)  arc (180:0:.5cm and .4cm);
\draw (4,0) -- (4,2);
\draw (2,2) arc (180:360:.5cm and .4cm);
\draw (2,2) -- (2,3);
\draw (2,3) arc (180:0:.5cm and .4cm);
\draw (3,3) arc (180:360:.5cm and .4cm);
\draw (4,3) -- (4,4);
\draw (2,4) arc (180:360:.5cm and .4cm);
\draw (0,0) -- (0,4);
\draw (1,0) -- (1,4);

\draw[dashed] (-.5,4) -- (4.5,4);
\begin{scope}[yshift = 4cm]
\draw (0,0)  arc (180:0:.5cm and .4cm);
	\draw (.5,1) ellipse (.5cm and .4cm);
	\draw (2,0) -- (2,.5);
	\draw (3,0) -- (3,.5);
	\draw (2,.5) arc (180:0:.5cm and .4cm);
	\draw (2,1.5) arc (180:360:.5cm and .4cm);
	\draw (2,1.5) -- (2,2);
	\draw (1,2) arc (180:0:.5cm and .4cm);
	\draw (0,2) arc (180:360:.5cm and .4cm);
	\draw (0,2) -- (0,3);
	\draw (0,3)  arc (180:0:.5cm and .4cm);
	\draw (1,3) arc (180:360:.5cm and .4cm);
	\draw (2,3) -- (2,3.5);
	\draw (3,3.5) -- (3,1.5);
	\draw (2,3.5) arc  (180:0:.5cm and .4cm);
	
	\begin{scope}[yshift = 3cm]
		\draw (.5,1) ellipse (.5cm and .4cm);
		\draw (2,1.5) arc (180:360:.5cm and .4cm);
		\draw (2,1.5) -- (2,2);
		\draw (1,2) arc (180:0:.5cm and .4cm);
		\draw (0,2) arc (180:360:.5cm and .4cm);
		\draw (0,2) arc (180:360:.5cm and .4cm);
		\draw (0,2) -- (0,3);
		\draw (0,3)  arc (180:0:.5cm and .4cm);
		\draw (1,3) arc (180:360:.5cm and .4cm);
		\draw (2,3) -- (2,3.5);
		\draw (3,3.5) -- (3,1.5);
		\draw (2,3.5) arc  (180:0:.5cm and .4cm);
	\end{scope}
	
	\draw (.5,7) ellipse (.5cm and .4cm);
	
	\draw (0,8) arc (180:360:.5cm and .4cm);
	\draw (2,7.5)  arc (180:360:.5cm and .4cm);
	\draw (2,8) -- (2,7.5);
	\draw (3,8) -- (3,7.5);
	\draw (4,0) -- (4,8);
	
	\draw[dashed] (-.5,8) -- (4.5,8);
	\draw (-1.5,4) node{\small{$\gamma$}};

\end{scope}

\begin{scope}[yshift=1cm]
\draw (3,11) arc (180:0:.5cm and .4cm);
\draw (2,11) -- (2,12);
\draw (2,12) arc (180:0:.5cm and .4cm);
\draw (3,12)   arc (180:360:.5cm and .4cm);
\draw (2,13) arc (180:360:.5cm and .4cm);
\draw (1,13) arc (180:0:.5cm and .4cm);
\draw (1,13) -- (1,11);
\draw (3,13) -- (3,14);
\draw (2,14) arc (180:0:.5cm and .4cm);
\draw (1,14) arc (180:360:.5cm and .4cm);
\draw (0,14) arc (180:0:.5cm and .4cm);
\draw (0,11) -- (0,14);
\draw (.5,15) ellipse (.5cm and .4cm);
\draw (2.5,15) ellipse (.5cm and .4cm);

\end{scope}

\draw (0,17) arc (180:360:.5cm and .4cm);
\draw (1,17) arc (180:0:.5cm and .4cm);
\draw (2,17) arc (180:360:.5cm and .4cm);
\draw (3,17) -- (3,18);
\draw (2,18) arc (180:0:.5cm and .4cm);
\draw (1,18) arc (180:360:.5cm and .4cm);
\draw (0,17) -- (0,18.5);
\draw (0,18.5) arc (180:0:.5cm and .4cm);
\draw (1,18.5) -- (1,18);
\draw (2.5,19) ellipse (.5cm and .4cm);
\draw (0,19.5) arc (180:360:.5cm and .4cm);
\draw (0,19.5) -- (0,21);
\draw (1,19.5) -- (1,20);
\draw (0,21)  arc (180:0:.5cm and .4cm);
\draw (1,21) arc (180:360:.5cm and .4cm);
\draw (1,20) arc (180:0:.5cm and .4cm);

\draw(.5,22) ellipse (.5cm and .4cm);
\draw (2,20) arc (180:360:.5cm and .4cm);
\draw (3,20) -- (3,21.5);
\draw (2,21) -- (2,21.5);
\draw (2,21.5) arc (180:0:.5cm and .4cm);
\draw (2,22.5) arc (180:360:.5cm and .4cm);
\draw (2,22.5) -- (2,23);
\draw (3,22.5) --(3,23);
\draw (1,23) arc (180:0:.5cm and .4cm);
\draw (0,23)  arc (180:360:.5cm and .4cm);
\draw (0,23) -- (0,24);
\draw (0,24)  arc (180:0:.5cm and .4cm);
\draw (1,24) arc (180:360:.5cm and .4cm);
\draw (2,24) -- (2,25);
\draw (3,25) -- (3,22.5);
\draw (0,25) arc (180:360:.5cm and .4cm);
\draw (4,25) -- (4,13);

\draw[dashed] (-.5,25) -- (4.5,25);

\draw (-2,19.5) node{\small{$121312$}};
\draw (-2,18.5) node{\small{$313233$}};
\draw (-2,17.5) node{\small{$11234$}};

\end{scope}

\end{tikzpicture}$$
\caption{The all-$B$ states of $D_{5,5+i}$ for $i=0, 1,2$, and $3$.}
\label{figure:B5states1}
\end{figure}
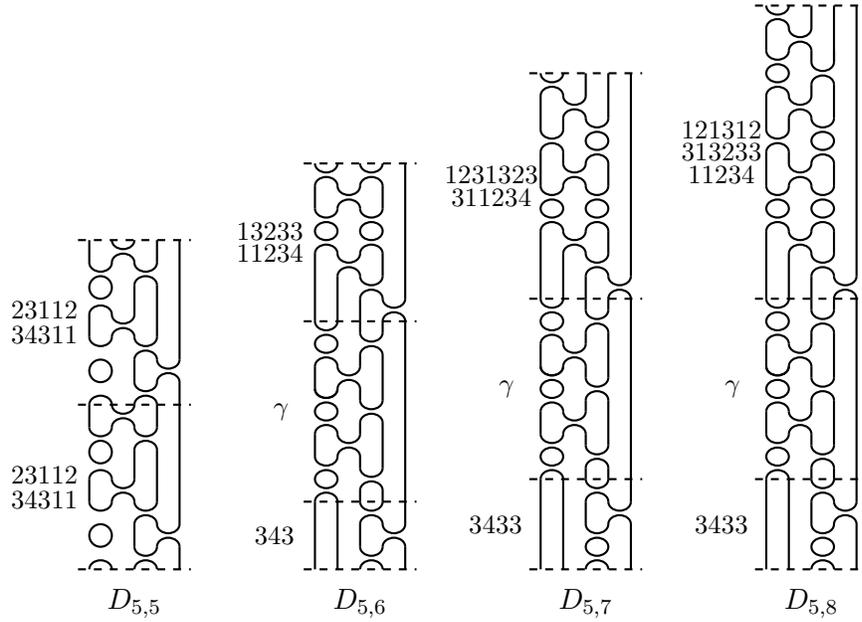

If $q\ge 9$, then $D_{5,q}$ contains at least one $\alpha$ and $\beta$ as a sub-word in its braid word. Figure \ref{figure:stack} shows that consecutive $\alpha$ words add $4$ components to the all-$B$ state and consecutive $\beta$ words add $8$ components to the all-$B$ state. This observation allows us to compute the number of components in the all-$B$ state of $D_{5,q}$ by only considering braid words with $\alpha$ and $\beta$ appearing once. A braid word where $\alpha^{\ell}$ and $\beta^m$ are replaced by $\alpha$ and $\beta$ respectively will be called {\em reduced}.

\begin{figure}[h]
$$\begin{tikzpicture}[thick,scale=.3]

\draw[dashed] (-.5,0) -- (4.5,0);
\draw (2,-1.5) node{$\alpha^2$};

\draw (0,0) arc (180:0:.5cm and .4cm);
\draw (2,0) arc (180:0:.5cm and .4cm);
\draw (.5,1) ellipse (.5cm and .4cm);
\draw (2.5,1) ellipse (.5cm and .4cm);

\draw (0,2)  arc (180:360:.5cm and .4cm);
\draw (1,2)  arc (180:0:.5cm and .4cm);
\draw (2,2) arc (180:360:.5cm and .4cm);
\draw (3,2) -- (3,3);
\draw (2,3) arc (180:0:.5cm and .4cm);
\draw (1,3) arc (180:360:.5cm and .4cm);
\draw (0,2) -- (0,4);
\draw (1,3) -- (1,4);
\draw (2,4) arc (180:360:.5cm and .4cm);
\draw (4,0) -- (4,4);
\draw (-1.5,2) node{\small{$\alpha$}};

\begin{scope}[yshift = 4cm]
\draw[dashed] (-.5,0) -- (4.5,0);

\draw (0,0) arc (180:0:.5cm and .4cm);
\draw (2,0) arc (180:0:.5cm and .4cm);
\draw (.5,1) ellipse (.5cm and .4cm);
\draw (2.5,1) ellipse (.5cm and .4cm);

\draw (0,2)  arc (180:360:.5cm and .4cm);
\draw (1,2)  arc (180:0:.5cm and .4cm);
\draw (2,2) arc (180:360:.5cm and .4cm);
\draw (3,2) -- (3,3);
\draw (2,3) arc (180:0:.5cm and .4cm);
\draw (1,3) arc (180:360:.5cm and .4cm);
\draw (0,2) -- (0,4);
\draw (1,3) -- (1,4);
\draw (2,4) arc (180:360:.5cm and .4cm);
\draw (4,0) -- (4,4);
\draw (-1.5,2) node{\small{$\alpha$}};

\draw[dashed] (-.5,4) -- (4.5,4);
\end{scope}

\begin{scope}[xshift = 10cm]

\draw[dashed] (-.5,0) -- (4.5,0);
\draw (2,-1.5) node {$\beta^2$};
\draw (2,0) arc (180:0:.5cm and .4cm);
\draw (2,1) arc (180:360:.5cm and .4cm);
\draw (3,1) arc (180:0:.5cm and .4cm);
\draw (4,1) -- (4,0);

\draw (2,1) -- (2,2);
\draw (.5,1.5) ellipse (.5cm and .4cm);

\draw (0,0) -- (0,.5);
\draw (1,0) -- (1,.5);
\draw (0,.5) arc (180:0:.5cm and .4cm);

\draw (0, 2.5) arc (180:360:.5cm and .4cm);
\draw (2,2) arc (180:0:.5cm and .4cm);
\draw (3,2) arc (180:360:.5cm and .4cm);
\draw (2.5,3) ellipse (.5cm and .4cm);

\draw (2,4)  arc (180:360:.5cm and .4cm);
\draw (1,4) arc (180:0:.5cm and .4cm);
\draw (1,4) -- (1,2.5);
\draw (0,2.5) -- (0,5);
\draw (0,5) arc (180:0:.5cm and .4cm);
\draw (1,5) arc (180:360:.5cm and .4cm);
\draw (2,5) -- (2,5.5);
\draw (2,5.5) arc (180:0:.5cm and .4cm);
\draw (3,5.5) -- (3,4);

\draw (0.5, 6) ellipse (.5cm and .4cm);

\draw (2,6.5) arc (180:360:.5cm and .4cm);
\draw (2,6.5) -- (2,7);
\draw (3,6.5) -- (3,8.5);
\draw (0,7) arc (180:360:.5cm and .4cm);
\draw (1,7) arc (180:0:.5cm and .4cm);
\draw (0,7) -- (0,8);
\draw (0,8) arc (180:0:.5cm and .4cm);
\draw (1,8) arc (180:360:.5cm and .4cm);
\draw (.5,9) ellipse (.5cm and .4cm);
\draw (0,10) arc (180:360:.5cm and .4cm);
\draw (2,10) -- (2,9.5);
\draw (3,10) -- (3,9.5);
\draw (2,9.5) arc (180:360:.5cm and .4cm);
\draw (2,8) -- (2,8.5);
\draw (2,8.5) arc (180:0:.5cm and .4cm);
\draw (4,2) -- (4,10);
\draw (-1.5,5) node{\small{$\beta$}};

\begin{scope}[yshift = 10 cm]
\draw[dashed] (-.5,0) -- (4.5,0);
\draw (2,0) arc (180:0:.5cm and .4cm);
\draw (2,1) arc (180:360:.5cm and .4cm);
\draw (3,1) arc (180:0:.5cm and .4cm);
\draw (4,1) -- (4,0);

\draw (2,1) -- (2,2);
\draw (.5,1.5) ellipse (.5cm and .4cm);

\draw (0,0) -- (0,.5);
\draw (1,0) -- (1,.5);
\draw (0,.5) arc (180:0:.5cm and .4cm);

\draw (0, 2.5) arc (180:360:.5cm and .4cm);
\draw (2,2) arc (180:0:.5cm and .4cm);
\draw (3,2) arc (180:360:.5cm and .4cm);
\draw (2.5,3) ellipse (.5cm and .4cm);

\draw (2,4)  arc (180:360:.5cm and .4cm);
\draw (1,4) arc (180:0:.5cm and .4cm);
\draw (1,4) -- (1,2.5);
\draw (0,2.5) -- (0,5);
\draw (0,5) arc (180:0:.5cm and .4cm);
\draw (1,5) arc (180:360:.5cm and .4cm);
\draw (2,5) -- (2,5.5);
\draw (2,5.5) arc (180:0:.5cm and .4cm);
\draw (3,5.5) -- (3,4);

\draw (0.5, 6) ellipse (.5cm and .4cm);

\draw (2,6.5) arc (180:360:.5cm and .4cm);
\draw (2,6.5) -- (2,7);
\draw (3,6.5) -- (3,8.5);
\draw (0,7) arc (180:360:.5cm and .4cm);
\draw (1,7) arc (180:0:.5cm and .4cm);
\draw (0,7) -- (0,8);
\draw (0,8) arc (180:0:.5cm and .4cm);
\draw (1,8) arc (180:360:.5cm and .4cm);
\draw (.5,9) ellipse (.5cm and .4cm);
\draw (0,10) arc (180:360:.5cm and .4cm);
\draw (2,10) -- (2,9.5);
\draw (3,10) -- (3,9.5);
\draw (2,9.5) arc (180:360:.5cm and .4cm);
\draw (2,8) -- (2,8.5);
\draw (2,8.5) arc (180:0:.5cm and .4cm);
\draw (4,2) -- (4,10);
\draw (-1.5,5) node{\small{$\beta$}};

\draw[dashed] (-.5,10) -- (4.5,10);

\end{scope}

\end{scope}

\end{tikzpicture}$$
\caption{Consecutive $\alpha$ braids adds $3$ components to the all-$B$ state. Consecutive $\beta$ braids adds $8$ components to the all-$B$ state.}
\label{figure:stack}
\end{figure}
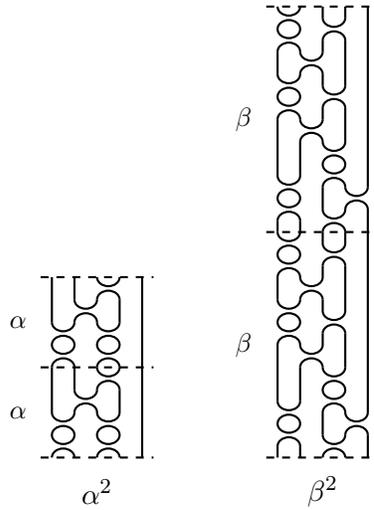

Figure \ref{figure:B5states2} shows the reduced all-$B$ states of $D_{5,q}$ for $q\geq 9$. If $n\geq 2$, then the reduced all-$B$ state of $D_{5,5n}$ has $21$ components. Hence $s_B(D_{5,5n}) = 21 + 12(n-2) = 12n-3$. Equation \ref{equation:g_T(D)} implies that $g_T(D_{5,5n})=4n$. If $n\geq 2$, then the reduced all-$B$ state of $D_{5,5n+1}$ has $25$ components. Hence $s_B(D_{5,5n+1}) = 25 + 12(n-2) = 12n+1$.  Equation \ref{equation:g_T(D)} implies that $g_T(D_{5,5n+1})=4n$. If $n\geq 2$, then the reduced all-$B$ state of $D_{5,5n+2}$ has $27$ components. Hence $s_B(D_{5,5n+2}) = 27+12(n-2)=12n+3$.  Equation \ref{equation:g_T(D)} implies that $g_T(D_{5,5n+2})=4n+1$. If $n\geq 2$, then the reduced all-$B$ state of $D_{5,5n+3}$ has $29$ components. Hence $s_B(D_{5,5n+3}) = 29+12(n-2)= 12n+5$.  Equation \ref{equation:g_T(D)} implies that $g_T(D_{5,5n+3})=4n+2$. Finally, if $n\geq 1$, then the reduced all-$B$ state of $D_{5,5n+4}$ has $19$ components. Hence $s_B(D_{5,5n+4}) = 19+12(n-1) = 12n+7$.  Equation \ref{equation:g_T(D)} implies that $g_T(D_{5,5n+4})=4n+3$.
\begin{figure}[h]
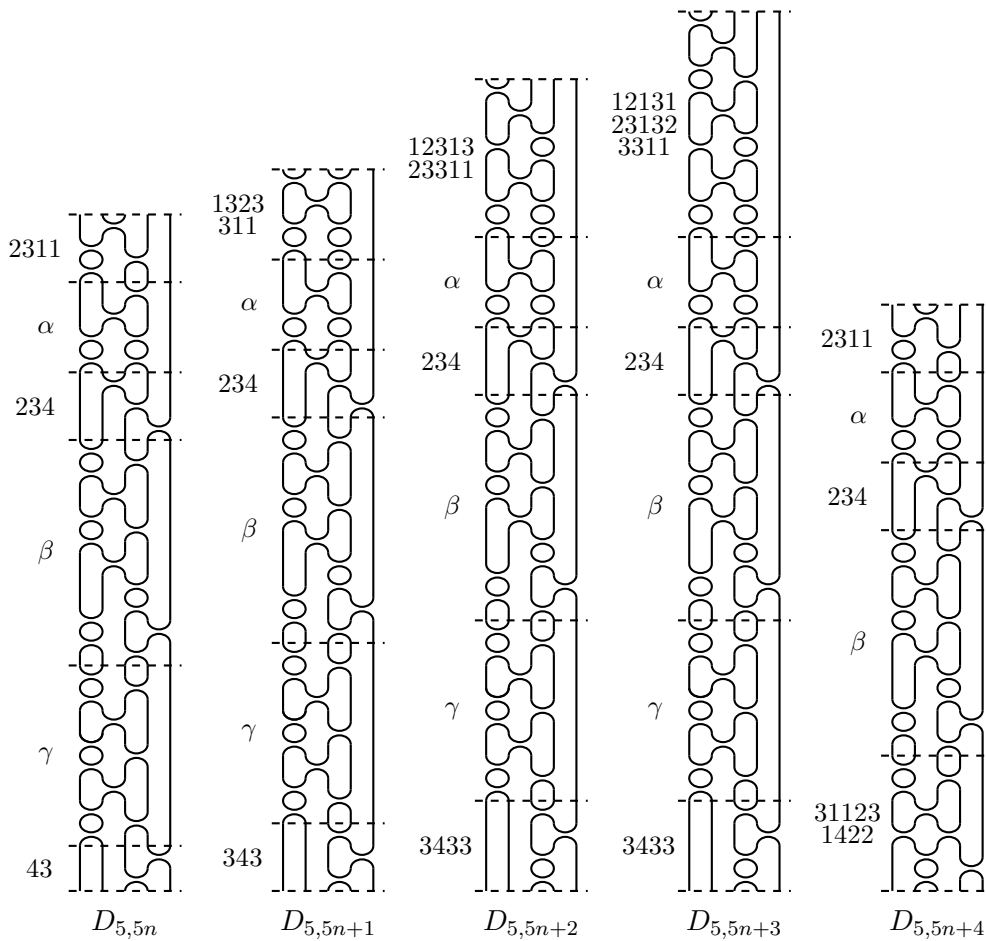

$$
$$

\caption{The reduced all-$B$ states of $D_{5,5n+i}$ for $i=0, 1,2,3$, and $4$.}
\label{figure:B5states2}
\end{figure}
\end{proof}

\begin{proposition}
\label{proposition:TuraevT6q}
Let $D_{6,6n}$ and $D_{6,6n+1}$ be the closure of the braid diagrams appearing on the right hand side of Lemma \ref{lemma:T6q}. Then
$$g_T(D_{6,6n}) = g_T(D_{6,6n+1}) = 6n.$$
\end{proposition}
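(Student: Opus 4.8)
The plan is to follow the template of Propositions \ref{proposition:TuraevT4q} and \ref{proposition:TuraevT5q}: for each diagram I would compute the three quantities $c(D)$, $s_A(D)$, and $s_B(D)$ and then invoke Equation \ref{equation:g_T(D)}. Since every braid relation and every commutation of generators preserves the number of letters in a braid word, the diagrams $D_{6,6n}$ and $D_{6,6n+1}$ obtained in Lemma \ref{lemma:T6q} have the same number of crossings as the standard torus braids $(12345)^{6n}$ and $(12345)^{6n+1}$. Counting letters in the words of Lemma \ref{lemma:T6q} (recall $\zeta$ has fifteen letters and $\eta$ has nine) gives $c(D_{6,6n}) = 30n$ and $c(D_{6,6n+1}) = 30n + 5$. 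Exactly as in the earlier propositions, the all-$A$ state of the closed positive braid $D_{6,q}$ consists of its six Seifert circles, so $s_A(D_{6,6n}) = s_A(D_{6,6n+1}) = 6$.

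With these values, Equation \ref{equation:g_T(D)} shows that the desired equalities $g_T(D_{6,6n}) = g_T(D_{6,6n+1}) = 6n$ are equivalent to the component counts
\begin{equation*}
s_B(D_{6,6n}) = 18n - 4 \quad\text{and}\quad s_B(D_{6,6n+1}) = 18n + 1,
\end{equation*}
so the whole proof reduces to establishing these two formulas. To do so I would draw the all-$B$ states explicitly, as in Figures \ref{figure:B5states1} and \ref{figure:B5states2}. The words of Lemma \ref{lemma:T6q} are built from the repeated blocks $(\zeta 323)^{n-1}$ and $(\eta 343)^{n-1}$ flanked by fixed prefixes and suffixes. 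Mirroring the ``reduced state'' device used for $\alpha$ and $\beta$ in Proposition \ref{proposition:TuraevT5q} (compare Figure \ref{figure:stack}), I would verify that each additional $\zeta 323$ block and each additional $\eta 343$ block contributes a fixed number of circles to the all-$B$ state, independent of $n$, with the two contributions summing to $18$. This reduces the count to a single reduced all-$B$ state, whose circles I would tally directly, after which $s_B$ for general $n$ follows by adding $18$ for each increment of $n$.

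Because the exponents $(\,\cdot\,)^{n-1}$ degenerate at $n = 1$, the base diagrams $D_{6,6}$ and $D_{6,7}$ (that is, $T_{6,6}$ and $T_{6,7}$) must be treated directly from their all-$B$ states, as was done for $D_{5,5},\dots,D_{5,8}$; one checks these agree with $s_B = 14$ and $s_B = 19$. Once the reduced counts and the small cases match the formulas above, Equation \ref{equation:g_T(D)} gives $g_T(D_{6,6n}) = g_T(D_{6,6n+1}) = 6n$.

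The main obstacle is the delicate combinatorics of drawing and counting these all-$B$ states: the words $\zeta$ and $\eta$ are long, so the states are intricate. The crux is establishing that consecutive $\zeta 323$ blocks, and consecutive $\eta 343$ blocks, each add a constant number of circles; this requires checking that the smoothing pattern at the seam between successive blocks --- and at the junctions where the repeated portion meets the fixed prefix and suffix --- stabilizes, so that no additional merges or splits of circles occur as $n$ increases. That stabilization is precisely what produces the clean linear growth by $18$, and it is the step most prone to a miscount.
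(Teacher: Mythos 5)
Your proposal follows essentially the same route as the paper's proof: the paper likewise records $c(D_{6,6n+i})=30n+5i$ and $s_A=6$, treats $D_{6,6}$ and $D_{6,7}$ separately (obtaining $s_B=14$ and $19$ from their all-$B$ states), and for $n\geq 2$ uses reduced all-$B$ states, checking that consecutive $\zeta 323$ blocks add $10$ circles and consecutive $\eta 343$ blocks add $8$, for a total increment of $18$ per full twist, which yields $s_B(D_{6,6n})=18n-4$ and $s_B(D_{6,6n+1})=18n+1$ and hence the claimed genera via Equation \ref{equation:g_T(D)}. The only content you defer --- tallying the reduced states and verifying the stabilization at the block seams --- is exactly what the paper supplies by explicit figures, so your outline is correct and complete in structure.
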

\begin{proof}
For $i=0$ and $1$, the diagram $D_{6,6n+i}$ has $30n+5i$ crossings. The number of components in the all-$A$ state of $D_{6,6n+i}$ is $s_A(D_{6,6n+i})= 6$. Since the expressions for $D_{6,6n+i}$ in Lemma \ref{lemma:T6q} contain $(\alpha323)^{n-1}$ and $(\beta343)^{n-1}$, we must handle the case where $n=1$ separately from the case where $n>1$. Figure \ref{figure:B6states1} shows the all-$B$ states of $D_{6,6}$ and $D_{6,7}$. From that figure one can see that
$$s_B(D_{6,6}) = 14,~\text{and}~s_B(D_{6,7})=19$$
Equation \ref{equation:g_T(D)} then implies that
$$g_T(D_{6,6}) = g_T(D_{6,7})= 6.$$
\begin{figure}[h]
$$\begin{tikzpicture}[thick, scale = .3, rotate around={90:(0,0)}]

\draw (2,0) arc (180:0:.5cm and .4cm);
\draw (4,0) -- (4,1);
\draw (3,1) arc (180:0:.5cm and .4cm);
\draw (2,1) arc (180:360:.5cm and .4cm);
\draw (2,1) -- (2,2);
\draw (2,2) arc (180:0:.5cm and .4cm);
\draw (3,2)  arc (180:360:.5cm and .4cm);
\draw (0,0) -- (0,2);
\draw (1,0) -- (1,2);
\draw (0,2)  arc (180:0:.5cm and .4cm);
\draw (0,3) arc (180:360:.5cm and .4cm);
\draw (1,3)  arc (180:0:.5cm and .4cm);
\draw (2,3) arc (180:360:.5cm and .4cm);
\draw (0,3) -- (0,4);
\draw (3,3) -- (3,4);
\draw (0,4) arc (180:0:.5cm and .4cm);
\draw (1,4) arc (180:360:.5cm and .4cm);
\draw (2,4)  arc (180:0:.5cm and .4cm);
\draw (4,4) arc (180:0:.5cm and .4cm);
\draw (4,4) -- (4,2);
\draw (5,4) -- (5,0);
\draw (.5,5) ellipse (.5cm and .4cm);
\draw (2.5,5) ellipse (.5cm and .4cm);
\draw (4.5,5) ellipse (.5cm and .4cm);

\draw (2,6) arc (180:360:.5cm and .4cm);
\draw (3,6) arc (180:0:.5cm and .4cm);
\draw (4,6) arc (180:360:.5cm and .4cm);
\draw (2,6) -- (2,7);
\draw (5,6) -- (5,9);
\draw (2,7) arc (180:0:.5cm and .4cm);
\draw (2,8) arc (180:360:.5cm and .4cm);
\draw (1,8)  arc (180:0:.5cm and .4cm);
\draw (0,6) arc (180:360:.5cm and .4cm);
\draw (1,6)--(1,8);
\draw (3,8) -- (3,9);
\draw (2,9) arc (180:0:.5cm and .4cm);
\draw (1,9) arc (180:360:.5cm and .4cm);
\draw (0,9) arc (180:0:.5cm and .4cm);
\draw (0,6) -- (0,9);
\draw (3,7)  arc (180:360:.5cm and .4cm);
\draw (4,7) -- (4,9);
\draw (4,9) arc (180:0:.5cm and .4cm);

\draw (4,10)  arc (180:360:.5cm and .4cm);
\draw (3,10) arc (180:0:.5cm and .4cm);
\draw (2,10) arc (180:360:.5cm and .4cm);
\draw (2,10) -- (2,11);
\draw (2,11) arc (180:0:.5cm and .4cm);
\draw (3,11) arc (180:360:.5cm and .4cm);
\draw (2.5,12) ellipse(.5cm and .4cm);
\draw (0,10) arc (180:360:.5cm and .4cm);
\draw (1,10) --(1,13);
\draw (1,13)  arc (180:0:.5cm and .4cm);
\draw (2,13) arc (180:360:.5cm and .4cm);
\draw (3,13) -- (3,14);
\draw (2,14) arc (180:0:.5cm and .4cm);
\draw (1,14)  arc (180:360:.5cm and .4cm);
\draw (2.5,15) ellipse(.5cm and .4cm);

\draw (4,11) -- (4,15);
\draw (5,10) -- (5,15);
\draw (4,15) arc (180:0:.5cm and .4cm);

\draw (4,16) arc (180:360:.5cm and .4cm);
\draw (5,17) -- (5,16);
\draw (3,16)  arc (180:0:.5cm and .4cm);
\draw (2,16) arc (180:360:.5cm and .4cm);
\draw (2,17) -- (2,16);
\draw (2,17) arc (180:0:.5cm and .4cm);
\draw (3,17) arc (180:360:.5cm and .4cm);
\draw (4,17) arc (180:0:.5cm and .4cm);
\draw (2.5,18)  ellipse(.5cm and .4cm);
\draw (2,19) arc (180:360:.5cm and .4cm);
\draw (1,19) arc (180:0:.5cm and .4cm);
\draw (0,19) arc (180:360:.5cm and .4cm);
\draw (0,18) arc (180:0:.5cm and .4cm);
\draw (0,18) -- (0,10);
\draw (1,18) -- (1,14);
\draw (1,20) arc (180:360:.5cm and .4cm);
\draw (0,19) -- (0,20);
\draw (3,19) -- (3,20);
\draw (4,20) -- (4,18);
\draw (5,20) -- (5,18);
\draw (4,18) arc (180:360:.5cm and .4cm);

\draw[dashed] (-.5,0) -- (5.5,0);
\draw[dashed] (-.5,20) -- (5.5,20);

\draw[dashed] (-.5,2) -- (5.5,2);
\draw[dashed] (-.5,6) -- (5.5,6);
\draw[dashed] (-.5,9) -- (5.5,9);
\draw[dashed] (-.5,19) -- (5.5,19);

\draw (6.5,1) node{\small{$43$}};
\draw (6.5,4) node{\small{$\eta$}};
\draw (6.5, 7.5) node{\small{$234$}};
\draw (6.5,14) node{\small{$\zeta$}};
\draw (6.5,19.5) node{\small{$2$}};

\draw (3,22) node{$D_{6,6}$};


\begin{scope}[xshift = -10cm, yshift =-2cm]

\draw (2,-1) arc (180:0:.5cm and .4cm);
\draw (3,0) arc (180:0:.5cm and .4cm);
\draw (2,0) arc (180:360:.5cm and .4cm);
\draw (2,1) -- (2,0);
\draw (2,1) arc (180:0:.5cm and .4cm);
\draw (3,1)  arc (180:360:.5cm and .4cm);
\draw (2.5,2) ellipse (.5cm and .4cm);
\draw (0,0) -- (0,2);
\draw (1,0) -- (1,2);
\draw (0,2)  arc (180:0:.5cm and .4cm);
\draw (0,3) arc (180:360:.5cm and .4cm);
\draw (1,3)  arc (180:0:.5cm and .4cm);
\draw (2,3) arc (180:360:.5cm and .4cm);
\draw (0,3) -- (0,4);
\draw (3,3) -- (3,4);
\draw (0,4) arc (180:0:.5cm and .4cm);
\draw (1,4) arc (180:360:.5cm and .4cm);
\draw (2,4)  arc (180:0:.5cm and .4cm);
\draw (4,4) arc (180:0:.5cm and .4cm);
\draw (4,4) -- (4,1);
\draw (4,0) -- (4,-1);
\draw (5,4) -- (5,-1);
\draw (.5,5) ellipse (.5cm and .4cm);
\draw (2.5,5) ellipse (.5cm and .4cm);
\draw (4.5,5) ellipse (.5cm and .4cm);

\draw (2,6) arc (180:360:.5cm and .4cm);
\draw (3,6) arc (180:0:.5cm and .4cm);
\draw (4,6) arc (180:360:.5cm and .4cm);
\draw (2,6) -- (2,7);
\draw (5,6) -- (5,9);
\draw (2,7) arc (180:0:.5cm and .4cm);
\draw (2,8) arc (180:360:.5cm and .4cm);
\draw (1,8)  arc (180:0:.5cm and .4cm);
\draw (0,6) arc (180:360:.5cm and .4cm);
\draw (1,6)--(1,8);
\draw (3,8) -- (3,9);
\draw (2,9) arc (180:0:.5cm and .4cm);
\draw (1,9) arc (180:360:.5cm and .4cm);
\draw (0,9) arc (180:0:.5cm and .4cm);
\draw (0,6) -- (0,9);
\draw (3,7)  arc (180:360:.5cm and .4cm);
\draw (4,7) -- (4,9);
\draw (4,9) arc (180:0:.5cm and .4cm);

\draw (4,10)  arc (180:360:.5cm and .4cm);
\draw (3,10) arc (180:0:.5cm and .4cm);
\draw (2,10) arc (180:360:.5cm and .4cm);
\draw (2,10) -- (2,11);
\draw (2,11) arc (180:0:.5cm and .4cm);
\draw (3,11) arc (180:360:.5cm and .4cm);
\draw (2.5,12) ellipse(.5cm and .4cm);
\draw (0,10) arc (180:360:.5cm and .4cm);
\draw (1,10) --(1,13);
\draw (1,13)  arc (180:0:.5cm and .4cm);
\draw (2,13) arc (180:360:.5cm and .4cm);
\draw (3,13) -- (3,14);
\draw (2,14) arc (180:0:.5cm and .4cm);
\draw (1,14)  arc (180:360:.5cm and .4cm);
\draw (2.5,15) ellipse(.5cm and .4cm);

\draw (4,11) -- (4,15);
\draw (5,10) -- (5,15);
\draw (4,15) arc (180:0:.5cm and .4cm);

\draw (4,16) arc (180:360:.5cm and .4cm);
\draw (5,17) -- (5,16);
\draw (3,16)  arc (180:0:.5cm and .4cm);
\draw (2,16) arc (180:360:.5cm and .4cm);
\draw (2,17) -- (2,16);
\draw (2,17) arc (180:0:.5cm and .4cm);
\draw (3,17) arc (180:360:.5cm and .4cm);
\draw (4,17) arc (180:0:.5cm and .4cm);
\draw (2.5,18)  ellipse(.5cm and .4cm);
\draw (2.5,19) ellipse(.5cm and .4cm);
\draw (2,20)  arc (180:360:.5cm and .4cm);
\draw (1,20) arc (180:0:.5cm and .4cm);

\draw (0,19) arc (180:360:.5cm and .4cm);
\draw (0,18) arc (180:0:.5cm and .4cm);
\draw (0,18) -- (0,10);
\draw (1,18) -- (1,14);

\draw (1,19) -- (1,20);
\draw (0,19) -- (0,21);
\draw (0,21) arc (180:0:.5cm and .4cm);
\draw (1,21) arc (180:360:.5cm and .4cm);
\draw (2,21) arc (180:0:.5cm and .4cm);
\draw (3,21) -- (3,20);
\draw (0,22) arc (180:360:.5cm and .4cm);
\draw (2,22) arc (180:360:.5cm and .4cm);

\draw (4,22) -- (4,18);
\draw (5,22) -- (5,18);
\draw (4,18) arc (180:360:.5cm and .4cm);

\draw(4,-1)  arc (180:360:.5cm and .4cm);
\draw (4,-2) arc (180:0:.5cm and .4cm);
\draw (0,-2) -- (0,0);
\draw (1,-2) -- (1,0);
\draw (2,-2) -- (2,-1);
\draw (3,-2) -- (3,-1);

\draw[dashed] (-.5,-2) -- (5.5,-2);
\draw[dashed] (-.5, 22) -- (5.5,22);
\draw[dashed] (-.5,19) -- (5.5,19);
\draw[dashed] (-.5,9) -- (5.5,9);
\draw[dashed] (-.5,6) -- (5.5,6);
\draw[dashed] (-.5,2) -- (5.5,2);

\draw (6.5,20.5) node{\small{$1323$}};
\draw (6.5,14) node{\small{$\zeta$}};
\draw (6.5, 7.5) node{\small{$234$}};
\draw (6.5,4) node{\small{$\eta$}};
\draw (6.5,0) node{\small{$3435$}};

\draw (2,24) node{$D_{6,7}$};

\end{scope}

\end{tikzpicture}$$
\caption{The all-$B$ states of $D_{6,6}$ and $D_{6,7}$.}
\label{figure:B6states1}
\end{figure}
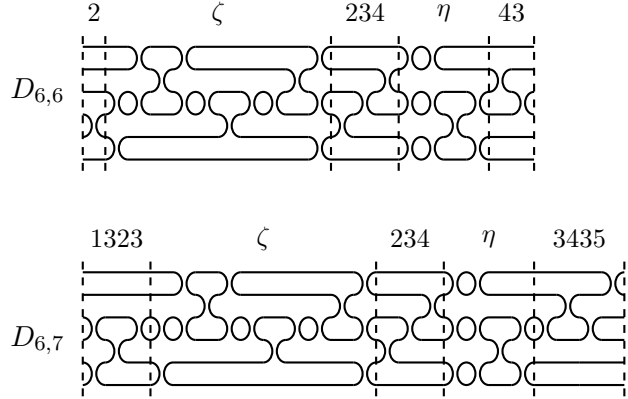

If $n\ge 2$, then $D_{6,6n}$ and $D_{6,6n+1}$ contains at least one $\alpha323$ and $\beta343$ as a sub-word in its braid word. A straightforward computation shows that consecutive $\alpha323$ words add $10$ components to the all-$B$ state and consecutive $\beta343$ words add $8$ components to the all-$B$ state. This observation allows us to compute the number of components in the all-$B$ state of $D_{6,6n}$ and $D_{6,6n+1}$ by only considering braid words with $\alpha323$ and $\beta343$ appearing once. A braid word where $(\alpha323)^{\ell}$ and $(\beta343)^m$ are replaced by $\alpha323$ and $\beta343$ respectively will be called {\em reduced}.

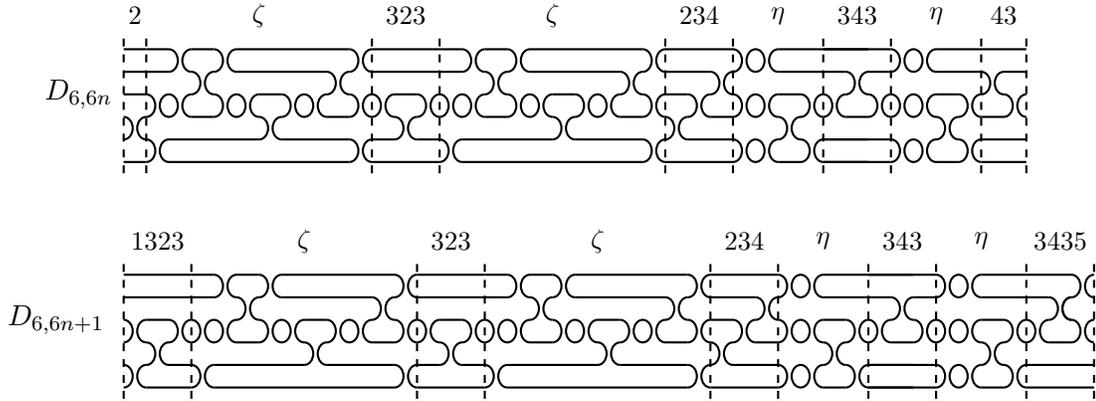
\begin{figure}[h]
$$\begin{tikzpicture}[thick, scale = .3, rotate around={90:(0,0)}]

\draw (2,0) arc (180:0:.5cm and .4cm);
\draw (4,0) -- (4,1);
\draw (3,1) arc (180:0:.5cm and .4cm);
\draw (2,1) arc (180:360:.5cm and .4cm);
\draw (2,1) -- (2,2);
\draw (2,2) arc (180:0:.5cm and .4cm);
\draw (3,2)  arc (180:360:.5cm and .4cm);
\draw (0,0) -- (0,2);
\draw (1,0) -- (1,2);
\draw (0,2)  arc (180:0:.5cm and .4cm);
\draw (0,3) arc (180:360:.5cm and .4cm);
\draw (1,3)  arc (180:0:.5cm and .4cm);
\draw (2,3) arc (180:360:.5cm and .4cm);
\draw (0,3) -- (0,4);
\draw (3,3) -- (3,4);
\draw (0,4) arc (180:0:.5cm and .4cm);
\draw (1,4) arc (180:360:.5cm and .4cm);
\draw (2,4)  arc (180:0:.5cm and .4cm);
\draw (4,4) arc (180:0:.5cm and .4cm);
\draw (4,4) -- (4,2);
\draw (5,4) -- (5,0);
\draw (.5,5) ellipse (.5cm and .4cm);
\draw (2.5,5) ellipse (.5cm and .4cm);
\draw (4.5,5) ellipse (.5cm and .4cm);

\draw (2,6) arc (180:360:.5cm and .4cm);
\draw (4,6) arc (180:360:.5cm and .4cm);
\draw (0,6) arc (180:360:.5cm and .4cm);

\draw (2,6) arc (180:0:.5cm and .4cm);
\draw (2,7) arc (180:360:.5cm and .4cm);
\draw (3,7)  arc (180:0:.5cm and .4cm);
\draw (3,8) arc (180:360:.5cm and .4cm);
\draw (2,8) arc (180:0:.5cm and .4cm);
\draw (2,9) arc (180:360:.5cm and .4cm);
\draw (2,7) -- (2,8);
\draw (0,6) -- (0,9);
\draw (1,6) -- (1,9);
\draw (5,6) -- (5,9);
\draw (4,6) -- (4,7);
\draw (4,8) -- (4,9);
\draw[dashed] (-.5,9) -- (5.5,9);

\draw (6.5,7.5) node{\small{$343$}};

\begin{scope}[yshift=7cm]
\draw (2,2) arc (180:0:.5cm and .4cm);
\draw (0,0) -- (0,2);
\draw (1,0) -- (1,2);
\draw (0,2)  arc (180:0:.5cm and .4cm);
\draw (0,3) arc (180:360:.5cm and .4cm);
\draw (1,3)  arc (180:0:.5cm and .4cm);
\draw (2,3) arc (180:360:.5cm and .4cm);
\draw (0,3) -- (0,4);
\draw (3,3) -- (3,4);
\draw (0,4) arc (180:0:.5cm and .4cm);
\draw (1,4) arc (180:360:.5cm and .4cm);
\draw (2,4)  arc (180:0:.5cm and .4cm);
\draw (4,4) arc (180:0:.5cm and .4cm);
\draw (4,4) -- (4,2);
\draw (5,4) -- (5,0);
\draw (.5,5) ellipse (.5cm and .4cm);
\draw (2.5,5) ellipse (.5cm and .4cm);
\draw (4.5,5) ellipse (.5cm and .4cm);

\draw (2,6) arc (180:360:.5cm and .4cm);
\draw (4,6) arc (180:360:.5cm and .4cm);
\draw (0,6) arc (180:360:.5cm and .4cm);
\draw[dashed] (-.5,6) -- (5.5,6);
\draw (6.5,4) node{\small{$\eta$}};
\end{scope}

\begin{scope}[yshift = 7cm]
\draw (3,6) arc (180:0:.5cm and .4cm);

\draw (2,6) -- (2,7);
\draw (5,6) -- (5,9);
\draw (2,7) arc (180:0:.5cm and .4cm);
\draw (2,8) arc (180:360:.5cm and .4cm);
\draw (1,8)  arc (180:0:.5cm and .4cm);

\draw (1,6)--(1,8);
\draw (3,8) -- (3,9);
\draw (2,9) arc (180:0:.5cm and .4cm);
\draw (1,9) arc (180:360:.5cm and .4cm);
\draw (0,9) arc (180:0:.5cm and .4cm);
\draw (0,6) -- (0,9);
\draw (3,7)  arc (180:360:.5cm and .4cm);
\draw (4,7) -- (4,9);
\draw (4,9) arc (180:0:.5cm and .4cm);

\draw (4,10)  arc (180:360:.5cm and .4cm);
\draw (3,10) arc (180:0:.5cm and .4cm);
\draw (2,10) arc (180:360:.5cm and .4cm);
\draw (2,10) -- (2,11);
\draw (2,11) arc (180:0:.5cm and .4cm);
\draw (3,11) arc (180:360:.5cm and .4cm);
\draw (2.5,12) ellipse(.5cm and .4cm);
\draw (0,10) arc (180:360:.5cm and .4cm);
\draw (1,10) --(1,13);
\draw (1,13)  arc (180:0:.5cm and .4cm);
\draw (2,13) arc (180:360:.5cm and .4cm);
\draw (3,13) -- (3,14);
\draw (2,14) arc (180:0:.5cm and .4cm);
\draw (1,14)  arc (180:360:.5cm and .4cm);
\draw (2.5,15) ellipse(.5cm and .4cm);

\draw (4,11) -- (4,15);
\draw (5,10) -- (5,15);
\draw (4,15) arc (180:0:.5cm and .4cm);

\draw (4,16) arc (180:360:.5cm and .4cm);
\draw (5,17) -- (5,16);
\draw (3,16)  arc (180:0:.5cm and .4cm);
\draw (2,16) arc (180:360:.5cm and .4cm);
\draw (2,17) -- (2,16);
\draw (2,17) arc (180:0:.5cm and .4cm);
\draw (3,17) arc (180:360:.5cm and .4cm);
\draw (4,17) arc (180:0:.5cm and .4cm);
\draw (2.5,18)  ellipse(.5cm and .4cm);
\draw (2,19) arc (180:360:.5cm and .4cm);
\draw (0,19) arc (180:360:.5cm and .4cm);
\draw (0,18) arc (180:0:.5cm and .4cm);
\draw (0,18) -- (0,10);
\draw (1,18) -- (1,14);

\draw (4,19) -- (4,18);
\draw (5,19) -- (5,18);
\draw (4,18) arc (180:360:.5cm and .4cm);

\draw (2,19)  arc (180:0:.5cm and .4cm);

\draw (2,20)  arc (180:360:.5cm and .4cm);
\draw (1,20) arc (180:0:.5cm and .4cm);
\draw (1,21) arc (180:360:.5cm and .4cm);
\draw (2,21) arc (180:0:.5cm and .4cm);
\draw (2,22) arc (180:360:.5cm and .4cm);
\draw (0,19) -- (0,22);
\draw (1,19) -- (1,20);
\draw (1,21) -- (1,22);
\draw (3,20) -- (3,21);
\draw (4,19) -- (4,22);
\draw (5,19) -- (5,22);
\draw[dashed] (-.5,22) -- (5.5,22);
\draw (6.5,20.5) node{\small{$323$}};

\begin{scope}[yshift = 13cm]
\draw (0,9) arc (180:0:.5cm and .4cm);
\draw (2,9) arc (180:0:.5cm and .4cm);

\draw (4,9) arc (180:0:.5cm and .4cm);

\draw (4,10)  arc (180:360:.5cm and .4cm);
\draw (3,10) arc (180:0:.5cm and .4cm);
\draw (2,10) arc (180:360:.5cm and .4cm);
\draw (2,10) -- (2,11);
\draw (2,11) arc (180:0:.5cm and .4cm);
\draw (3,11) arc (180:360:.5cm and .4cm);
\draw (2.5,12) ellipse(.5cm and .4cm);
\draw (0,10) arc (180:360:.5cm and .4cm);
\draw (1,10) --(1,13);
\draw (1,13)  arc (180:0:.5cm and .4cm);
\draw (2,13) arc (180:360:.5cm and .4cm);
\draw (3,13) -- (3,14);
\draw (2,14) arc (180:0:.5cm and .4cm);
\draw (1,14)  arc (180:360:.5cm and .4cm);
\draw (2.5,15) ellipse(.5cm and .4cm);

\draw (4,11) -- (4,15);
\draw (5,10) -- (5,15);
\draw (4,15) arc (180:0:.5cm and .4cm);

\draw (4,16) arc (180:360:.5cm and .4cm);
\draw (5,17) -- (5,16);
\draw (3,16)  arc (180:0:.5cm and .4cm);
\draw (2,16) arc (180:360:.5cm and .4cm);
\draw (2,17) -- (2,16);
\draw (2,17) arc (180:0:.5cm and .4cm);
\draw (3,17) arc (180:360:.5cm and .4cm);
\draw (4,17) arc (180:0:.5cm and .4cm);
\draw (2.5,18)  ellipse(.5cm and .4cm);
\draw (2,19) arc (180:360:.5cm and .4cm);
\draw (0,19) arc (180:360:.5cm and .4cm);
\draw (0,18) arc (180:0:.5cm and .4cm);
\draw (0,18) -- (0,10);
\draw (1,18) -- (1,14);

\draw (4,19) -- (4,18);
\draw (5,19) -- (5,18);
\draw (4,18) arc (180:360:.5cm and .4cm);

\draw[dashed] (-.5,19) -- (5.5,19);
\draw (6.5,14) node{\small{$\zeta$}};

\draw (1,19) arc (180:0:.5cm and .4cm);
\draw (1,20) arc (180:360:.5cm and .4cm);
\draw (0,19) -- (0,20);
\draw (3,19) -- (3,20);
\draw (4,19) -- (4,20);
\draw (5,19) -- (5,20);
\draw (6.5,19.5) node{\small{$2$}};
\draw[dashed] (-.5,20) -- (5.5,20);
\end{scope}

\draw[dashed] (-.5,9) -- (5.5,9);
\draw[dashed] (-.5,19) -- (5.5,19);

\draw (6.5, 7.5) node{\small{$234$}};
\draw (6.5,14) node{\small{$\zeta$}};

\end{scope}

\draw[dashed] (-.5,0) -- (5.5,0);
\draw[dashed] (-.5,2) -- (5.5,2);
\draw[dashed] (-.5,6) -- (5.5,6);
\draw (6.5,1) node{\small{$43$}};
\draw (6.5,4) node{\small{$\eta$}};

\draw (3,42) node{$D_{6,6n}$};


\begin{scope}[xshift = -10cm, yshift = -2cm]

\draw (2,2) arc (180:360:.5cm and .4cm);
\draw (2,1)  arc (180:0:.5cm and .4cm);
\draw (3,1) arc (180:360:.5cm and .4cm);
\draw (3,0) arc (180:0:.5cm and .4cm);
\draw (2,0) arc (180:360:.5cm and .4cm);
\draw (2,-1) arc (180:0:.5cm and .4cm);
\draw (3,0) arc (180:0:.5cm and .4cm);
\draw (4,0) arc (180:360:.5cm and .4cm);
\draw (4,-1) arc (180:0:.5cm and .4cm);
\draw (0,-1) -- (0,2);
\draw (1,-1) -- (1,2);
\draw (2,0) -- (2,1);
\draw (4,2) -- (4,1);
\draw (5,2) -- (5,0);

\draw (2,2) arc (180:0:.5cm and .4cm);
\draw (0,2)  arc (180:0:.5cm and .4cm);
\draw (0,3) arc (180:360:.5cm and .4cm);
\draw (1,3)  arc (180:0:.5cm and .4cm);
\draw (2,3) arc (180:360:.5cm and .4cm);
\draw (0,3) -- (0,4);
\draw (3,3) -- (3,4);
\draw (0,4) arc (180:0:.5cm and .4cm);
\draw (1,4) arc (180:360:.5cm and .4cm);
\draw (2,4)  arc (180:0:.5cm and .4cm);
\draw (4,4) arc (180:0:.5cm and .4cm);
\draw (4,4) -- (4,2);
\draw (5,4) -- (5,2);
\draw (.5,5) ellipse (.5cm and .4cm);
\draw (2.5,5) ellipse (.5cm and .4cm);
\draw (4.5,5) ellipse (.5cm and .4cm);

\draw (2,6) arc (180:360:.5cm and .4cm);
\draw (4,6) arc (180:360:.5cm and .4cm);
\draw (0,6) arc (180:360:.5cm and .4cm);

\draw (2,6) arc (180:0:.5cm and .4cm);
\draw (2,7) arc (180:360:.5cm and .4cm);
\draw (3,7)  arc (180:0:.5cm and .4cm);
\draw (3,8) arc (180:360:.5cm and .4cm);
\draw (2,8) arc (180:0:.5cm and .4cm);
\draw (2,9) arc (180:360:.5cm and .4cm);
\draw (2,7) -- (2,8);
\draw (0,6) -- (0,9);
\draw (1,6) -- (1,9);
\draw (5,6) -- (5,9);
\draw (4,6) -- (4,7);
\draw (4,8) -- (4,9);
\draw[dashed] (-.5,9) -- (5.5,9);

\draw (6.5,7.5) node{\small{$343$}};

\begin{scope}[yshift=7cm]
\draw (2,2) arc (180:0:.5cm and .4cm);
\draw (0,0) -- (0,2);
\draw (1,0) -- (1,2);
\draw (0,2)  arc (180:0:.5cm and .4cm);
\draw (0,3) arc (180:360:.5cm and .4cm);
\draw (1,3)  arc (180:0:.5cm and .4cm);
\draw (2,3) arc (180:360:.5cm and .4cm);
\draw (0,3) -- (0,4);
\draw (3,3) -- (3,4);
\draw (0,4) arc (180:0:.5cm and .4cm);
\draw (1,4) arc (180:360:.5cm and .4cm);
\draw (2,4)  arc (180:0:.5cm and .4cm);
\draw (4,4) arc (180:0:.5cm and .4cm);
\draw (4,4) -- (4,2);
\draw (5,4) -- (5,0);
\draw (.5,5) ellipse (.5cm and .4cm);
\draw (2.5,5) ellipse (.5cm and .4cm);
\draw (4.5,5) ellipse (.5cm and .4cm);

\draw (2,6) arc (180:360:.5cm and .4cm);
\draw (4,6) arc (180:360:.5cm and .4cm);
\draw (0,6) arc (180:360:.5cm and .4cm);
\draw[dashed] (-.5,6) -- (5.5,6);
\draw (6.5,4) node{\small{$\eta$}};
\end{scope}

\begin{scope}[yshift = 7cm]
\draw (3,6) arc (180:0:.5cm and .4cm);

\draw (2,6) -- (2,7);
\draw (5,6) -- (5,9);
\draw (2,7) arc (180:0:.5cm and .4cm);
\draw (2,8) arc (180:360:.5cm and .4cm);
\draw (1,8)  arc (180:0:.5cm and .4cm);

\draw (1,6)--(1,8);
\draw (3,8) -- (3,9);
\draw (2,9) arc (180:0:.5cm and .4cm);
\draw (1,9) arc (180:360:.5cm and .4cm);
\draw (0,9) arc (180:0:.5cm and .4cm);
\draw (0,6) -- (0,9);
\draw (3,7)  arc (180:360:.5cm and .4cm);
\draw (4,7) -- (4,9);
\draw (4,9) arc (180:0:.5cm and .4cm);

\draw (4,10)  arc (180:360:.5cm and .4cm);
\draw (3,10) arc (180:0:.5cm and .4cm);
\draw (2,10) arc (180:360:.5cm and .4cm);
\draw (2,10) -- (2,11);
\draw (2,11) arc (180:0:.5cm and .4cm);
\draw (3,11) arc (180:360:.5cm and .4cm);
\draw (2.5,12) ellipse(.5cm and .4cm);
\draw (0,10) arc (180:360:.5cm and .4cm);
\draw (1,10) --(1,13);
\draw (1,13)  arc (180:0:.5cm and .4cm);
\draw (2,13) arc (180:360:.5cm and .4cm);
\draw (3,13) -- (3,14);
\draw (2,14) arc (180:0:.5cm and .4cm);
\draw (1,14)  arc (180:360:.5cm and .4cm);
\draw (2.5,15) ellipse(.5cm and .4cm);

\draw (4,11) -- (4,15);
\draw (5,10) -- (5,15);
\draw (4,15) arc (180:0:.5cm and .4cm);

\draw (4,16) arc (180:360:.5cm and .4cm);
\draw (5,17) -- (5,16);
\draw (3,16)  arc (180:0:.5cm and .4cm);
\draw (2,16) arc (180:360:.5cm and .4cm);
\draw (2,17) -- (2,16);
\draw (2,17) arc (180:0:.5cm and .4cm);
\draw (3,17) arc (180:360:.5cm and .4cm);
\draw (4,17) arc (180:0:.5cm and .4cm);
\draw (2.5,18)  ellipse(.5cm and .4cm);
\draw (2,19) arc (180:360:.5cm and .4cm);
\draw (0,19) arc (180:360:.5cm and .4cm);
\draw (0,18) arc (180:0:.5cm and .4cm);
\draw (0,18) -- (0,10);
\draw (1,18) -- (1,14);

\draw (4,19) -- (4,18);
\draw (5,19) -- (5,18);
\draw (4,18) arc (180:360:.5cm and .4cm);

\draw (2,19)  arc (180:0:.5cm and .4cm);

\draw (2,20)  arc (180:360:.5cm and .4cm);
\draw (1,20) arc (180:0:.5cm and .4cm);
\draw (1,21) arc (180:360:.5cm and .4cm);
\draw (2,21) arc (180:0:.5cm and .4cm);
\draw (2,22) arc (180:360:.5cm and .4cm);
\draw (0,19) -- (0,22);
\draw (1,19) -- (1,20);
\draw (1,21) -- (1,22);
\draw (3,20) -- (3,21);
\draw (4,19) -- (4,22);
\draw (5,19) -- (5,22);
\draw[dashed] (-.5,22) -- (5.5,22);
\draw (6.5,20.5) node{\small{$323$}};

\begin{scope}[yshift = 13cm]
\draw (0,9) arc (180:0:.5cm and .4cm);
\draw (2,9) arc (180:0:.5cm and .4cm);

\draw (4,9) arc (180:0:.5cm and .4cm);

\draw (4,10)  arc (180:360:.5cm and .4cm);
\draw (3,10) arc (180:0:.5cm and .4cm);
\draw (2,10) arc (180:360:.5cm and .4cm);
\draw (2,10) -- (2,11);
\draw (2,11) arc (180:0:.5cm and .4cm);
\draw (3,11) arc (180:360:.5cm and .4cm);
\draw (2.5,12) ellipse(.5cm and .4cm);
\draw (0,10) arc (180:360:.5cm and .4cm);
\draw (1,10) --(1,13);
\draw (1,13)  arc (180:0:.5cm and .4cm);
\draw (2,13) arc (180:360:.5cm and .4cm);
\draw (3,13) -- (3,14);
\draw (2,14) arc (180:0:.5cm and .4cm);
\draw (1,14)  arc (180:360:.5cm and .4cm);
\draw (2.5,15) ellipse(.5cm and .4cm);

\draw (4,11) -- (4,15);
\draw (5,10) -- (5,15);
\draw (4,15) arc (180:0:.5cm and .4cm);

\draw (4,16) arc (180:360:.5cm and .4cm);
\draw (5,17) -- (5,16);
\draw (3,16)  arc (180:0:.5cm and .4cm);
\draw (2,16) arc (180:360:.5cm and .4cm);
\draw (2,17) -- (2,16);
\draw (2,17) arc (180:0:.5cm and .4cm);
\draw (3,17) arc (180:360:.5cm and .4cm);
\draw (4,17) arc (180:0:.5cm and .4cm);
\draw (2.5,18)  ellipse(.5cm and .4cm);
\draw (2,19) arc (180:360:.5cm and .4cm);
\draw (0,19) arc (180:360:.5cm and .4cm);
\draw (0,18) arc (180:0:.5cm and .4cm);
\draw (0,18) -- (0,10);
\draw (1,18) -- (1,14);

\draw (4,19) -- (4,18);
\draw (5,19) -- (5,18);
\draw (4,18) arc (180:360:.5cm and .4cm);

\draw[dashed] (-.5,19) -- (5.5,19);
\draw (6.5,14) node{\small{$\zeta$}};

\draw (2,19) arc (180:0:.5cm and .4cm);
\draw (2,20)  arc (180:360:.5cm and .4cm);
\draw (1,20)  arc (180:0:.5cm and .4cm);
\draw (1,21) arc (180:360:.5cm and .4cm);
\draw (0,21) arc (180:0:.5cm and .4cm);
\draw (2,21) arc (180:0:.5cm and .4cm);
\draw (0,22) arc (180:360:.5cm and .4cm);
\draw (2,22) arc (180:360:.5cm and .4cm);
\draw (0,19) -- (0,21);
\draw (1,19) -- (1,20);
\draw (3,20) -- (3,21);
\draw (4,19) -- (4,22);
\draw (5,19) -- (5,22);
\draw[dashed] (-.5,22) -- (5.5,22);
\draw (6.5, 20.5) node{\small{$1323$}};
\end{scope}

\draw[dashed] (-.5,9) -- (5.5,9);
\draw[dashed] (-.5,19) -- (5.5,19);

\draw (6.5, 7.5) node{\small{$234$}};
\draw (6.5,14) node{\small{$\zeta$}};

\end{scope}

\draw[dashed] (-.5,-1) -- (5.5,-1);
\draw[dashed] (-.5,2) -- (5.5,2);
\draw[dashed] (-.5,6) -- (5.5,6);
\draw (6.5,.5) node{\small{$3435$}};
\draw (6.5,4) node{\small{$\eta$}};

\draw (3,45) node{$D_{6,6n+1}$};

\end{scope}

\end{tikzpicture}$$

\caption{The reduced all-$B$ states of $D_{6,6n}$ and $D_{6,6n+1}$ for $n\geq 2$.}
\label{figure:B6states2}
\end{figure}

Figure \ref{figure:B6states2} shows the reduced all-$B$ states of $D_{6,6n}$  and $D_{6,6n+1}$ for $n\geq 2$. The reduced all-$B$ state of $D_{6,6n}$ has $32$ components. Hence $s_B(D_{6,6n}) = 32 + 18(n-2) = 18n-4$. Equation \ref{equation:g_T(D)} implies that $g_T(D_{6,6n})=6n$. The reduced all-$B$ state of $D_{6,6n+1}$ has $37$ components. Hence $s_B(D_{6,6n+1}) = 37 + 18(n-2) = 18n+1$.  Equation \ref{equation:g_T(D)} implies that $g_T(D_{6,6n+1})=6n$. 

\end{proof}

\begin{proof}[Proof of Theorem \ref{theorem:TuraevExact}]
Suppose that $n=0$. Since $T_{4,1}$, $T_{5,1}$, and $T_{6,1}$ are all unknots, it follows that their Turaev genera are all zero. Moreover, since $T_{4,3}=T_{3,4}$, we have $g_T(T_{4,3})=1$.

If $n>0$, then the result is implied by Theorem \ref{theorem:HFKWidth} and Propositions \ref{proposition:TuraevT4q}, \ref{proposition:TuraevT5q}, and \ref{proposition:TuraevT6q}.
\end{proof}

Propositions \ref{proposition:TuraevT4q}, \ref{proposition:TuraevT5q}, and \ref{proposition:TuraevT6q} also immediately imply the following theorem.
\begin{theorem}
\label{theorem:TuraevLink}
For $n\geq 1$ and $j=2,3$, and $4$, we have
$$\begin{array}{>{\hfil$}p{5 cm}<{$\hfil} >{\hfil$}p{5 cm}<{$\hfil}}
g_T(T_{4,4n}) \leq  2n, &
g_T(T_{4,4n+2})\leq  2n+1,\\
g_T(T_{5,5n})\leq  4n, &
 g_T(T_{6,6n})\leq  6n.
\end{array}$$
\end{theorem}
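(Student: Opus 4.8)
The plan is to read off the result directly from the three preceding propositions, since each link $T_{p,q}$ appearing in the statement is the closure of one of the braids constructed in Lemmas \ref{lemma:T4q}, \ref{lemma:T5q}, and \ref{lemma:T6q}, and the corresponding diagram $D_{p,q}$ is an honest diagram of that torus link. First I would recall that the Turaev genus of a link is defined as the minimum of $g_T(D)$ over all diagrams $D$ of the link, so that for any single diagram $D$ of a link $L$ one automatically has $g_T(L)\leq g_T(D)$. Applying this with $L=T_{p,q}$ and $D=D_{p,q}$ converts the exact genus computations of Propositions \ref{proposition:TuraevT4q}, \ref{proposition:TuraevT5q}, and \ref{proposition:TuraevT6q} into the desired upper bounds.

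Concretely, I would invoke $g_T(D_{4,4n})=2n$ and $g_T(D_{4,4n+2})=2n+1$ from Proposition \ref{proposition:TuraevT4q}, together with $g_T(D_{5,5n})=4n$ from Proposition \ref{proposition:TuraevT5q} and $g_T(D_{6,6n})=6n$ from Proposition \ref{proposition:TuraevT6q}. Passing to the minimum over all diagrams in each case then yields $g_T(T_{4,4n})\leq 2n$, $g_T(T_{4,4n+2})\leq 2n+1$, $g_T(T_{5,5n})\leq 4n$, and $g_T(T_{6,6n})\leq 6n$. No further input is required, so the proof is a single sentence once the propositions are in hand.

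The reason only upper bounds appear---and the one point worth flagging---is that every link in this statement is a genuine multi-component torus link rather than a knot: indeed $\gcd(4,4n)=4$, $\gcd(4,4n+2)=2$, $\gcd(5,5n)=5$, and $\gcd(6,6n)=6$ for $n\geq 1$. The knot Floer width lower bound of Theorem \ref{theorem:LowerBound}, which supplied the matching lower bounds for the torus knots in Theorem \ref{theorem:TuraevExact}, is stated only for knots and so does not apply here. Thus the genuine work is not in this proof, which is immediate, but rather already lies in the diagram constructions of the earlier lemmas and the all-$B$ state component counts of the three propositions; establishing sharpness for these links would instead require a lower-bound technique valid for links, which is not pursued here.
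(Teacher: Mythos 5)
Your proposal is correct and is exactly the paper's argument: the paper derives Theorem \ref{theorem:TuraevLink} immediately from Propositions \ref{proposition:TuraevT4q}, \ref{proposition:TuraevT5q}, and \ref{proposition:TuraevT6q}, since each $D_{p,q}$ is a diagram of $T_{p,q}$ and $g_T(L)\leq g_T(D)$ for any diagram $D$ of $L$. Your observation that these are multi-component links, so the knot Floer width lower bound of Theorem \ref{theorem:LowerBound} does not apply and only upper bounds are obtained, correctly accounts for the asymmetry with Theorem \ref{theorem:TuraevExact}.
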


The diagrams in Lemmas \ref{lemma:T4q}, \ref{lemma:T5q}, and \ref{lemma:T6q} are the starting points for the proof of Theorem \ref{theorem:DaltBest}.
\begin{proof}[Proof of Theorem \ref{theorem:DaltBest}]
In any of the braid words from Lemmas \ref{lemma:T4q}, \ref{lemma:T5q}, and \ref{lemma:T6q}, changing the even indexed crossings (i.e. $\sigma_2$ in $T_{4,q}$ and $\sigma_2$ and $\sigma_4$ in $T_{5,q}$ and $T_{6,q}$) gives an alternating diagram. This process yields $\dalt(D_{6,6n+1}) = 6n+2$. Thus $6n\leq \dalt(T_{6,6n+1})\leq 6n+2.$ Applying the above process to the $4$ and $5$ stranded torus knots does not lead to the smallest possible dealternating number; we will introduce two tricks to improve the result. 

The first trick that we use is to replace a twist region of even indexed crossings with the same region encircled by one of its incoming strands (as in Figure \ref{figure:wrap}). Let $D_{4,4n+1}'$ and $D_{4,4n+3}'$ be the diagrams $D_{4,4n+1}$ and $D_{4,4n+3}$ with each $\sigma_2$-twist region modified as above. Changing the crossings at the bottom right of each modified twist region and changing the isolated $\sigma_2$ crossing results in an alternating diagram. Thus the dealternating number of $D_{4,4n+1}'$ and $D_{4,4n+3}'$ is the number of isolated $\sigma_2$ crossings plus the number of $\sigma_2$ twist regions, and hence $\dalt(D_{4,4n+1}') = 2n+1$ and $\dalt(D_{4,4n+3}')=2n+3$. Theorem \ref{theorem:HFKWidth} implies that $2n\leq \dalt(T_{4,4n+1}) \leq 2n+1$ and $2n+1 \leq \dalt(T_{4,4n+3})\leq 2n+2$. 

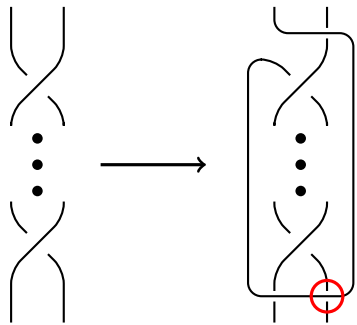
\begin{figure}[h]
$$\begin{tikzpicture}[rounded corners = 1.8mm, thick, scale = .7]
\draw (0,0) -- (0,1) -- (1,2) -- (1,2.3);
\draw (1,0) -- (1,1) -- (.7,1.3);
\draw (.3,1.7) -- (0,2) -- (0,2.3);

\fill (.5,2.5) circle (.1cm);
\fill (.5,3) circle (.1cm);
\fill (.5,3.5) circle (.1cm);

\draw (0,3.8) -- (0,4) -- (1,5) -- (1,6);
\draw (1,3.8) -- (1,4) -- (.7,4.3);
\draw (.3,4.7) -- (0,5) -- (0,6);

\begin{scope} [xshift = 5cm]
\draw (0,6) -- (0,5.5) -- (1.5,5.5) -- (1.5,.5) -- (-.5,.5) -- (-.5,5) -- (0,5) -- (0.3,4.7);
\draw (1,6) -- (1,5.6);
\draw (1,5.4) -- (1,5) -- (0,4) -- (0,3.8);
\draw (1,3.8) -- (1,4) -- (.7,4.3);
\fill (.5,2.5) circle (.1cm);
\fill (.5,3) circle (.1cm);
\fill (.5,3.5) circle (.1cm);

\draw (0,0) -- (0,.4);
\draw (0,.6) -- (0,1) -- (1,2) -- (1,2.3);
\draw (1,0) -- (1,.4);
\draw (1,.6)  -- (1,1) -- (.7,1.3);
\draw (.3,1.7) -- (0,2) -- (0,2.3);

\draw[very thick, red] (1,.5) circle (.3cm);

\end{scope}

\draw[very thick, ->] (1.7,3) -- (3.7,3);

\end{tikzpicture}$$
\caption{Changing the circled crossing on the lower right makes this region alternating.}
\label{figure:wrap}
\end{figure}

The second trick we use is wrap a strand of the link between the $\sigma_3$ and $\sigma_4$ crossings in the diagram $D_{5,5n+1}$ to obtain the diagram $D_{5,5n+1}'$, as in Figure \ref{figure:dalt5}. The diagram $D_{5,5n+1}$ has $n+1$ $\sigma_4$-crossings. The strand between the $\sigma_3$ and $\sigma_4$ crossings has $2n+1$ crossings. Changing $n$ of those crossings results in an alternating diagram. Thus the dealternating number of $D_{5,5n+1}'$ is $n$ plus the number of $\sigma_2$ crossings, yielding $\dalt(D_{5,5n+1}')=4n+1$. Theorem \ref{theorem:HFKWidth} implies that $4n \leq \dalt(T_{5,5n+1}) \leq 4n+1$. The same strategy yields the result for the remaining torus knots on $5$ strands.
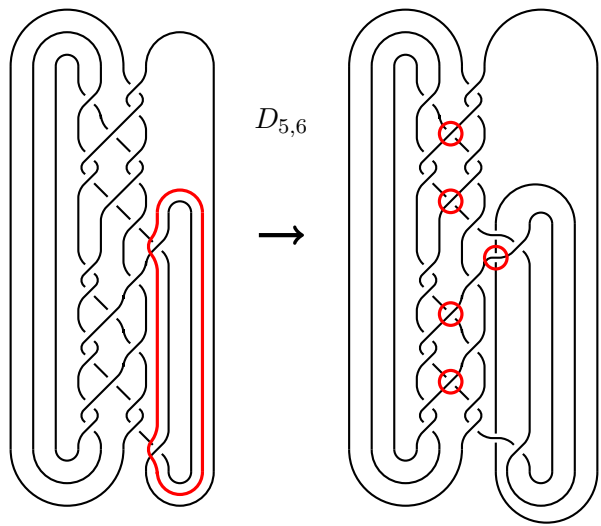
\begin{figure}[h]
 $$\begin{tikzpicture}[thick, scale = .3]
          
          \draw (-6,14) node{$D_{5,6}$};
          
          \draw[->, ultra thick] (-7,9) -- (-5,9);

\begin{scope}[rounded corners = 1mm]

\draw (0,-.5) -- (0,0) -- (1,1) -- (.7,1.3);
\draw (.3,1.7) -- (0,2) -- (0,3) -- (1,4) -- (.7,4.3);
\draw (.3,4.7) -- (0,5) -- (0,6) -- (1,7) -- (.7,7.3);
\draw (.3,7.7) -- (0,8) -- (0,11) -- (1,12) -- (.7,12.3);
\draw (.3,12.7) -- (0,13) -- (0,14.5) -- (1,15.5) -- (1,16.5) ;

\draw (1,-.5) -- (1,0) -- (.7,.3);
\draw (.3,.7) -- (0,1) -- (2,3) -- (2,3.5) -- (3,4.5) -- (3,6.5) -- (2.7,6.8);
\draw (2.3,7.2) -- (2,7.5) -- (2,9) -- (3,10) -- (3,11) -- (2.7,11.3);
\draw (2.3,11.7) -- (2,12) -- (3,13) -- (3,14) -- (2.7,14.3);
\draw (2.3,14.7) -- (2,15) -- (3,16) -- (3,16.5);

\draw (2,-.5) -- (2,0) -- (3,1) -- (2.7,1.3);
\draw (2.3,1.7) -- (1.7,2.3);
\draw (1.3,2.7) -- (.7,3.3);
\draw (.3,3.7) -- (0,4) -- (2,6) -- (2,6.5) -- (3,7.5) -- (3,8) -- (4,8) -- (5,9) -- (5,9.5);

\draw (2.7,.3) -- (3,0) -- (4,0) -- (4.3,-.3);
\draw (4.7,-.7) -- (5,-1) -- (5,-1.5);
\draw (2.3,.7) -- (2,1) -- (3,2) -- (3,3.5) -- (2.7,3.8);
\draw (2.3,4.2) -- (2,4.5) -- (2,5) -- (1.7,5.3);
\draw (1.3,5.7) -- (.7,6.3);
\draw (.3,6.7) -- (0,7) -- (1,8) -- (1,10) -- (3,12) -- (2.7,12.3);
\draw (2.3,12.7) -- (1.7,13.3);
\draw (1.3,13.7) -- (1,14) -- (1,14.5) -- (.7,14.8);
\draw (.3,15.2) -- (0,15.5) -- (0,16.5);

\draw (2,16.5) --  (2,16) -- (2.3,15.7);
\draw (2.7, 15.3) -- (3,15) -- (0,12) -- (.3,11.7);
\draw (.7,11.3) -- (1.3,10.7);
\draw (1.7,10.3) -- (2.3,9.7); 
\draw (2.7,9.3) -- (3,9) -- (4,9) -- (4.3,8.7);
\draw (4.7,8.3) -- (5,8) -- (5,0) -- (4,-1) -- (4,-1.5);

\draw (5,-1.5) arc (-180:0:.5cm);
\draw (5,9.5) arc (180:0:.5cm);
\draw (6,-1.5) -- (6,9.5);

\draw (4,-1.5) arc (-180:0:1.5cm);
\draw (7,-1.5) -- (7,9.5);
\draw (7,9.5) arc (0:180:1.75cm);

\draw (3.5,9.5) -- (3.5,9.2);
\draw (3.5, 8.8) -- (3.5,8.2);
\draw (3.5,7.8) -- (3.5,0.2);
\draw (3.5,-.2) -- (3.5,-1.5);

\draw (0,-.5) arc (0:-180:.5cm);
\draw (1,-.5) arc (0:-180:1.5cm);
\draw (2,-.5) arc (0:-180:2.5cm);

\draw (0,16.5) arc (0:180:.5cm);
\draw (1,16.5) arc (0:180:1.5cm);
\draw (2,16.5) arc (0:180:2.5cm);

\draw (-1,16.5) -- (-1,-.5);
\draw (-2,16.5) -- (-2,-.5);
\draw (-3,16.5) -- (-3,-.5);

\draw (3.5,-1.5) arc (-180:0:2.25cm);
\draw (8,-1.5) -- (8,16.5);

\draw (3,16.5) arc (180:0:2.5cm);

\end{scope}

\draw[red, very thick] (1.5,2.5) circle (.5cm);
\draw[red, very thick] (1.5,5.5) circle (.5cm);
\draw[red, very thick] (1.5,10.5) circle (.5cm);
\draw[red, very thick] (1.5,13.5) circle (.5cm);
\draw[red, very thick] (3.5, 8) circle (.5cm);

\begin{scope}[xshift = -15cm]
\begin{scope}[rounded corners = 1mm]

\draw (0,-.5) -- (0,0) -- (1,1) -- (.7,1.3);
\draw (.3,1.7) -- (0,2) -- (0,3) -- (1,4) -- (.7,4.3);
\draw (.3,4.7) -- (0,5) -- (0,6) -- (1,7) -- (.7,7.3);
\draw (.3,7.7) -- (0,8) -- (0,11) -- (1,12) -- (.7,12.3);
\draw (.3,12.7) -- (0,13) -- (0,14.5) -- (1,15.5) -- (1,16.5) ;

\draw (1,-.5) -- (1,0) -- (.7,.3);
\draw (.3,.7) -- (0,1) -- (2,3) -- (2,3.5) -- (3,4.5) -- (3,6.5) -- (2.7,6.8);
\draw (2.3,7.2) -- (2,7.5) -- (2,9) -- (3,10) -- (3,11) -- (2.7,11.3);
\draw (2.3,11.7) -- (2,12) -- (3,13) -- (3,14) -- (2.7,14.3);
\draw (2.3,14.7) -- (2,15) -- (3,16) -- (3,16.5);

\draw (2,-.5) -- (2,0) -- (3,1) -- (2.7,1.3);
\draw (2.3,1.7) -- (1.7,2.3);
\draw (1.3,2.7) -- (.7,3.3);
\draw (.3,3.7) -- (0,4) -- (2,6) -- (2,6.5) -- (3,7.5) -- (3,8) -- (4,9) -- (4,10);

\draw (2.7,.3) -- (3.3,-.3);
\draw (3.7,-.7) -- (4,-1) -- (4,-1.5);
\draw (2.3,.7) -- (2,1) -- (3,2) -- (3,3.5) -- (2.7,3.8);
\draw (2.3,4.2) -- (2,4.5) -- (2,5) -- (1.7,5.3);
\draw (1.3,5.7) -- (.7,6.3);
\draw (.3,6.7) -- (0,7) -- (1,8) -- (1,10) -- (3,12) -- (2.7,12.3);
\draw (2.3,12.7) -- (1.7,13.3);
\draw (1.3,13.7) -- (1,14) -- (1,14.5) -- (.7,14.8);
\draw (.3,15.2) -- (0,15.5) -- (0,16.5);

\draw (2,16.5) --  (2,16) -- (2.3,15.7);
\draw (2.7, 15.3) -- (3,15) -- (0,12) -- (.3,11.7);
\draw (.7,11.3) -- (1.3,10.7);
\draw (1.7,10.3) -- (2.3,9.7); 
\draw (2.7,9.3) -- (3.3,8.7);
\draw (3.7,8.3) -- (4,8) -- (4,0) -- (3,-1) -- (3,-1.5);

\draw (0,-.5) arc (0:-180:.5cm);
\draw (1,-.5) arc (0:-180:1.5cm);
\draw (2,-.5) arc (0:-180:2.5cm);

\draw (0,16.5) arc (0:180:.5cm);
\draw (1,16.5) arc (0:180:1.5cm);
\draw (2,16.5) arc (0:180:2.5cm);

\draw (-1,16.5) -- (-1,-.5);
\draw (-2,16.5) -- (-2,-.5);
\draw (-3,16.5) -- (-3,-.5);

\draw (4,-1.5) arc (-180:0:.5cm);
\draw (5,10) arc (0:180:.5cm);
\draw (5,-1.5) -- (5,10);

\draw (3,16.5) arc (180:0:1.5cm);
\draw (3,-1.5) arc (-180:0:1.5cm);
\draw (6,-1.5) -- (6,16.5);

\begin{scope}[red, very thick]
\draw (3.5,10) arc (180:0:1cm);
\draw (3.5,-1.5) arc (-180:0:1cm);
\draw (5.5,10) -- (5.5,-1.5);

\draw (3.5,10) -- (3.5,9.25) -- (3,8.5) -- (3.5,7.75) -- (3.5,0.25) -- (3,-.5) -- (3.5,-1.25) -- (3.5,-1.5);

\end{scope}

\end{scope}
\end{scope}

\end{tikzpicture}$$
\caption{Wrapping the strand around the $\sigma_4$ crossings decreases the dealternating number of the diagram by one.}
\label{figure:dalt5}
\end{figure}
\end{proof}

The methods of the above proof can be used to prove the following result. The details are omitted.
\begin{theorem}
For $n\geq 1$, we have
$$\begin{array}{>{\hfil$}p{5 cm}<{$\hfil} >{\hfil$}p{5 cm}<{$\hfil}}
\dalt(T_{4,4n}) \leq  2n+1,&
\dalt(T_{4,4n+2}) \leq 2n+2,\\
\dalt(T_{5,5n}) \leq  4n+1,&
\dalt(T_{6,6n})\leq 6n+2.\\
\end{array}$$
\end{theorem}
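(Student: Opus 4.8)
The plan is to follow the proof of Theorem~\ref{theorem:DaltBest} essentially verbatim, now starting from the link diagrams $D_{4,4n}$, $D_{4,4n+2}$, $D_{5,5n}$, and $D_{6,6n}$ produced by Lemmas~\ref{lemma:T4q}, \ref{lemma:T5q}, and~\ref{lemma:T6q}. As recorded at the start of Section~\ref{section:diagrams}, changing every even-indexed crossing ($\sigma_2$ on four strands, $\sigma_2$ and $\sigma_4$ on five and six strands) of such a positive braid diagram yields an alternating diagram, so a first, non-optimal bound on $\dalt(D_{p,pn})$ is simply the number of even-indexed crossings. The work is then to shave this count down to the stated values using the two tricks of Figures~\ref{figure:wrap} and~\ref{figure:dalt5}, the resulting bounds being exactly the lower bounds $g_T+1$ furnished by Theorem~\ref{theorem:HFKWidth} together with Propositions~\ref{proposition:TuraevT4q}--\ref{proposition:TuraevT6q}.

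For the four-stranded links I would first observe that concatenating the blocks of $(2132)^{2n}$ fuses adjacent $\sigma_2$'s: the identity $(2132)^{2n}=213\,(2213)^{2n-1}\,2$ (checked against $(2132)^2=21322132$) exhibits $2n-1$ twist regions $\sigma_2^2$ together with the two isolated $\sigma_2$'s at the ends. Applying the encircling move of Figure~\ref{figure:wrap} to each twist region, one crossing change per twist region and one per isolated $\sigma_2$ suffices to alternate the diagram, giving $\dalt(D_{4,4n})\le(2n-1)+2=2n+1$. The same bookkeeping applied to $(2132)^{2n+1}=213\,(2213)^{2n}\,2$ yields $2n$ twist regions and two isolated $\sigma_2$'s, hence $\dalt(D_{4,4n+2})\le 2n+2$.

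For $D_{5,5n}$ I would reuse the wrapping move of Figure~\ref{figure:dalt5}: the $n+1$ crossings labelled $\sigma_4$ all lie along a single strand running between the $\sigma_3$ and $\sigma_4$ columns, and wrapping that strand converts the $n+1$ changes one would naively make at those crossings into only $n$ changes (switching $n$ of the $2n+1$ crossings on the wrapped strand). Counting the $\sigma_2$-crossings of $D_{5,5n}$ directly---there are $3n+1$ of them, each isolated and each needing one change---gives $\dalt(D_{5,5n})\le 3n+1+n=4n+1$. The six-stranded link is the easiest case: a direct count of the even-indexed crossings of $2(\zeta323)^{n-1}\zeta234(\eta343)^{n-1}\eta43$ gives exactly $8+6(n-1)=6n+2$, so changing all of them already realizes $\dalt(D_{6,6n})\le 6n+2$ with no trick required, exactly as for $D_{6,6n+1}$ in Theorem~\ref{theorem:DaltBest}.

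The main obstacle is geometric rather than combinatorial: although the crossing counts fall out of the braid words immediately, one must verify that each trick genuinely produces an alternating diagram after the prescribed changes, now in the presence of several link components. For the encircling move this is local and essentially identical to the knot case, so the only thing to confirm there is that the rewriting $(2132)^{2n}=213(2213)^{2n-1}2$ and its $(2n+1)$-fold analogue really do present the $\sigma_2$-crossings as independent planar twist regions. The genuinely delicate step is the wrapping move for $D_{5,5n}$, where I would need to check that the strand threaded between the $\sigma_3$ and $\sigma_4$ columns is positioned as in Figure~\ref{figure:dalt5} and that exactly $n$ of its crossings must be switched to alternate the wrapped region; this is precisely the content that the ``details are omitted'' remark conceals, and it is where the argument must be carried out carefully for the multi-component diagram rather than merely transcribed from the knot case.
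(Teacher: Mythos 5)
Your proposal is correct and follows exactly the route the paper intends (the paper omits the details, saying only that the methods of Theorem~\ref{theorem:DaltBest} apply): your crossing counts --- $2n-1$ (resp.\ $2n$) $\sigma_2^2$ twist regions plus two isolated $\sigma_2$'s for $D_{4,4n}$ (resp.\ $D_{4,4n+2}$), $3n+1$ $\sigma_2$'s and $n+1$ $\sigma_4$'s for $D_{5,5n}$, and $6n+2$ even-indexed crossings for $D_{6,6n}$ --- all check against the braid words of Lemmas~\ref{lemma:T4q}--\ref{lemma:T6q}. The one point you rightly flag, the wrapping move for $D_{5,5n}$, goes through because the $\sigma_4$'s occur in the same sub-words ($234$, each $\beta$, and the terminal $43$) as in $D_{5,5n+1}$, so the configuration of Figure~\ref{figure:dalt5} carries over unchanged.
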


As Theorem \ref{theorem:TuraevExact} implies, we were able to find Turaev genus minimizing diagrams for many but not all of the torus knots on six or fewer strands. In the cases where we did not compute the Turaev genus exactly, there are several possibilities. The Turaev genus of these knots could be strictly greater than the lower bound given by knot Floer homology. Alternatively, since our search of diagrams only considered the closures of positive braids, it is possible that the Turaev genus minimizing diagrams of our unsolved cases include negative generators $\sigma_i^{-1}$ or possibly are not even closures of braids. It remains an interesting question to compute the Turaev genus or dealternating numbers of an arbitrary torus knot $T_{p,q}$.

\bibliography{BibTuraevTorus}{}
\bibliographystyle {amsalpha}

\end{document}